\documentclass[11pt,a4paper,oneside,leqno]{amsart}
\usepackage[utf8]{inputenc}
\usepackage[english]{babel}
\usepackage{amsmath}
\usepackage{amsfonts}
\usepackage{amssymb}
\usepackage{amsthm}
\usepackage{color}
\usepackage{graphicx}




\usepackage{enumitem}
\reversemarginpar

\setlength{\textwidth}{6.17in}
\setlength{\textheight}{9.5in}


\newcommand{\marginp}[1]{}

\newcommand{\comm}[1]{}

\voffset = -50pt


\newtheorem{thm}{Theorem}[section]
\newtheorem{lemma}[thm]{Lemma}
\newtheorem{prop}[thm]{Proposition}
\newtheorem{cor}[thm]{Corollary}

\newtheorem*{thmA}{Theorem A}
\newtheorem*{thmB}{Theorem B}

\newtheorem*{thms}{Theorem}

\theoremstyle{definition}

\newtheorem{defi}[thm]{Definition}
\newtheorem{rem}[thm]{Remark}

\newtheorem*{notas}{Note}
\newtheorem*{defis}{Definition}

\newcommand{\N}{\mathbb{N}}
\newcommand{\Z}{\mathbb{Z}}
\newcommand{\R}{\mathbb{R}}
\newcommand{\C}{\mathbb{C}}
\newcommand{\cC}{\mathcal{C}}
\newcommand{\D}{\mathbb{D}}
\newcommand{\cH}{\mathcal{H}}

\newcommand{\T}{\mathbb{T}}

\newcommand{\wT}{\widetilde{T}}
\newcommand{\wH}{\widetilde{H}}
\newcommand{\hh}{\mathbf{h}}
\newcommand{\bfg}{\mathbf{g}}

\newcommand{\al}{\alpha}
\newcommand{\f}{\alpha}

\newcommand{\abs}[1]{| #1 |}
\newcommand{\Abs}[1]{\left| #1 \right|}
\newcommand{\norm}[1]{\| #1 \|}

\newcommand{\Norm}[1]{{\vert\kern-0.25ex\vert\kern-0.25ex\vert #1 \vert\kern-0.25ex\vert\kern-0.25ex\vert}}

\newcommand{\tn}[1]{\textnormal{#1}}
\newcommand{\wt}[1]{\widetilde{#1}}
\newcommand{\ol}[1]{\overline{#1}}
\renewcommand{\ss}{\subset}

\newcommand{\be}{\beta}
\newcommand{\ga}{\gamma}
\newcommand{\om}{\omega}
\newcommand{\la}{\lambda}
\newcommand{\ep}{\varepsilon}

\newcommand{\vp}{\varphi}

\renewcommand{\epsilon}{\varepsilon}
\renewcommand{\Re}{\operatorname{Re}}

\makeatletter
\newcommand*\bigcdot{\mathpalette\bigcdot@{.5}}
\newcommand*\bigcdot@[2]{\mathbin{\vcenter{\hbox{\scalebox{#2}{$\m@th#1\bullet$}}}}}
\makeatother

\newcommand{\gs}{good weight}

\date{\today}
\title{Operator inequalities implying similarity to a contraction}

\author{Glenier Bello-Burguet}
\address{Glenier Bello-Burguet\newline
Instituto de Ciencias Matem\'aticas (CSIC-UAM-UC3M-UCM), \newline
C/ Nicol\'as Cabrera, n${}^o$ 13-15 Campus de Cantoblanco, UAM, 28049 Madrid, Spain}

\email{gl.bello@icmat.es}
\author{Dmitry Yakubovich}
\address{D. V. Yakubovich\newline
Departamento de Matem\'aticas,\newline
Universidad Aut\'onoma de Madrid,\newline
Cantoblanco, 28049 Madrid, Spain\newline
and Instituto de Ciencias Matem\'aticas (CSIC-UAM-UC3M-UCM)}
\email {dmitry.yakubovich@uam.es}
\thanks{First author acknowledge the Grant Severo-Ochoa La Caixa for undergraduate studies.
Both authors are partially supported by Plan Nacional  I+D grant no. MTM2015-66157-C2-1-P.
The authors also acknowledge financial support from the Spanish Ministry of Economy and Competitiveness,
through the ``Severo Ochoa Programme for Centres of Excellence in R$\&$D'' (SEV-2015-0554).
}

\begin{document}

\begin{abstract}
Let $T$ be a bounded linear operator on a Hilbert space $H$ such that
\[
\alpha[T^*,T]:=\sum_{n=0}^\infty \alpha_n T^{*n}T^n\ge 0.
\]
where $\alpha(t)=\sum_{n=0}^\infty \alpha_n t^n$
is a suitable analytic function in the unit disc $\D$ with real coefficients.
We prove that if
$\alpha(t) = (1-t) \tilde{\alpha} (t)$, where
$\tilde{\alpha}$ has no roots in $[0,1]$, then $T$ is similar to a contraction.

Operators of this
type have been investigated by Agler, M\"uller, Olofsson, Pott and others, however,
we treat cases where their techniques do not
apply.
%

We write down an explicit Nagy-Foias type model
of an operator in this class and discuss
its usual consequences (completeness of eigenfunctions, similarity to a normal operator, etc.).
We also show that the limits of $\|T^nh\|$ as $n\to\infty$, $h\in H$, do  not exist
in general, but do exist if an additional assumption on $\al$ is imposed.

Our approach is based on a factorization lemma for certain weighted $\ell^1$
Banach algebras.
\end{abstract}

\maketitle


\section{Introduction}


Let $H$ be a separable complex Hilbert space and denote by $L(H)$
the set of bounded linear operators on $H$. Let $T \in L(H)$ and let
\[
\f(t)=\sum_{n=0}^\infty \f_n t^n
\]
be an analytic function on the open unit disc $\D=\{z: |z|<1\}$ such
that
$\f_n$ are real and
$\sum_n |\f_n|\|T^n\|^2<\infty$.
Then we define the so-called hereditary calculus
\[
\f[T^*,T] := \sum_{n=0}^{\infty} \f_n T^{*n} T^n.
\]
This work is devoted to the study of operators  $T \in L(H)$ that satisfy an operator inequality
\begin{equation}
\label{eq. f geq 0}
\f[T^*,T] \geq 0.
\end{equation}
Notice that for $\f(t) = 1-t$, this is just the class of all contractions on $H$.

The study of operator inequalities of this type was originated in the work
by Agler~\cite{Agl82}, where he studied more general inequalities
of the form $\sum_{j,k} \f_{jk} T^{*j} T^k \ge 0$.
Suppose that
$k(w,z)=1/\big(\sum_{j,k} \f_{jk} w^j z^k\big)$ is analytic in $\D\times \D$ and
$k(\bar w,z)$ is a reproducing kernel that defines a functional Hilbert space $\cH_k(\D)$
of functions on $\D$.
Let $M$ be the operator $Mu(z)=zu(z)$, acting on $\cH_k(\D)$, and assume that it is bounded.
Agler's main result in~\cite{Agl82} asserts that
an operator $T$ whose spectrum $\sigma(T)$ is contained in $\D$
satisfies the above inequality if and only if it is unitarily equivalent to the operator
$\bigoplus_{j=1}^\infty M^*$ restricted to an invariant subspace.
This is what Agler calls a coanalytic model of $T$.

The condition $\sigma(T)\subset \D$ is too restrictive; it has been shown in subsequent
papers that in many cases it can be replaced by $\sigma(T)\subset \overline{\D}$.
Then, in general, instead of the operator
$\bigoplus_{j=1}^\infty M^*$, the above coanalytic model involves a direct sum
$\bigoplus_{j=1}^\infty M^*\oplus U$, where $U$ is a unitary.
For instance, in \cite{Agl85}, Agler proved that an operator $T$
is a hypercontraction of order $n$ (that is, it satisfies \eqref{eq. f geq 0}  for
$\f(t) = (1-t)^k$, $k = 1, \ldots , n$)
iff it can be extended
to an operator $\bigoplus_{j=1}^\infty M^*\oplus U$, where
$M$ acts on the Bergman space, whose reproducing kernel is $1/(1-\bar wz)^n$.
If $T$ is of class $C_{0\, \bigcdot}$ (that is, $T^n\to 0$ strongly),
the above unitary summand is absent.


The case where $\f$ is a polynomial, say $\f=p$, was studied further by
M\"uller in \cite{Mul88}. He considers the class $C(p)$ of
operators $T \in L(H)$ such that \eqref{eq. f geq 0} is satisfied
and proves that $T$ has a coanalytic model whenever
$p(1)=0$, $1/p(t)$ is analytic in $\D$ and $1/p(\bar w z)$ is a reproducing kernel.
Notice that the last condition is equivalent to the fact that
all Taylor coefficients of $1/p(t)$ at the origin are positive.

In \cite{Olo15}, Olofsson deals with a more general setting, when
$\al$ is not a polynomial. Suppose an analytic function $\al(t)$ on $\D$ satisfies
$\al\ne 0$ in $\D$ and $1/\al$ has positive Taylor coefficients at the origin.
Olofsson studies contractions $T$ on $H$
that satisfy $\al[rT^*,rT]\ge 0$ for all $r$, $0\le r<1$
(he imposes some more assumptions on $\al$).
He obtains the coanalytic model for this class of operators.

%

Certain types of operator inequalities like \eqref{eq. f geq 0}
have also been studied for commuting tuples of operators in
\cite{AEM02}, \cite{Pott99}, \cite{BS06} and other papers. Pott in
\cite{Pott99} considered \emph{positive regular polynomials}.
These are polynomials of several complex variables with
non-negative coefficients such that the constant term is $0$ and
the coefficients of the linear terms are positive. Given such
polynomial $p$, Pott constructed a dilation model for commuting
tuples of operators satisfying the positivity conditions
$(1-p)^k[T^*,T] \geq 0$ for $1 \leq k \leq m$ (see Theorem 3.8 in
\cite{Pott99}). In \cite{BS06}, Bhattacharyya and Sarkar define
the characteristic function $\theta_T$ for this class of tuples
and construct a functional model in the pure case (see Theorem 4.2
in \cite{BS06}).

A general framework of Agler's theory in case of
operator inequalities for commuting tuples of operators has been
given in~\cite{AEM02} and further generalized in~\cite{AE03}; the latter work treats
general analytic models, which involve multi-dimensional analogues of operators
$\bigoplus_{j=1}^\infty M^*\oplus U$ attached to a domain in $\C^n$.

\medskip

Here we restrict ourselves to a single operator, but
for this case, we can deal with
a large class of operator inequalities,
\marginp{!!! Mira, por favor,esta frase}
for which the original Agler's approach does not seem to apply.

%
%

In what follows, we will say that an analytic function $\al(t)$ is
\textit{admissible} if it has the form $\al(t)=(1-t)\wt{\al}(t)$, where
$\sum_{n=0}^{\infty} \abs{\wt{\al}_n}<\infty$ and
$\wt{\al}$ is positive on $[0,1]$ (in particular, $\al_0>0$).

One of our main results is as follows.


\begin{thm}\label{T1 B 27 GL}
Let $T \in L(H)$ be an operator whose spectrum is contained in the
closed unit disc $\overline{\D}$. Let $\al(t)=(1-t)\wt{\al}(t)$ be
an admissible function such that $\sum_{n=0}^{\infty}
\abs{\wt{\al}_n} (1+\norm{T^n}^2) < \infty$. If $\al[T^*,T] \geq
0$, then $T$ is similar to a contraction.
\end{thm}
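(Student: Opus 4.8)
The plan is to produce a positive operator $G \in L(H)$ that is bounded above and below and satisfies $G - T^*GT \geq 0$; once this is done, writing $G = S^2$ with $S = G^{1/2}$ bounded, self-adjoint and boundedly invertible gives
\[
\norm{STS^{-1}x}^2 = \langle T^*GTS^{-1}x,\, S^{-1}x\rangle \leq \langle GS^{-1}x,\, S^{-1}x\rangle = \norm{x}^2,
\]
so that $STS^{-1}$ is a contraction and $T = S^{-1}(STS^{-1})S$ is similar to it. The natural candidate is
\[
G := \wt{\al}[T^*,T] = \sum_{n=0}^\infty \wt{\al}_n\, T^{*n}T^n,
\]
which converges in norm with $\norm{G} \leq \sum_n \abs{\wt{\al}_n}\norm{T^n}^2 < \infty$ by hypothesis; this is the upper bound. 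For $G - T^*GT \geq 0$ I would exploit the factor $(1-t)$ through a telescoping identity: since $\al_0 = \wt{\al}_0$ and $\al_n = \wt{\al}_n - \wt{\al}_{n-1}$ for $n\geq 1$,
\[
G - T^*GT = \sum_{n=0}^\infty \wt{\al}_n T^{*n}T^n - \sum_{n=0}^\infty \wt{\al}_n T^{*(n+1)}T^{n+1} = \sum_{n=0}^\infty \al_n T^{*n}T^n = \al[T^*,T] \geq 0.
\]
Equivalently, introducing the positive linear map $\tau(X) = T^*XT$ on $L(H)$, one has $G = \wt{\al}(\tau)I$ and $G - T^*GT = (I-\tau)\wt{\al}(\tau)I = \al(\tau)I$.

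The heart of the argument, and the step I expect to be the main obstacle, is the lower bound $G \geq cI$ for some $c>0$. For \emph{normal} $T$ this is immediate: then $T^{*n}T^n = (T^*T)^n$, the operator $\abs{T}^2$ has spectrum in $[0,1]$ because $\sigma(T) \ss \ol{\D}$, and $G = \wt{\al}(\abs{T}^2) \geq (\min_{[0,1]}\wt{\al})\, I > 0$ by the spectral theorem and positivity of $\wt{\al}$ on $[0,1]$. For a general, possibly non-normal $T$ this breaks down, since $T^{*n}T^n \neq (T^*T)^n$ and no single-operator functional calculus is available; this is precisely the regime in which Agler's approach does not apply. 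To handle it I would first test on a unit vector: put $s_n = \norm{T^nh}^2 \geq 0$, so $s_0 = 1$ and
\[
\langle Gh, h\rangle = \sum_{n=0}^\infty \wt{\al}_n s_n =: S(0), \qquad S(k) := \sum_{n=0}^\infty \wt{\al}_n s_{n+k}.
\]
Applying $\al[T^*,T]\geq 0$ to $T^kh$ gives $\sum_n \al_n s_{n+k} = S(k) - S(k+1) \geq 0$, so $S(k)$ is non-increasing, while the convergence hypothesis $\sum_n \abs{\wt{\al}_n}(1+\norm{T^n}^2)<\infty$ together with $\sigma(T)\ss\ol{\D}$ controls the decay of $(s_n)$. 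The remaining task is to bound $S(0)$ below by a constant $c>0$ independent of $h$, using only positivity of $\wt{\al}$ on $[0,1]$. This is where I would invoke the factorization lemma for the weighted $\ell^1$ algebra $A_w$ with submultiplicative weight $w_n = 1 + \norm{T^n}^2$ (submultiplicativity follows from $\norm{T^{m+n}}^2 \leq \norm{T^m}^2\norm{T^n}^2$): since $r(T)\leq 1$ forces $\lim_n w_n^{1/n} = 1$, the Gelfand-relevant set is $\ol{\D}$, and the lemma converts positivity of $\wt{\al}$ on $[0,1]$ into the sought uniform lower bound for the diagonal calculus $\sum_n \wt{\al}_n s_n$. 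Proving and correctly applying this factorization, rather than the surrounding manipulations, is the real work.

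With both bounds in hand, $cI \leq G \leq \norm{G}\,I$ shows that $G$ is positive and invertible, so $S := G^{1/2}$ is bounded, self-adjoint and boundedly invertible. Combining $G - T^*GT \geq 0$, i.e. $T^*S^2T \leq S^2$, with the computation of the first paragraph yields $\norm{STS^{-1}} \leq 1$, and therefore $T$ is similar to the contraction $STS^{-1}$, as claimed.
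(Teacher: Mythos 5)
Your reduction is sound in outline: finding $G$ with $cI \le G \le CI$ and $G - T^*GT \ge 0$ does give similarity to a contraction (this is Remark 3.5 of the paper), your telescoping identity $\wt{\al}[T^*,T] - T^*\wt{\al}[T^*,T]\,T = \al[T^*,T]$ is correct, and you have correctly identified the weight $\om_n = 1+\norm{T^n}^2$ and the factorization lemma as the relevant tools. But there is a genuine gap exactly at the point you yourself flag as ``the real work'': the uniform lower bound $G = \wt{\al}[T^*,T] \ge cI$. Since $\wt{\al}$ may have negative Taylor coefficients, $G$ is not even obviously positive semidefinite, and the factorization lemma does not say what you claim: it produces $\wt{\be}$ with $\wt{\be} \succ 0$ and $f := \wt{\be}\wt{\al} \succ 0$ (coefficientwise positivity), and there is no direct ``conversion'' of this into a lower bound for the diagonal expression $\sum_n \wt{\al}_n s_n$. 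An actual argument is needed to pass from the lemma to the bound, and your proposal stops precisely where that argument should begin. The paper avoids the problem altogether by choosing a different operator: it sets $A := f[T^*,T]$ rather than $\wt{\al}[T^*,T]$. Then $A \ge f_0 I > 0$ is trivial, because all $f_n \ge 0$ and each $T^{*n}T^n \ge 0$; and since $(1-t)f = \wt{\be}\al$, the multiplicativity of the hereditary calculus (Lemma 3.3 of the paper) gives
\[
A - T^*AT = \sum_{n=0}^\infty \wt{\be}_n\, T^{*n}\,\al[T^*,T]\,T^n \ \ge\ 0,
\]
each summand being nonnegative because $\wt{\be}_n \ge 0$ and $\al[T^*,T]\ge 0$. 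With your own first paragraph this finishes the proof.

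For completeness: your route through $G = \wt{\al}[T^*,T]$ can be repaired, but only by an argument you did not supply, which combines the factorization with the monotonicity you derived. For a unit vector $h$, absolute convergence in $A_\om$ justifies the rearrangement
\[
\sum_{k=0}^\infty \wt{\be}_k\, S(k) \;=\; \sum_{m=0}^\infty f_m s_m \;\ge\; f_0 s_0 \;=\; f_0 \;>\;0 ,
\]
since $f_m \ge 0$ and $s_m \ge 0$; on the other hand, $S(k) \le S(0)$ and $\wt{\be}_k \ge 0$ give $\sum_k \wt{\be}_k S(k) \le \wt{\be}(1)\, S(0)$, whence $\langle Gh,h\rangle = S(0) \ge f_0/\wt{\be}(1)$. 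This convolution-plus-monotonicity step is precisely the missing content of your proof; without it (or without switching to the paper's choice $A = f[T^*,T]$, which renders it unnecessary), the proposal does not prove the theorem.
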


Notice that $\sum_{n=0}^{\infty} \abs{\wt{\al}_n} (1+\norm{T^n}^2)
< \infty$ implies that $\sum_{n=0}^{\infty} \abs{\al_n}
(1+\norm{T^n}^2) < \infty$, so that the operator $\al[T^*,T]$ is
well-defined.

If $\al(t)$ is an admissible function and an operator $T$ on $H$
is related to $\al$ as in the above theorem, then we will say that
$T$ belongs to the class $\cC_\al$ (see the definition at the beginning
of Section~\ref{sec-C-alpha}).

An important particular case is when $\al$ is analytic on a disc $|t|<R$ of radius $R>1$, in particular,
if $\al$ is a polynomial or is rational. In this case, $\al$ is
admissible whenever $\al(t)>0$ on $[0,1)$
and $\al(t)$ has a simple root at $t=1$. We get that given a function $\al$ of this type and
a Hilbert space operator $T$, whose spectral radius is less than or equal to $1$, $T$ is similar
to a contraction whenever $\al[T^*,T] \geq 0$.

We remark that the condition that $\tilde{\alpha}$ has no roots in
$[0,1]$ has a clear spectral meaning. Indeed, the eigenvalues and,
more generally, the approximate point spectrum of $T$ are
contained in $\{z\in \ol{\D}: \alpha(|z|^2)\ge 0\}$. As it is seen
from the example of normal operators, under the above condition,
the approximate point spectrum of $T$ can be whatever closed
subset of the closed unit disc.
\comm{
However, we do not know whether Theorem~\ref{T1 B 27 GL}
holds true if $\alpha$ has zeros on $(0,1)$.
}
\marginp{Preguntas - en otro sitio?}

The main difference with the approach originated by Agler is that here we do not need to assume
that $1/\al(\bar w z)$ is a reproducing kernel. That is, we admit that some of
the Taylor coefficients of $1/\al(t)$ at $t=0$ can be negative and we also allow
$\al$ to have zeros in $\overline\D$ (excluding the interval $[0,1]$). Notice that
we only get similarity to a contraction; in fact, there are many operators with
$\|T\|>1$, to which our results apply.
Notice that the majority of papers
based on Agler's approach (for the case of a single operator) deal only with
contractions (see the Remark~\ref{rem-contr} below).

Our main tool for proving Theorem~\ref{T1 B 27 GL} is related with Banach algebras.
We say that a sequence $\om = \{ \om_n \}_{n=0}^{\infty}$ of positive real numbers is a \emph{\gs} if
\begin{enumerate}
    \item[(GW 1)] $\om_n \geq 1$ for every $n$,
    \item[(GW 2)] $\om_n \om_m \geq \om_{n+m}$ (submultiplicative property) for every $n,m$,
    \item[(GW 3)] $\om_{n}^{1/n} \to 1$.
\end{enumerate}


Given a {\gs} $ \om$, we define the corresponding \emph{weighted Wiener algebra} $A_\om$ as the following set
of analytic functions:
\[
A_\om := \Big\{ \al(t) = \sum_{n=0}^{\infty} \al_n t^n  \, : \,
\sum_{n=0}^{\infty} \abs{\al_n} \om_n < \infty \Big\}.
\]
It is immediate to check that $A_\om$ is a commutative, unital Banach algebra
of analytic functions in $\overline\D$.


For analytic functions $f(t)=\sum_{n=0}^\infty f_n t^n$ and $g(t)=\sum_{n=0}^\infty g_n t^n$, we will
use the notation $f \succcurlyeq g$ when $f_n \geq g_n$ for every $n \geq 0$ and
the notation  $f \succ g$ when $f \succcurlyeq g$ and $f_0 > g_0$.
To prove Theorem~\ref{T1 B 27 GL}, the following lemma
on factorization in the algebra $A_\om$ will be used.

\begin{lemma}\label{L2 B 27 GL}
Let $\om$ be a {\gs}. If $f \in A_\om$ is a positive
function on $[0,1]$, then there exists a function $g \in A_\om$ such that $g \succ 0$ and $fg \succ 0$.
\end{lemma}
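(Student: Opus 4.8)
The plan is to build $g$ as a product of two factors, each killing a different source of negative Taylor coefficients. First observe that once $g\in A_\om$ is found with $g\succcurlyeq 0$ and $g_0>0$, the constant-term requirement on $fg$ is automatic, since $(fg)_0=f_0g_0$ and $f_0=f(0)>0$. So it suffices to produce $g\in A_\om$ with $g\succcurlyeq 0$, $g_0>0$ and $fg\succcurlyeq 0$; if a candidate has $g_0=0$, factoring out the largest power of $t$ dividing it restores $g_0>0$ without affecting either positivity.

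My first tool handles an $f$ that is close to a positive constant. Write $f=f_0(1+w)$ and split $w=w^+-w^-$ into its nonnegative and nonpositive coefficient parts, so $w^{\pm}\succcurlyeq 0$. If $\norm{w^-}_{A_\om}<1$, then $1-w^-$ is invertible in $A_\om$ and $h:=(1-w^-)^{-1}=\sum_{k\ge 0}(w^-)^k\succcurlyeq 0$. A direct computation gives $fh=f_0\big(1+w^+(1-w^-)^{-1}\big)\succcurlyeq 0$, because $w^+\succcurlyeq 0$ and $(1-w^-)^{-1}\succcurlyeq 0$. Thus $g=h$ works whenever the negative part of $f/f_0$ is small; this disposes of every $f$ that is sufficiently close to a constant (in particular, zero-free and nearly flat).

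My second tool handles the shape of $f$, including its zeros, at the polynomial level. Since $f$ is continuous and strictly positive on the compact set $[0,1]$, it stays positive on $[0,1/\rho]$ for some $\rho\in(0,1)$. For a polynomial $p$ positive on $[0,1/\rho]$ I would use the substitution $s=\rho t/(1-\rho t)$, i.e. $1-\rho t=(1+s)^{-1}$: for $N\ge\deg p$ the function $(1-\rho t)^{-N}p(t)$ becomes a polynomial $G(s)=(1+s)^N p\big(s/(\rho(1+s))\big)$ that is strictly positive on $[0,\infty)$ with positive leading coefficient $p(1/\rho)$. By the classical theorem of P\'olya (one-variable form on $[0,\infty)$), $(1+s)^M G(s)$ has nonnegative coefficients for $M$ large; undoing the substitution gives the manifestly coefficientwise-nonnegative expansion $(1-\rho t)^{-K}p(t)=\sum_j b_j\rho^j\,t^j(1-\rho t)^{-j}$ with $b_j\ge 0$ and $K=N+M$. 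Hence $g_p:=(1-\rho t)^{-K}$ lies in $A_\om$ (its coefficients decay geometrically, so (GW3) yields summability against $\om_n$), satisfies $g_p\succcurlyeq 0$, and $p\,g_p\succcurlyeq 0$ no matter where the zeros of $p$ sit, as long as they avoid $[0,1/\rho]$.

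Finally I would combine the two tools. Approximate $f$ by a partial sum $p$ with $p>0$ on $[0,1/\rho]$ and $\norm{f-p}_{A_\om}$ small, and set $\phi:=f\,g_p=p\,g_p+(f-p)\,g_p$. The first summand is $\succcurlyeq 0$ by the polynomial step, so the negative part of $\phi$ is controlled by $\norm{(f-p)\,g_p}_{A_\om}\le\norm{f-p}_{A_\om}\,\norm{g_p}_{A_\om}$; if this is $<f_0=\phi_0$, the first tool applies to $\phi$ and produces $h$ with $\phi h\succcurlyeq 0$, so that $g=g_p\,h$ solves the problem. I expect the main obstacle to be exactly this last quantitative step: balancing the approximation error $\norm{f-p}_{A_\om}$, which tends to $0$ as $\deg p\to\infty$, against the growth of $\norm{g_p}_{A_\om}=\norm{(1-\rho t)^{-K}}_{A_\om}$, whose exponent $K$ produced by the P\'olya step grows with $\deg p$. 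This is where the axioms of the \gs\ $\om$ must be used in full: submultiplicativity (GW2) and $\om_n^{1/n}\to 1$ (GW3) are precisely what let one estimate $\norm{(1-\rho t)^{-K}}_{A_\om}$ and win the race (for instance by pushing $\rho$ towards $1$ only after fixing the required accuracy, or by sharpening the P\'olya exponent). It is this reconciliation, rather than either tool in isolation, that I view as the crux of the lemma.
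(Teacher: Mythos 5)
Your two tools are sound in isolation, and each has a counterpart in the paper: Tool 1 (the Neumann-series inversion of a series whose negative coefficient part is small) is exactly the step $v:=1/h$ in the paper's proof, and Tool 2 (P\'olya's theorem after the substitution $s=\rho t/(1-\rho t)$) is a correct, genuinely different substitute for the paper's elementary treatment of polynomials (Lemma \ref{L1 B 27 GL}--Corollary \ref{C2 B 27 GL}, which uses the squaring trick $(t^{2^j}+\la^{2^j})(t^{2^j}+\ol{\la}^{2^j})$ and factoring off real roots). The gap is in the assembly, precisely where you flag it, and it is not a deferrable detail: the accuracy you need, $\norm{f-p}_{A_\om}\,\norm{g_p}_{A_\om}<f_0$, depends on $p$ through $\rho$ and the P\'olya exponent $K$, while $p$ is chosen to meet that accuracy --- a circle for which you offer no exit, and for which the natural estimates run the wrong way. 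Note first that $f$ is defined only on $\ol{\D}$, so positivity on $[0,1/\rho]$ can be asserted only for the approximant, not for $f$; and already for an $f\in A_\om$ with radius of convergence $1$ and negative tail coefficients (say $f(t)=1-c\sum_{n\ge1}t^n/n^2$ with $c$ small) one has $p_N(x)\to-\infty$ at every fixed $x>1$, so the admissible $\rho_N$ is forced to tend to $1$, whence $\norm{g_{p_N}}_{A_\om}\ge(1-\rho_N)^{-K_N}\to\infty$, while $\norm{f-p_N}_{A_\om}$ may tend to $0$ arbitrarily slowly. Worse, $K_N$ is not under control: by Jentzsch's theorem the partial sums of a power series with radius of convergence $1$ have spurious zeros accumulating at every boundary point, in particular at $t=1$, and complex zeros close to the segment force the exponent in P\'olya's theorem to blow up. So the ``balancing'' is not a remaining technicality; as set up, the race is lost in general, and the proof does not close.

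What is missing is a restructuring, and it is exactly what the paper does: handle the tail multiplicatively, on its own, \emph{before} forming any product, instead of treating it as an additive perturbation of $fg_p$. Let $\ep>0$ with $f>\ep$ on $[0,1]$ and choose $N$ with $\sum_{n>N}\abs{f_n}\om_n<\ep/2$; this accuracy depends on $f$ alone, so no circularity arises. Then split, in the coefficientwise order, $f\succcurlyeq f_N+h$, where $f_N(t)=\sum_{n\le N}f_nt^n-\ep/2$ is a polynomial positive on $[0,1]$ and $h(t)=\ep/2+\sum_{n>N,\;f_n<0}f_nt^n$; the nonnegative tail coefficients are simply discarded, which is legitimate because only a coefficientwise inequality is needed in the end. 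Your Tool 1 applied to $h$ alone gives $v:=1/h\in A_\om$ with $v\succ0$, since the negative tail has $A_\om$-norm less than $\ep/2$; your Tool 2 (or the paper's Corollary \ref{C2 B 27 GL}) applied to the single, fixed polynomial $f_N$ gives $u\in A_\om$ with $u\succ0$ and $uf_N\succ0$, where now $\rho$ and $K$ are fixed once and for all by $f_N$, so no race occurs. Then $g:=uv$ satisfies $g\succ0$ and $fg\succcurlyeq(f_N+h)uv=(uf_N)v+u\succ0$. With this decomposition your P\'olya-based argument becomes a complete alternative proof of the lemma; without it, the quantitative reconciliation you yourself identify as the crux is exactly the missing piece.
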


In Section~\ref{sec-C-alpha}, we
collect some elementary properties of classes $C_\al$;
Proposition~\ref{properties of class C_a} and Lemma~\ref{matrix without Jordan blocks}
give some examples. In particular, we show that any diagonalizable matrix
with spectrum on the unit circle belongs to $\cC_\al$ for some
admissible function~$\al$.

Given an operator $T$ of class $\mathcal{C}_\al$, in
\marginp{Por secciones?}
Section~\ref{sec-NF-model} we will write down its concrete coanalytic model, in other words, an
explicit Nagy-Foias-like functional model of $T$ up to similarity.

To construct a functional model
of a contraction, first one has to single out its unitary part
(recall that the Nagy-Foias construction ``forgets'' this part).
Section~\ref{sec-unitary} is devoted to defining
the unitary part of an operator $T\in \mathcal{C}_\alpha$,
which is a necessary first step to passing to the Nagy-Foias transcription.

The standard Nagy-Foias model of a contraction $S\in L(H)$
makes use of its defect operator, which is defined
as a nonnegative square root $D_S=(I-S^*S)^{1/2}$.
This model is related with the following well-known identity
\begin{equation}
\label{NF-identity}
\|h\|^2
=\sum_{n=0}^\infty \|D_S S^n h\|^2 + \lim_{n\to \infty}\|S^n h\|^2, \qquad h\in H,
\end{equation}
valid for any contraction $S$ (see \cite[Section 1.10]{NFBK10}).
This motivates the next definition, which will be useful for us.

\begin{defi}
Let $T\in L(H)$ be a power bounded operator
(that is, $\sup_{n\ge0}\|T^n\|<\infty$), and let $D:H\to F$,
where $H,F$ are Hilbert spaces. We will say that $D$ is
\emph{an abstract
defect operator for} $T$ if there are some positive constants
$c,C$ such that for any $h\in H$,
\begin{equation}
\label{def-abstr-def-oper}
c\|h\|^2
\le
\sum_{n=0}^\infty \|DT^n h\|^2 +\limsup_{n\to \infty}\|T^n h\|^2
\le
C\|h\|^2\, .
\end{equation}
\end{defi}

By applying the Banach limit, we will show that a
power bounded operator is similar to a contraction if and only if
it has an abstract defect operator. More precisely, we will prove the following.

%
%

\begin{lemma}
\label{lem-defect}
Let $T\in L(H)$ be a power bounded operator.
Then an operator $D \in L(H)$ is an abstract defect operator for $T$ if and only if there
exists an invertible operator $W\in L(H)$ such that  $\wt{T}:=WTW^{-1}$
is a contraction and
$\|Dh\|=\|D_{\wt{T}}Wh\|$ for any $h\in H$.
\end{lemma}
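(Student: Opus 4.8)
The plan is to reduce the statement to the existence of a positive, invertible operator $P$ solving the Lyapunov-type identity $P-T^*PT=D^*D$, and then to take $W=P^{1/2}$. I would first dispatch the easy implication. Assume such a $W$ exists, so $\wT:=WTW^{-1}$ is a contraction with $\|Dg\|=\|D_{\wT}Wg\|$ for all $g$. Since $\wT^nW=WT^n$, applying this identity to $g=T^nh$ gives $\|D_{\wT}\wT^nWh\|=\|DT^nh\|$, and $\|\wT^nWh\|=\|WT^nh\|$, where the latter sequence is non-increasing because $\wT$ is a contraction. Feeding this into the Nagy--Foias identity \eqref{NF-identity} for $\wT$ and the vector $Wh$ yields $\|Wh\|^2=\sum_{n}\|DT^nh\|^2+\lim_n\|WT^nh\|^2$. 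As $W$ is invertible, $\|W\,\bigcdot\,\|$ is equivalent to $\|\bigcdot\|$, and a short sandwich argument (comparing $\lim_n\|WT^nh\|^2$ with $\|W\|^2\limsup_n\|T^nh\|^2$ from above and with $\tfrac1{\|W^{-1}\|^2}\limsup_n\|T^nh\|^2$ from below, and using power boundedness for the upper constant) produces \eqref{def-abstr-def-oper} with explicit $c,C$. This direction is routine.

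For the converse, which is the substantive one, I would fix a Banach limit $\operatorname{LIM}$ and define the form $q(h):=\sum_{n=0}^\infty\|DT^nh\|^2+\operatorname{LIM}_n\|T^nh\|^2$. By polarization this is a bounded, nonnegative quadratic form, hence $q(h)=\langle Ph,h\rangle$ for some $P=P^*\ge 0$. Shift invariance of the Banach limit is what makes the construction work: a one-line reindexing gives $q(Th)=q(h)-\|Dh\|^2$, which is exactly $\langle(P-T^*PT)h,h\rangle=\|Dh\|^2$, i.e. $P-T^*PT=D^*D\ge 0$. The upper estimate $q(h)\le C\|h\|^2$, and hence $P\le CI$, is immediate from $\operatorname{LIM}\le\limsup$ together with the right-hand inequality of \eqref{def-abstr-def-oper}.

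The main obstacle is the lower bound $q(h)\ge c'\|h\|^2$, i.e. the invertibility of $P$: a priori a Banach limit can lie far below the $\limsup$ that appears in the definition of an abstract defect operator, so the left-hand inequality of \eqref{def-abstr-def-oper} does not transfer directly. The key observation that resolves this is to apply the \emph{upper} inequality of \eqref{def-abstr-def-oper} not to $h$ but to the shifted vectors $T^kh$. Because $\limsup$ is shift invariant, this gives $\limsup_n\|T^nh\|^2\le C\|T^kh\|^2$ for every $k\ge 0$, so the sequence $\|T^nh\|^2$ has uniformly controlled oscillation; in particular $\operatorname{LIM}_n\|T^nh\|^2\ge\liminf_n\|T^nh\|^2\ge\tfrac1C\limsup_n\|T^nh\|^2$. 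Plugging this into the lower inequality of \eqref{def-abstr-def-oper} yields $q(h)\ge\sum_n\|DT^nh\|^2+\tfrac1C\limsup_n\|T^nh\|^2\ge\tfrac1C\big(\sum_n\|DT^nh\|^2+\limsup_n\|T^nh\|^2\big)\ge\tfrac{c}{C}\|h\|^2$, so $P\ge c'I$ is invertible.

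Finally, with $P$ positive and invertible I would set $W=P^{1/2}$ and $\wT=WTW^{-1}=P^{1/2}TP^{-1/2}$. Then $I-\wT^*\wT=P^{-1/2}(P-T^*PT)P^{-1/2}=P^{-1/2}D^*DP^{-1/2}\ge 0$, so $\wT$ is a contraction and $D_{\wT}^2=P^{-1/2}D^*DP^{-1/2}$. A direct computation gives $\|D_{\wT}Wh\|^2=\langle D_{\wT}^2P^{1/2}h,P^{1/2}h\rangle=\langle D^*Dh,h\rangle=\|Dh\|^2$, which is the required relation and completes the argument.
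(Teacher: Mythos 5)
Your proof is correct, and although both it and the paper's proof rest on the same skeleton --- the Banach-limit form $q(h)=\sum_{n}\|DT^nh\|^2+L\big(\{\|T^nh\|^2\}\big)$ together with the shift identity $q(Th)=q(h)-\|Dh\|^2$, which is what makes $\wt{T}$ a contraction --- you handle the one genuinely delicate step differently. The paper proves the two-sided bound $q(h)\asymp\|h\|^2$ by citing Gamal's lemma \cite[Lemma 2.1]{Gam11} (recorded as \eqref{gamal}): for every power bounded $T$, $\liminf_n\|T^nh\|^2\asymp\limsup_n\|T^nh\|^2$, which together with Theorem~A(e) pins the Banach limit to the $\limsup$ occurring in \eqref{def-abstr-def-oper}. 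You instead re-derive exactly the instance of that lemma you need from the hypothesis itself: applying the upper inequality of \eqref{def-abstr-def-oper} to the shifted vectors $T^kh$ and using shift-invariance of $\limsup$ gives $\limsup_n\|T^nh\|^2\le C\|T^kh\|^2$ for all $k$, hence $\limsup_n\|T^nh\|^2\le C\liminf_n\|T^nh\|^2$. This makes the converse direction self-contained, and your forward direction likewise avoids \eqref{gamal} by exploiting that the limit in \eqref{NF-identity} actually exists; what the citation buys the paper is brevity and a statement valid for all power bounded operators, whereas your argument only covers (and only needs) operators admitting an abstract defect operator. The remaining difference is mostly packaging: you realize the new inner product as a positive invertible solution $P$ of the Lyapunov equation $P-T^*PT=D^*D$ and set $W=P^{1/2}$, while the paper takes $W$ to be any isomorphism realizing the equivalent Hilbert norm; these are the same construction, but yours makes the final identities $I-\wt{T}^*\wt{T}=P^{-1/2}D^*DP^{-1/2}$ and $\|D_{\wt{T}}Wh\|=\|Dh\|$ completely explicit. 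Two small points you should state explicitly: in the chain $q(h)\ge\sum_n\|DT^nh\|^2+\tfrac1C\limsup_n\|T^nh\|^2\ge\tfrac1C\bigl(\sum_n\|DT^nh\|^2+\limsup_n\|T^nh\|^2\bigr)$ one needs $C\ge1$, so enlarge $C$ first; and the representation $q(h)=\langle Ph,h\rangle$ requires checking that the sesquilinear form $[x,y]=\sum_n\langle DT^nx,DT^ny\rangle+L\big(\{\langle T^nx,T^ny\rangle\}\big)$ is bounded, which follows from Cauchy--Schwarz and power boundedness, as in the paper.
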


The Nagy-Foias-like model we give
is in some aspects close to \cite{Yak2004}.
However, here we deal with a general case and not only
with a $C_{0\,\bigcdot}$ case, as in \cite{Yak2004}.
Since the model is only up to similarity,
it is not unique, but we suggest a reasonable choice. It will be proven that
for an operator $T\in\cC_\al$,
$(\alpha[T^*, T])^{1/2}$ can be taken as an abstract defect operator of $T$.
This will permit us to write down explicitly an analogue of the characteristic function of $T^*$.

Our Nagy-Foias-like transcription implies
the major part of usual consequences of the Nagy-Foias theory
(such as criteria for completeness of eigenvectors of $T$ in terms
of the determinant of $\Theta_*$, criteria for these to form a Riesz basis, similarity
to a normal operator, etc).
These criteria are formulated in terms of the determinant of $\Theta_*(z)$.
In Section~\ref{sec-Sp-class} we show that, roughly speaking, $\Theta_*$ has a determinant
whenever $\alpha[T^*, T]$ is of
trace class and discuss briefly the above-mentioned consequences.

In Section~\ref{sec-cont}, we will give necessary and sufficient
conditions for the inclusion of operator classes $\cC_\al\subset
\cC_\tau$. It will follow, in particular, that there are many functions
$\tau$ such that the class
$\cC_\tau$ strictly contains $C_{1-t}$, the class of all contractions on $H$.

As compared with Agler's case, the construction of the Nagy-Foias model
in our case has some extra
difficulties. They are related with the fact that for operators of class $C_\al$
in general, the limit $\lim_n \|T^n h\|^2$, $h\in H$, does not exist
(and therefore we need in general Banach limits). In Section~\ref{sec-existence-limit}, we prove,
that these limits do exist
under the additional assumption that $\al$ has no roots on the unit circle (except
for the root at $t=1$).


\section{Proof of Lemma \ref{L2 B 27 GL} on factorization in the Banach algebra $A_\omega$}

If $\om_n=1$ for all $n$, then the
algebra $A_\om$ is just the usual Wiener algebra (which we denote $A_W$) of analytic functions
in $\D$ with absolutely summable Taylor coefficients. In fact, if we denote
by $\mathcal{H}(\overline{\D})$ the set of functions analytic on (a neighborhood of)
$\overline{\D}$, then, obviously,
\[
\mathcal{H}(\overline{\D}) \subset A_\om \subset A_W
\]
for every {\gs} $\om$. The first inclusion is due to the exponential decay of
Taylor coefficients
of functions in $\mathcal{H}(\overline{\D})$.

%

\begin{lemma}\label{L1 B 27 GL}
If $q(t) = (t-\lambda)(t-\overline{\lambda})$ for some $\lambda
\in \C \setminus \R$, then there exists a polynomial $p$ such that
$p \succ 0$ and $pq \succ 0$.
\end{lemma}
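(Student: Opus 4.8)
The plan is to use that $q$ is strictly positive on the half-line and then multiply by a P\'olya-type factor. Write $\lambda = r e^{i\theta}$ with $r = |\lambda| > 0$ and $\theta \in (0,\pi)$ (we may assume $\Im \lambda > 0$, since replacing $\lambda$ by $\overline{\lambda}$ leaves $q$ unchanged). Then $q(t) = t^2 - 2r\cos\theta\, t + r^2$, and for real $t$ one has $q(t) = (t-\lambda)(t-\overline{\lambda}) = |t-\lambda|^2 > 0$, because $\lambda \notin \R$. If $\cos\theta \le 0$ then $q \succ 0$ already (its coefficients are $r^2>0$, $-2r\cos\theta \ge 0$, and $1$), so $p\equiv 1$ works; hence the only genuine case is $\cos\theta > 0$.

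For this case I would take as candidate multiplier $p(t) = (1+t)^N$ for a large integer $N$ to be fixed later. This automatically gives $p \succ 0$, and $(pq)_0 = p_0\,q(0) = r^2 > 0$, so it only remains to check that every coefficient of $(1+t)^N q(t)$ is nonnegative. Expanding, the coefficient of $t^k$ equals
\[
r^2 \binom{N}{k} - 2r\cos\theta \binom{N}{k-1} + \binom{N}{k-2},
\]
with the convention $\binom{N}{j}=0$ for $j<0$ or $j>N$.

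The heart of the argument is to show this quantity is $\ge 0$ for every $k$ once $N$ is large; this is exactly the one-variable P\'olya phenomenon for a polynomial positive on $[0,\infty)$. For $0 \le k \le N$ I would clear denominators: multiplying by $(N-k+1)(N-k+2)/\binom{N}{k} > 0$ reduces the claim to
\[
r^2 (N-k+1)(N-k+2) - 2r\cos\theta\, k(N-k+2) + k(k-1) \ge 0 .
\]
Setting $k = xN$, the dominant term is $N^2 g(x)$, where $g(x) = r^2(1-x)^2 - 2r\cos\theta\, x(1-x) + x^2 = \abs{(1-x)r - x e^{i\theta}}^2$. Since $\sin\theta \ne 0$, the imaginary part $-x\sin\theta$ vanishes only at $x=0$, where the real part equals $r\ne0$; thus $g$ has no zero on $[0,1]$, and by compactness $g \ge m > 0$ there. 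The remaining terms are $O(N)$ uniformly in $k$, so for $N$ large the displayed expression exceeds $N^2 m - CN > 0$ for all $0 \le k \le N$. The two leftover indices $k=N+1$ and $k=N+2$ give coefficients $N - 2r\cos\theta$ and $1$, which are likewise nonnegative for large $N$.

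Putting these together, for $N$ large enough $p(t) = (1+t)^N$ satisfies $p \succ 0$ and $pq \succ 0$, which proves the lemma. The only real obstacle is the uniform coefficient estimate of the previous paragraph; everything else is bookkeeping. One could instead simply invoke P\'olya's classical theorem (a polynomial positive on $[0,\infty)$ has coefficientwise nonnegative product with a high power of $1+t$) applied to $q$, but since the homogenized form $g(x) = \abs{(1-x)r - x e^{i\theta}}^2$ makes the positivity transparent, the self-contained argument above seems preferable.
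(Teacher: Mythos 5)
Your proof is correct, but it takes a genuinely different route from the paper's. The paper argues purely algebraically and explicitly: it takes $m$ to be the smallest nonnegative integer with $\Re(\lambda^{2^m})\le 0$ (such $m$ exists because repeated squaring doubles the argument of $\lambda\notin\R$), and sets $p(t)=\prod_{j=0}^{m-1}(t^{2^j}+\lambda^{2^j})(t^{2^j}+\bar{\lambda}^{2^j})$. By minimality of $m$, each factor $t^{2^{j+1}}+2\Re(\lambda^{2^j})t^{2^j}+|\lambda|^{2^{j+1}}$ has nonnegative coefficients, while the product with $q$ telescopes by iterated differences of squares to the trinomial $(pq)(t)=t^{2^{m+1}}-2\Re(\lambda^{2^m})t^{2^m}+|\lambda|^{2^{m+1}}\succ 0$; no limiting process or compactness enters, and the certificate is sparse and concrete. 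Your argument is instead the one-variable P\'olya phenomenon: multiply by $(1+t)^N$, clear denominators, identify the dominant homogenized term $N^2 g(k/N)$ with $g(x)=|(1-x)r-xe^{i\theta}|^2$ bounded below on $[0,1]$, and absorb the uniformly $O(N)$ remainder for large $N$. Your details check out: the reduction to $r^2(N-k+1)(N-k+2)-2r\cos\theta\,k(N-k+2)+k(k-1)\ge 0$ is exact, the remainder after subtracting $r^2(N-k)^2-2r\cos\theta\,k(N-k)+k^2$ is indeed linear in $N$ uniformly in $0\le k\le N$, the boundary indices $k=N+1,N+2$ are handled, and $(pq)_0=r^2>0$ supplies the strict inequality demanded by $\succ$. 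What each approach buys: the paper's construction is shorter and fully constructive (the degree of $p$ is determined by how close $\arg\lambda$ is to the real axis), whereas yours is less explicit ($N$ comes from a compactness bound) but more robust---it applies verbatim to any real polynomial strictly positive on $[0,\infty)$ with positive leading coefficient, so it would deliver Corollary \ref{C1 B 27 GL} in a single stroke rather than factor by factor.
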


\begin{proof}
Let $m$ be the smallest nonnegative integer such that $\Re
(\lambda^{2^m}) \leq 0$. We define
\[
p(t):= \displaystyle \prod_{j=0}^{m-1}
(t^{2^j}+\lambda^{2^j})(t^{2^j}+\bar{\lambda}^{2^j})
\]
(so that $p(t) = 1$ if $m=0$). Note that by the minimality of $m$,
for each factor we have
\[
(t^{2^j}+\lambda^{2^j})(t^{2^j}+\bar{\lambda}^{2^j}) = t^{2^{j+1}}
+ 2 \Re (\lambda^{2^j}) t^{2^j} + \abs{\lambda^{2^j}} \succ 0.
\]
Therefore $p \succ 0$. Moreover
\[
(pq)(t) = (t^{2^m}-\lambda^{2^m})(t^{2^m}-\bar{\lambda}^{2^m}) =
t^{2^{m+1}} - 2\Re(\lambda^{2^m}) t^{2^m} + \abs{\lambda^{2^m}}
\succ 0. \qedhere
\]
\end{proof}

\begin{cor}\label{C1 B 27 GL}
If $q$ is a real polynomial without real roots and $q(0)>0$, then
there exists a polynomial $p$ such that $p\succ 0$ and $pq \succ
0$.
\end{cor}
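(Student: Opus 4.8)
The plan is to reduce the statement to the quadratic case treated in Lemma~\ref{L1 B 27 GL} by factoring $q$ over $\C$ and then multiplying together the polynomials produced for each factor. Since $q$ is real and has no real roots, its complex roots split into conjugate pairs, so one may write
\[
q(t)=a\prod_{j=1}^{k}(t-\lambda_j)(t-\bar{\lambda}_j),
\]
where $a$ is the leading coefficient and each $\lambda_j\in\C\setminus\R$. Evaluating at $0$ gives $q(0)=a\prod_j\abs{\lambda_j}^2$, and since every factor $\abs{\lambda_j}^2$ is strictly positive, the hypothesis $q(0)>0$ is equivalent to $a>0$. This positivity of the leading coefficient is exactly what will be needed at the end.

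The key structural observation I would isolate is that the relation $\succ 0$ is preserved under multiplication: if $f\succ 0$ and $g\succ 0$, then the Cauchy product $fg$ again has nonnegative coefficients, and its constant term equals $f_0 g_0>0$, so $fg\succ 0$. In other words, the set of polynomials $f$ with $f\succ 0$ forms a monoid under multiplication, and it is manifestly stable under multiplication by a positive scalar.

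With these two ingredients the argument becomes immediate. For each $j$, I would apply Lemma~\ref{L1 B 27 GL} to $q_j(t):=(t-\lambda_j)(t-\bar{\lambda}_j)$ (legitimate, since $\lambda_j\notin\R$), obtaining a polynomial $p_j$ with $p_j\succ 0$ and $p_j q_j\succ 0$. Then I would set $p:=\prod_{j=1}^{k}p_j$. By the closure property, $p\succ 0$. Moreover
\[
pq=a\prod_{j=1}^{k}(p_j q_j),
\]
and since each $p_j q_j\succ 0$ and $a>0$, the same closure property yields $pq\succ 0$.

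I do not anticipate a genuine obstacle here; the only points requiring a little care are the verification that $\succ 0$ is multiplicatively closed, and the observation that the hypothesis $q(0)>0$ (rather than merely $q(0)\neq 0$) is used precisely to guarantee $a>0$, without which the leading scalar would flip signs and $pq$ could acquire negative coefficients.
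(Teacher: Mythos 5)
Your proposal is correct and follows essentially the same route as the paper: factor $q$ into conjugate quadratic factors times a positive constant, apply Lemma~\ref{L1 B 27 GL} to each factor, and take the product of the resulting polynomials. The paper's proof leaves implicit the two points you spell out (that the leading constant is positive because $q(0)>0$ and each $\abs{\lambda_j}^2>0$, and that $\succ 0$ is closed under products), so your write-up is just a more detailed version of the same argument.
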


\begin{proof}
Note that $q = Cq_1 \cdots q_k$, where $C$ is a positive constant
and each factor $q_j$ has the form $q_j (t) = (t - \lambda_j) (t -
\overline{\lambda}_j)$ for some $\lambda_j \in \C \setminus \R$.
Then for each factor $q_j$ we can construct the polynomial $p_j$
as in the previous lemma and we just take $p = p_1 \cdots p_k$.
\end{proof}

\begin{cor}\label{C2 B 27 GL}
If $q$ is a real polynomial such that $q(t)>0$ for every $t \in
[0,1]$, then there exists a function $u \in
\mathcal{H}(\overline{\D})$ such that $u\succ 0$ and $uq \succ 0$.
\end{cor}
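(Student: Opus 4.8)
The plan is to reduce the statement to Corollary~\ref{C1 B 27 GL} after isolating the real roots of $q$ lying in $(1,\infty)$, which are the genuinely new feature here. First I would factor $q$ over $\C$ according to the location of its roots. Since $q>0$ on $[0,1]$, it has no root in $[0,1]$, so its real roots split into those in $(-\infty,0)$ and those in $(1,\infty)$. Thus one can write
\[
q(t)=C\,Q_c(t)\,P(t)\,R(t),
\]
where $C$ is the leading coefficient, $Q_c$ is the product of the conjugate-pair factors $(t-\lambda_j)(t-\overline{\lambda}_j)$ coming from the non-real roots, $P(t)=\prod_k(t-a_k)$ collects the negative real roots $a_k<0$, and $R(t)=\prod_l(t-b_l)$ collects the real roots $b_l>1$. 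Each factor $t-a_k=t+|a_k|$ already satisfies $t-a_k\succ0$, so $P\succ0$.

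The factor $Q_c$ has no real roots and $Q_c(0)=\prod_j|\lambda_j|^2>0$, so Corollary~\ref{C1 B 27 GL} yields a polynomial $p_c$ with $p_c\succ0$ and $p_cQ_c\succ0$. The crux, and the main obstacle, is the block $R$: a single factor $t-b_l$ with $b_l>1$ is negative on $[0,1]$, and no multiplier can repair it on its own, because any $v\in\mathcal{H}(\overline{\D})$ with $v\succ0$ has $v(1)\ge v_0>0$, so $(t-b_l)v(t)$ would equal $(1-b_l)v(1)<0$ at $t=1$ and hence could not be $\succ0$. Instead I would invert the whole block: writing $R(t)=(-1)^k\prod_l(b_l-t)$, where $k$ is the number of roots $b_l$, set
\[
\frac{1}{\prod_l(b_l-t)}=\prod_l\frac{1}{b_l}\cdot\frac{1}{1-t/b_l},
\]
which is a product of geometric series with positive ratios $1/b_l\in(0,1)$; it therefore has nonnegative Taylor coefficients and positive constant term, i.e.\ it is $\succ0$, and it is analytic on $|t|<\min_l b_l$, a disc of radius larger than $1$, so it belongs to $\mathcal{H}(\overline{\D})$.

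Finally I would assemble $u$ and check the sign bookkeeping. Put
\[
u(t):=s\,p_c(t)\,\frac{1}{\prod_l(b_l-t)},
\]
with $s>0$ a constant to be fixed. Since $p_c\succ0$ and the last factor is $\succ0$, any $s>0$ gives $u\succ0$ and $u\in\mathcal{H}(\overline{\D})$. Multiplying out, the factors $\prod_l(b_l-t)$ cancel and
\[
u\,q=s\,C\,(-1)^k\,(p_cQ_c)\,P,
\]
where $(p_cQ_c)P\succ0$ as a product of functions that are $\succ0$. It remains to arrange $sC(-1)^k>0$ with $s>0$, i.e.\ to see that $C(-1)^k>0$; this is exactly where the hypothesis enters. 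Evaluating the sign of $q$ at any point of $[0,1]$, the factors $Q_c$ and $P$ are positive there while each $t-b_l$ is negative, so $\operatorname{sign}q=\operatorname{sign}(C)\,(-1)^k$ on $[0,1]$; since $q>0$ there, $C(-1)^k>0$. Choosing $s=1/\big(C(-1)^k\big)>0$ then gives $uq=(p_cQ_c)P\succ0$, which completes the argument. Everything apart from the block $R$ is either immediate or supplied by Corollary~\ref{C1 B 27 GL}, so the only real work is the inversion trick together with this sign check.
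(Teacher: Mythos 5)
Your proof is correct and follows essentially the same route as the paper: split $q$ by root location (nonreal, negative, and real roots in $(1,\infty)$), apply Corollary~\ref{C1 B 27 GL} to the nonreal part, observe the negative-root factors are already $\succ 0$, and invert the factor with roots beyond $1$ via geometric series to get a $\succ 0$ function in $\mathcal{H}(\overline{\D})$. The only difference is presentational: the paper absorbs your explicit sign bookkeeping with $C$ and $(-1)^k$ into a ``without loss of generality'' normalization of the three factors.
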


\begin{proof}
Decompose $q$ as the product of polynomials $q_{nr}, q_+$ and $q_-$, where
the roots of $q_{nr}$ are nonreal, the roots of $q_+$ are positive
and the roots of $q_-$ are negative.
%
%
Without loss of generality, $q_+(0) = 1$, $q_-(0)=1$ and $q_{nr}(0) = q(0)>0$.
Therefore $q_- \succ 0$ and by Corollary \ref{C1 B 27 GL}, there exists a
polynomial $p$ such that $p \succ 0$ and $pq_{nr} \succ 0$.
Notice that $1/q_+\in \mathcal{H}(\overline{\D})$ and $1/q_+\succ 0$. Hence, we
can take $u := p/q_+$, and the statement follows.
\end{proof}


%

\begin{proof}[Proof of Lemma \ref{L2 B 27 GL}]
Let $f(t) > \varepsilon > 0$, for $t \in [0,1]$. Take $N \in \N$
such that $\sum_{n=N+1}^{\infty} \abs{f_n} \om_n < \varepsilon/2 $. Hence
it is obvious that $\sum_{n=N+1}^{\infty} \abs{f_n} < \varepsilon/2 $. Put
\[
f_N(t) = \sum_{n=0}^{N} f_n t^n - \frac{\varepsilon}{2}, \quad
h(t) =  \frac{\varepsilon}{2} + \sum_{n\ge N+1;\; f_n < 0}  f_n
t^n.
\]
Then $f_N$ is a polynomial and $h \in A_\om$. Since $\abs{f_N(t)} > \varepsilon
- \varepsilon/2 - \varepsilon/2 = 0$ for $t \in [0,1]$, we can apply Corollary \ref{C2 B 27 GL}
to obtain a function $u \in \mathcal{H}(\overline{\D})$ such that $u \succ 0$
and $uf_N \succ 0$. Hence $u \in A_\om$.

On the other hand, for $t \in \overline{\D}$ we have
\[
\Big|\sum_{n\ge N+1; \; f_n < 0}  f_n t^n\Big| \leq
\sum_{n=N+1}^{\infty} \abs{f_n} < \varepsilon/2.
\]
Hence $h(t) \not = 0$ for $t \in \overline{\D}$.
Notice that the properties of the weight imply that the characters of $A_\om$
are exactly the evaluation functionals at the
points of $\overline\D$.
\marginp{más explicación??}
It follows that $v:= 1/h \in A_\om$. Note that $v
\succ 0$, because it has the form $c/ (1-a)$, where $a \in A_\om$, $a
\succ 0$ and $c=\varepsilon/2$. Put $g:= uv \in A_\om$. Then $g \succ 0$ and since $f \succcurlyeq f_N + h$, we have
\[
gf \succcurlyeq g(f_N + h) = vuf_N + u \succ 0.  \qedhere
\]
\end{proof}


\section{The classes $\cC_\al$ and Proof of Theorem \ref{T1 B 27 GL}}
\label{sec-C-alpha}

In what follows, $\al(t) = (1-t)\wt{\al}(t)$
will be an admissible function. We associate to it the class of operators
\[
\mathcal{C}_\al := \{ T \in L(H): \quad \sigma(T)\subset \overline{\D}, \;
\sum_{n=0}^{\infty} \abs{\wt{\al}_n} (1+\norm{T^n}^2) < \infty,\; \al[T^*,T] \geq 0 \}.
\]
For example, $\cC_{1-t}$ is just the set of all contractions in $L(H)$.
Notice that any admissible function $\al\in\mathcal{H}(\overline{\D})$ has a simple root
at $t=1$, and the corresponding $\wt\al$ is also in $\mathcal{H}(\overline{\D})$. In this case,
any $T \in L(H)$ with
\marginp{Estoy quitando $r(T)$}
$\sigma(T)\subset \overline{\D}$ that satisfies $\al[T^*,T] \geq 0$
is in $\mathcal{C}_\al$.

Theorem \ref{T1 B 27 GL} asserts that
\emph{if $T \in \cC_{\al}$ for some admissible function $\al$ then $T$ is similar to a contraction}.

Here are some elementary properties of the classes $\cC_\al$.

\begin{prop}\label{properties of class C_a}
\begin{enumerate}
    \item[(a)] If $N \in L(H)$ is a normal operator with $\|N\|\le 1$, then $N \in \cC_\al$ for
    every admissible function $\al$. In particular, all unitary operators are in $\cC_\al$.
    \item[(b)] If $T_1, T_2 \in \cC_\al$, then
    the orthogonal sum $T_1 \oplus T_2$ also is in $\cC_\al$.
    \item[(c)] It $T \in \cC_\al$, then $\zeta T \in \cC_\al$ for every $\zeta$ on the unit circle $\T=\{z:|z|=1\}$.
    \item[(d)] If $T \in \cC_\al$, then $T | L \in \cC_\al$ for every $T$-invariant subspace $L \ss H$.
    \item[(e)] If $T$ is Hilbert space operator, whose spectral radius is less than one,
     then $T \in \mathcal{C}_{1-t^n}$ for any sufficiently large $n > 0$.
\end{enumerate}
\end{prop}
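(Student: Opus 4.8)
The plan is to check, in each of the five items, the three conditions that define membership in $\cC_\al$: the spectral inclusion $\sigma(\cdot)\ss\overline{\D}$, the summability $\sum_{n}\abs{\wt\al_n}\bigl(1+\norm{\cdot^{\,n}}^2\bigr)<\infty$, and the positivity $\al[\cdot^{\,*},\cdot]\ge 0$; for item~(e) one must in addition verify that the relevant function is admissible. Throughout I will use that $\sum_n\abs{\al_n}<\infty$, which follows from $\al_n=\wt\al_n-\wt\al_{n-1}$ and $\sum_n\abs{\wt\al_n}<\infty$, and that admissibility forces $\al=(1-t)\wt\al\ge 0$ on $[0,1]$. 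In three of the cases the hereditary calculus transforms in an elementary way under the operation at hand, so the only substantive point is the positivity.

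Items (b), (c), (d) I would dispatch directly. For $T_1\oplus T_2$, spectral mapping gives $\sigma(T_1\oplus T_2)=\sigma(T_1)\cup\sigma(T_2)\ss\overline{\D}$, the identity $\norm{(T_1\oplus T_2)^n}=\max\bigl(\norm{T_1^n},\norm{T_2^n}\bigr)$ bounds the summability term by $(1+\norm{T_1^n}^2)+(1+\norm{T_2^n}^2)$, and $\al[(T_1\oplus T_2)^*,T_1\oplus T_2]=\al[T_1^*,T_1]\oplus\al[T_2^*,T_2]\ge 0$. For $\zeta T$ with $\abs\zeta=1$ one has $\sigma(\zeta T)=\zeta\,\sigma(T)$, $\norm{(\zeta T)^n}=\norm{T^n}$, and the cancellation $(\zeta T)^{*n}(\zeta T)^n=\abs{\zeta}^{2n}T^{*n}T^n=T^{*n}T^n$, so $\al[(\zeta T)^*,\zeta T]=\al[T^*,T]\ge 0$. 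For $S:=T|L$ with $L$ invariant, I would use the spectral radius formula: $r(S)=\lim_n\norm{T^n|L}^{1/n}\le\lim_n\norm{T^n}^{1/n}=r(T)\le 1$, hence $\sigma(S)\ss\overline{\D}$; summability follows from $\norm{S^n}\le\norm{T^n}$; and since $S^nh=T^nh$ for $h\in L$, for such $h$ one has $\langle\al[S^*,S]h,h\rangle=\sum_n\al_n\norm{T^nh}^2=\langle\al[T^*,T]h,h\rangle\ge0$, giving $\al[S^*,S]\ge 0$.

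For item (a), let $N$ be normal with $\norm N\le1$. Summability is immediate from $\norm{N^n}\le1$, and $\sigma(N)\ss\overline{\D}$ since $r(N)=\norm N\le1$. For the positivity I would invoke the spectral theorem, $N=\int z\,dE(z)$: normality gives $N^{*n}N^n=(N^*N)^n$, and absolute summability of the $\al_n$ lets me interchange sum and integral to obtain
\[
\al[N^*,N]=\int\al\bigl(\abs{z}^2\bigr)\,dE(z).
\]
Since $\abs z\le\norm N\le1$ on $\sigma(N)$ and $\al\ge0$ on $[0,1]$, the integrand is nonnegative, so $\al[N^*,N]\ge0$. Unitaries are the special case $\norm N=1$.

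For item (e) I would first observe that $1-t^n=(1-t)(1+t+\dots+t^{n-1})$, so $\wt\al(t)=1+t+\dots+t^{n-1}\ge1$ on $[0,1]$ and $\sum_k\abs{\wt\al_k}=n<\infty$; thus $1-t^n$ is admissible and its summability condition is automatic, being a finite sum. Since $r(T)<1$, the spectral radius formula gives $\norm{T^n}\to0$, so $\sigma(T)\ss\D\ss\overline{\D}$ and, for all large $n$, $\norm{T^n}\le1$; then $T^{*n}T^n\le I$ and $(1-t^n)[T^*,T]=I-T^{*n}T^n\ge0$, proving $T\in\cC_{1-t^n}$. Every item here is routine once the definition is unwound; the two places I would take care are the spectral-radius estimate in (d) (the inclusion $\sigma(T|L)\ss\sigma(T)$ can fail in general, e.g.\ the unilateral shift is the restriction of the bilateral shift to an invariant subspace) and the justification of the sum--integral interchange in the spectral-theorem computation in (a).
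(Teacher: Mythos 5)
Your proof is correct and follows essentially the same route as the paper, which simply declares (a)--(c) obvious, proves (d) via $\norm{S^n}\le\norm{T^n}$ together with restriction of the quadratic form $\sum_n\al_n\norm{T^nh}^2$, and proves (e) from $\norm{T^n}<1$ for $n\gg 0$. Your added details --- the spectral-theorem computation in (a) and the spectral-radius argument in (d), which correctly avoids the false inclusion $\sigma(T|L)\ss\sigma(T)$ --- are exactly the right way to fill in what the paper leaves unsaid.
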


\begin{proof}
Statements (a)-(c) are obvious. (d) Put $S := T | L$. The first two conditions for $S \in \cC_\al$ follows from $\norm{S^n} \leq \norm{T^n}$, and the positivity condition is obvious since $\sum_{n=0}^{\infty} \al_n \norm{T^n h}^2 \geq 0$ for every $h \in H$, in particular for every $h \in L$. (e) Note that $\sigma(T) \subset \D$ implies that $\norm{T^n} < 1$ for $n \gg 0$.
\end{proof}

\begin{lemma}\label{matrix without Jordan blocks}
For any complex square matrix $T$ without nontrivial Jordan
blocks such that $\sigma(T) \ss \T$, there exists an
admissible polynomial $p(t)$ such that $p[T^*,T] = 0$.
\end{lemma}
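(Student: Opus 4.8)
The plan is to use the fact that ``no nontrivial Jordan blocks'' means exactly that $T$ is diagonalizable, and to convert the hereditary expression $p[T^*,T]$ into an ordinary polynomial evaluated at a single linear operator. Since $T$ is a $k\times k$ matrix, I work on the finite-dimensional space $L(H)=M_k(\C)$, $H=\C^k$, and introduce the linear map $\Phi\colon L(H)\to L(H)$, $\Phi(X)=T^*XT$. A one-line induction gives $\Phi^n(I)=T^{*n}T^n$, so for every polynomial $p(t)=\sum_n p_n t^n$ one has
\[
p[T^*,T]=\sum_n p_n\,\Phi^n(I)=p(\Phi)(I).
\]
Hence it suffices to produce an admissible polynomial $p$ with $p(\Phi)=0$, for then $p[T^*,T]=p(\Phi)(I)=0$.

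Next I would analyze $\Phi$ through its vectorization: the map $X\mapsto T^*XT$ is $T^{\mathsf T}\otimes T^*$, so its spectrum is $\Lambda:=\{\la_i\ol{\la_j}:1\le i,j\le k\}$, where $\la_1,\dots,\la_k$ are the eigenvalues of $T$. Moreover $\Phi$ is diagonalizable, because $T$ is, hence so are $T^*$ and $T^{\mathsf T}$, and the Kronecker product of diagonalizable matrices is diagonalizable. Two features of $\Lambda$ are crucial and both come from the hypothesis $\sigma(T)\ss\T$: every $\mu=\la_i\ol{\la_j}$ lies on $\T$, the point $1$ belongs to $\Lambda$ (take $i=j$), and $\Lambda$ is symmetric under complex conjugation (since $\ol{\la_i\ol{\la_j}}=\la_j\ol{\la_i}\in\Lambda$).

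Because $\Phi$ is diagonalizable, $p(\Phi)=0$ holds as soon as $p$ vanishes at every point of $\Lambda$; simple zeros suffice. The remaining task, which is the only delicate point, is to interpolate the points of $\Lambda$ by a polynomial that is genuinely \emph{admissible}, i.e.\ of the form $p=(1-t)\tilde p$ with $\tilde p$ positive on $[0,1]$ and a \emph{simple} root at $t=1$. I would build $p$ from elementary real factors over the \emph{distinct} points of $\Lambda$: the single factor $(1-t)$ for the point $1$; a factor $(t+1)$ if $-1\in\Lambda$; and for each conjugate pair $\mu,\ol\mu\in\Lambda\setminus\{1\}$ of non-real points the quadratic $(t-\mu)(t-\ol\mu)=t^2-2(\Re\mu)\,t+1$. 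The key elementary estimate is that for $\mu=e^{i\theta}\ne 1$ on $\T$ the quadratic $t^2-2\cos\theta\,t+1$ is strictly positive on $[0,1]$: on that interval its minimum is attained either at an endpoint (value $1$, or $2(1-\cos\theta)>0$) or at $t=\cos\theta$ (value $\sin^2\theta>0$), and likewise $t+1>0$ on $[0,1]$. Thus $\tilde p$, the product of all factors except $(1-t)$, is positive on $[0,1]$, so $p=(1-t)\tilde p$ is admissible, and by construction $p$ has a simple zero at each point of $\Lambda$. Therefore $p(\Phi)=0$ and $p[T^*,T]=0$.

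The main obstacle is not the operator-theoretic reduction, which is routine once $\Phi$ is introduced and seen to be diagonalizable, but ensuring that the interpolating polynomial is admissible. This requires keeping the root at $t=1$ simple (so that $\tilde p(1)>0$) and verifying the positivity on $[0,1]$ of the quadratic factors associated with the remaining spectral points, which is precisely where the hypothesis $\sigma(T)\ss\T$ enters.
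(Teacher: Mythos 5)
Your proof is correct, and although it ends up with essentially the same polynomial as the paper --- $(1-t)$ times real quadratics $(t-\mu)(t-\bar\mu)=t^2-2\Re(\mu)\,t+1$ vanishing at the eigenvalue products $\mu=\lambda_i\bar\lambda_j$ --- the mechanism by which you prove $p[T^*,T]=0$ is genuinely different. The paper expands $h=\sum_k h_k v_k$ in the eigenbasis and computes the quadratic form directly,
\[
\langle p[T^*,T]h,h\rangle=\sum_{k,\ell}p(\lambda_k\bar\lambda_\ell)\,h_k\bar h_\ell\,\langle v_k,v_\ell\rangle=0,
\]
where the cross terms survive because the eigenvectors need not be orthogonal --- which is exactly why $p$ must vanish at all products $\lambda_k\bar\lambda_\ell$ and not just at $|\lambda_k|^2=1$. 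You instead identify $p[T^*,T]=p(\Phi)(I)$ for the map $\Phi(X)=T^*XT$, recognize $\Phi$ as $T^{\mathsf T}\otimes T^*$ under vectorization, conclude that $\Phi$ is diagonalizable with spectrum $\Lambda=\{\lambda_i\bar\lambda_j\}$, and invoke the minimal polynomial. Your route is slightly less elementary (Kronecker products), but it buys more: you get the stronger statement $p(\Phi)=0$, i.e.\ $\sum_n p_n T^{*n}XT^n=0$ for every matrix $X$, not only $X=I$. It is also more careful on one point: by interpolating only the \emph{distinct} points of $\Lambda$ (and using the linear factor $t+1$ when $-1\in\Lambda$), you keep the root of $p$ at $t=1$ simple, whereas the paper's product $\prod_{k<\ell}\bigl(1-2\Re(\lambda_k\bar\lambda_\ell)t+t^2\bigr)$ acquires factors $(1-t)^2$ whenever eigenvalues repeat (e.g.\ $T=I$), so the polynomial displayed there fails to be admissible as written and needs precisely the deduplication that your construction performs.
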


\begin{proof}
Suppose $T$ is of size $n\times n$.
Let $\{ v_j \}$ $(1\le j\le n)$ be a basis of
eigenvectors of $T$ in $\C^n$ and let $\la_j\in\T$ be the corresponding eigenvalues.
Consider the polynomial
\[
p(t) = (1-t) \prod_{1 \leq k < \ell \leq n} (1- 2 \Re(\la_k\bar \la_\ell)t + t^2).
%
%
\]
Notice that it only has roots on the unit circle; it follows that $p$ is admissible.
We will prove that $p[T^*,T] = 0$. Each $h \in \C^n$ can be written as $h =
\sum h_j v_j$. We get
\[
\begin{split}
\big\langle p[T^*,T]h,h \big\rangle &= \sum_j p_j \Big\langle T^j \sum_k h_k v_k, T^j \sum_\ell h_\ell v_\ell \Big\rangle
= \sum_j p_j \Big\langle \sum_k \lambda_{k}^{j} h_k v_k, \sum_\ell \lambda_{\ell}^{j} h_\ell v_\ell \Big\rangle \\
&= \sum_{k,\ell} \sum_j p_j  \overline{\lambda_{\ell}^{j}} \lambda_{k}^{j}\, h_k \bar h_\ell \langle v_k,v_\ell \rangle
= \sum_{k,\ell} p(\lambda_k \bar\lambda_\ell) h_k
\bar h_\ell \langle v_k,v_\ell \rangle = 0,
\end{split}
\]
because $p(\lambda_k \overline{\lambda_\ell})=0$ for all $k,\ell$.
\end{proof}


For the proof of Theorem \ref{T1 B 27 GL} we need some technical results. Let $\om$ be a {\gs} and let $f \in A_\om$. If $T,B \in L(H)$ and $T$ satisfies the condition $\norm{T^n}^2 \lesssim \om_n$ (i.e., $\norm{T^n}^2 \leq C \om_n$ for some positive constant $C$), then the operator
\[
f[T^*,T](B) := \sum_{n=0}^{\infty} f_n T^{*n}BT^n
\]
is well defined. Indeed, $\norm{f[T^*,T](B)} \lesssim \norm{B} \norm{f}_{A_\om}$. Note that, in particular, $f[T^*,T](I) = f[T^*,T]$.

\begin{lemma}\label{L3 B 27 GL}
Let $\om$ be a {\gs}, $f,g,h \in A_\om$ such that $fg=h$ and let $T,B \in
L(H)$. If $\norm{T^n}^2 \lesssim \om_n$ then one has
\begin{enumerate}
\item[\textnormal{\textbf{(i)}}] \enspace
$
h[T^*,T](B) = g[T^*,T](f[T^*,T](B))
$.

\item[\textnormal{\textbf{(ii)}}] \enspace
$
h[T^*,T] = g[T^*,T](f[T^*,T])
$.
\end{enumerate}
\end{lemma}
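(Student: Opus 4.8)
The plan is to prove both identities by direct manipulation of the defining series, reducing everything to the single key fact that the Taylor coefficients of a product $h = fg$ in $A_\om$ are given by the convolution $h_k = \sum_{i+j=k} f_i g_j$, and that all the relevant series converge absolutely under the hypothesis $\norm{T^n}^2 \lesssim \om_n$. Since part (ii) is just part (i) applied with $B = I$ (recall $h[T^*,T](I) = h[T^*,T]$ and similarly for the inner calculus), I would prove only (i) and then remark that (ii) follows by taking $B=I$.

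For part (i), I would start from the right-hand side. By definition,
\[
g[T^*,T]\big(f[T^*,T](B)\big)
= \sum_{j=0}^\infty g_j\, T^{*j}\Big(\sum_{i=0}^\infty f_i\, T^{*i} B T^i\Big) T^j
= \sum_{j=0}^\infty \sum_{i=0}^\infty g_j f_i\, T^{*(i+j)} B T^{i+j}.
\]
The first thing to justify is that this double series converges absolutely in operator norm, so that I may rearrange it freely. This is where the weight hypothesis enters: since $\norm{T^n} \le C^{1/2}\om_n^{1/2}$ and $\om$ is submultiplicative, one has $\norm{T^{*(i+j)} B T^{i+j}} \le \norm{B}\,\norm{T^{i+j}}^2 \le C\norm{B}\,\om_{i+j} \le C\norm{B}\,\om_i\om_j$, so the double series is dominated termwise by $C\norm{B}\sum_{i,j}\abs{f_i}\om_i\,\abs{g_j}\om_j = C\norm{B}\,\norm{f}_{A_\om}\norm{g}_{A_\om} < \infty$. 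Absolute convergence secured, I then collect terms with $i+j = k$ fixed:
\[
\sum_{i+j=k} g_j f_i = h_k,
\]
which gives
\[
g[T^*,T]\big(f[T^*,T](B)\big)
= \sum_{k=0}^\infty \Big(\sum_{i+j=k} f_i g_j\Big) T^{*k} B T^k
= \sum_{k=0}^\infty h_k\, T^{*k} B T^k
= h[T^*,T](B),
\]
as claimed.

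The step I expect to be the only real point requiring care is the \emph{absolute convergence and rearrangement} of the double series: everything else is formal. Once submultiplicativity (GW\,2) of the {\gs} $\om$ is used to bound $\om_{i+j}$ by $\om_i\om_j$, Fubini's theorem (equivalently, the fact that an absolutely convergent series in a Banach space may be summed in any order) licenses the regrouping by diagonals $i+j=k$ and identifies the inner sum with the convolution coefficient $h_k$. The submultiplicative property is thus precisely the structural feature of a good weight that makes this manipulation legitimate, and it is the crux of the argument.
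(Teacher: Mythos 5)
Your proof is correct, and it takes a different (though closely related) route from the paper's. You expand $g[T^*,T]\big(f[T^*,T](B)\big)$ as a double series, use (GW\,2) together with $\norm{T^{i+j}}^2\lesssim\om_{i+j}\le\om_i\om_j$ to get absolute convergence in operator norm, and then invoke Fubini to regroup along the diagonals $i+j=k$, identifying the inner sums with the Cauchy-product coefficients $h_k$ of $h=fg$. The paper instead argues by truncation: it sets $g_{[N]}(t)=\sum_{n=0}^N g_nt^n$, uses the Banach-algebra inequality $\norm{f(g-g_{[N]})}_{A_\om}\le\norm{f}_{A_\om}\norm{g-g_{[N]}}_{A_\om}$ (this is where submultiplicativity enters there) together with the bound $\norm{\phi[T^*,T](B)}\lesssim\norm{B}\,\norm{\phi}_{A_\om}$ to show $(g_{[N]}f)[T^*,T](B)\to h[T^*,T](B)$, and separately uses the identity $(z^nf)[T^*,T](B)=T^{*n}f[T^*,T](B)T^n$ to show $(g_{[N]}f)[T^*,T](B)\to g[T^*,T]\big(f[T^*,T](B)\big)$; equating the two limits gives (i). The underlying quantitative input is identical in both arguments, but your version is more elementary and self-contained (one rearrangement of an absolutely convergent double series), whereas the paper's version factors the work through two structural facts --- that $A_\om$ is a Banach algebra and that $\phi\mapsto\phi[T^*,T](B)$ is bounded from $A_\om$ to $L(H)$ --- which are stated just before the lemma and reused elsewhere. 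One tiny point you glide over: in your first display you move $T^{*j}(\cdot)T^j$ inside the infinite sum defining $f[T^*,T](B)$; this is justified by continuity of left and right multiplication by a fixed bounded operator, and is worth a half-sentence, but it is not a gap. Your reduction of (ii) to (i) via $B=I$ is exactly what the paper does.
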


\begin{proof}
Let us define
$$
g_{[N]}(t) := \sum_{n=0}^{N} g_n t^n \quad \textrm{and} \quad
h_{N}:= f g_{[N]}   \quad (N \geq 0).
$$
Then, $\| g - g_{[N]} \|_{A_\om} \xrightarrow[N \to
\infty]{} 0$ and $ \| h - fg_{[N]} \|_{A_\om} = \| fg
- fg_{[N]} \|_{A_\om} \xrightarrow[N \to \infty]{} 0
$.
It easily implies that
\begin{equation}
\label{1a}
\| (g_{[N]}f)[T^*,T](B) - h[T^*,T](B) \|_{L(H)} \xrightarrow[N \to
\infty]{} 0.
\end{equation}
Note that $(z^n f)[T^*,T](B) = T^{*n} f[T^*,T](B) T^n$ for every
$n \geq 0$. Hence
$$
(g_{[N]}f)[T^*,T](B) = \sum_{n=0}^{N} g_n T^{*n} f[T^*,T](B) T^n,
$$
and therefore,
\begin{equation}
\label{1b}
\| (g_{[N]}f)[T^*,T](B) - g[T^*,T](f[T^*,T](B))
\|_{L(H)} \xrightarrow[N \to \infty]{} 0.
\end{equation}
Formulas \eqref{1a} and \eqref{1b} give (i). To get (ii), one just has to put $B=I$.
\end{proof}

If $A,T \in L(H)$ and $A$ is positive, it is said that $T$
\emph{is an $A$-contraction} if $T^* A T \leq A$.

\begin{rem}\label{R1 B 27 GL}
$T$ is similar to a contraction if and only if $T$ is an
$A$-contraction for some $A \geq \varepsilon I$,
where $\varepsilon >0$.
\end{rem}

\begin{proof}[Proof of Theorem \ref{T1 B 27 GL}]
Put $\om_n=1+\|T^n\|^2$. Since $\|T^{m+n}\|\le \|T^{m}\|\|T^{n}\|$,
the weight $\om$ satisfies (GW 1) -- (GW 3). By Lemma \ref{L2 B 27 GL},
there exists a function $\wt{\be} \in A_W$ such that $\wt{\be} \succ 0$ and
$f := \wt{\be} \wt{\al} \succ 0$. Then $(1-t) f = \wt{\be}(t) \al(t)$ and by Lemma \ref{L3 B 27 GL} (ii) we get
\begin{equation}
\label{des-operadores}
f[T^*,T] - T^* f[T^*,T] T =
\sum_{n=0}^{\infty} \wt{\be}_n T^{*n} \al[T^*,T] T^n \geq 0.
\end{equation}
Hence $T$ is a $f[T^*,T]$-contraction and the theorem
follows from Remark \ref{R1 B 27 GL}.
\end{proof}

\comm{
In a similar way, one gets the following (see~\cite[Theorem 3.4 and Corollary 4.5]{Olo15})

\begin{thm}\label{T1new}
Let $T \in L(H)$ be an operator whose spectrum is contained in the
\marginp{\bf !!! QUITAR}
closed unit disc $\overline{\D}$. Let $\al(t)=(1-t)^a\wt{\al}(t)$ be
a function such that $a>0$ and $\sum_{n=0}^{\infty}
\abs{\wt{\al}_n} (1+\norm{T^n}^2) < \infty$. If $\al[T^*,T] \geq
0$, then $T$ is similar to an operator $\tilde T$ such that
$(1-t)^a[\tilde T^*, \tilde T]\ge 0$.
\end{thm}
\marginp{$T$ similar to a contraction?}
}


%
%
%


\section{The abstract defect operator of $T$}
\label{sec-abstr-def-oper}

Let us begin by recalling the notion of a Banach limit. If we denote by $c$ the set of all convergent complex sequences then we can define the linear functional $L : c \to \C$ given by $L(x) = \lim x_n$ for every $x = \{ x_n \}_{n=1}^{\infty} \in c$. It is immediate that $\norm{L} = 1$, $L(x') = L(x)$ if $x' = \{ x_n \}_{n=2}^{\infty}$, and also $L(x) \geq 0$ if $x \geq 0$ (i.e., $x_n \geq 0$ for every $x$). Using the Hahn-Banach Theorem, these properties of the limit functional can be extended to $\ell^{\infty}$.

\begin{thmA}\label{Thm A Banach limit}
There is a linear functional $L : \ell^{\infty} \to \C$
such that
\begin{enumerate}
    \item[\tn{(a)}] $\norm{L} = 1$;
    \item[\tn{(b)}] $L(x) = \lim x_n$ for every $x \in c$;
    \item[\tn{(c)}] $L(x) \geq 0$ for every $x \in \ell^\infty$ such that $x \geq 0$;
    \item[\tn{(d)}] $L(x') = L(x)$ if $x \in \ell^\infty$ and $x' = \{ x_n \}_{n=2}^{\infty}$;
    \item[\tn{(e)}] $\liminf x_n \leq L(x) \leq \limsup x_n$ if $x \in \ell^\infty$ is a real sequence.
\end{enumerate}
\end{thmA}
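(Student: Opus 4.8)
The plan is to realize $L$ as a Hahn--Banach extension of the ordinary limit functional, dominated by a sublinear functional built from Ces\`aro averages, so that the shift invariance (d) is essentially automatic. I first treat real sequences. Writing $\ell^\infty_\R$ for the space of real bounded sequences, I would define $p:\ell^\infty_\R\to\R$ by
\[
p(x)=\limsup_{n\to\infty}\frac1n\sum_{j=1}^n x_j .
\]
The routine checks are that $p$ is sublinear (subadditivity from $\limsup(a_n+b_n)\le\limsup a_n+\limsup b_n$, positive homogeneity being clear), that $p$ coincides with the ordinary limit on the subspace $c$ of convergent sequences, and that the Ces\`aro average traps the extremes: $\liminf x_n\le -p(-x)$ and $p(x)\le\limsup x_n$.

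On $c$ the limit is linear and satisfies $\lim x_n=p(x)\le p(x)$, so the Hahn--Banach theorem produces a linear functional $L:\ell^\infty_\R\to\R$ with $L|_c=\lim$ and $L(x)\le p(x)$ for every $x$. Applying this also to $-x$ gives the two-sided estimate $-p(-x)\le L(x)\le p(x)$. Combined with the Ces\`aro bounds above, this chains into $\liminf x_n\le L(x)\le\limsup x_n$, which is (e); property (b) holds by construction, and (c) follows since $x\ge0$ forces $\liminf x_n\ge0$. For the norm (a) in the real case, (e) gives $\abs{L(x)}\le\norm{x}_\infty$, while $L$ sends the constant sequence $1$ to $1$, so $\norm{L}=1$.

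The delicate point is the shift invariance (d), and this is exactly where the Ces\`aro averaging, rather than a plain $\limsup x_n$, earns its keep. Writing $x'=\{x_n\}_{n=2}^\infty$ for the shifted sequence, I would compute
\[
\frac1n\sum_{j=1}^n\big(x_j-x'_j\big)=\frac1n\big(x_1-x_{n+1}\big)\xrightarrow[n\to\infty]{}0 ,
\]
so that $p(x-x')=p(x'-x)=0$. The domination $L\le p$ then yields $L(x-x')\le0$ and $L(x'-x)\le0$, hence $L(x')=L(x)$. Had I dominated instead by $\limsup x_n$, the quantity $p(x-x')$ would not vanish for oscillating $x$, and the argument would collapse; the averaging is what makes $x-x'$ ``negligible'' for $p$.

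Finally I would pass to complex sequences by setting $L(x):=L(\Re x)+iL(\Im x)$. Linearity, together with (b), (c) and (d), transfers coordinatewise from the real case. For (a) in the complex setting I would use the standard rotation trick: choosing $\theta$ with $e^{-i\theta}L(x)=\abs{L(x)}$ reduces $\abs{L(x)}=L\big(\Re(e^{-i\theta}x)\big)$ to the already established real bound, giving $\abs{L(x)}\le\norm{x}_\infty$, and hence $\norm{L}=1$ since $L$ fixes the value $1$ on the constant sequence.
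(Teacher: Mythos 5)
Your proof is correct, but it takes a genuinely different route from the paper's. The paper does not construct $L$ at all: it cites Conway (Theorem III.7.1 of \emph{A Course in Functional Analysis}) for properties (a)--(d) and then notes that (e) is an easy consequence of those four --- for a real $x\in\ell^\infty$ and $\epsilon>0$ one discards finitely many terms using (d), applies (c) and (b) to the nonnegative sequence $(\limsup x_n+\epsilon)\mathbf{1}-x$, and argues symmetrically for the liminf. You instead build $L$ from scratch via Hahn--Banach, dominating the limit functional on $c$ by the Ces\`aro-type sublinear functional $p(x)=\limsup_n \frac1n\sum_{j\le n}x_j$. All your steps check out: the telescoping computation showing $p(x-x')=p(x'-x)=0$, which turns the domination $-p(-y)\le L(y)\le p(y)$ into shift invariance; the trapping of $p$ between $\liminf$ and $\limsup$, which yields (e) and hence (c) and the real-case norm bound; and the complexification with the rotation trick for $\norm{L}=1$. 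What your argument buys is self-containedness --- a complete elementary proof in place of a citation --- and it makes (e) intrinsic to the construction, coming straight from the sublinear bound rather than being derived afterwards. What the paper's route buys, besides brevity, is a slightly stronger observation: its derivation shows (e) holds automatically for \emph{any} functional satisfying (a)--(d), whereas your construction establishes (e) only for the particular extension Hahn--Banach produces (which is all that is needed, since the paper only ever fixes one Banach limit).
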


\begin{proof}
Statements (a)-(d) are contained in \cite[Theorem III.7.1]{Con90} and assertion (e) is their easy consequence (and it is also standard).
\end{proof}

A functional $L$ with the above properties is called a Banach limit.

\begin{proof}[Proof of Lemma \ref{lem-defect}]
We remark first that by a lemma by Gamal \cite[Lemma 2.1]{Gam11}, for any power bounded operator $T$,
one has
\begin{equation}
\label{gamal}
\liminf_{n\to\infty} \|T^n h\|^2 \asymp
\limsup_{n\to\infty} \|T^n h\|^2, \quad h\in H
\end{equation}
(we say that two quantities $A,B$,
depending on $h$ or some other parameter, are comparable and write $A \asymp B$
if there are two positive constants $c,C$ such that $c A \leq B \leq C A$).

Suppose first that there exists a linear isomorphism $W \in L(H)$
with the properties stated in Lemma.
Since $\wt{T} = WTW^{-1} \in L(H)$ is a contraction,
by \eqref{NF-identity}
we have
\[
\norm{h}^2 = \sum_{n=0}^{\infty} \norm{D_{\wt{T}} \wt{T}^n h}^2 + \lim_{n\to \infty} \norm{\wt{T}^n h}^2.
\]
Note that $\wt{T}^n = WT^n W^{-1}$, and thus
\[
\norm{h}^2 = \sum_{n=0}^{\infty} \norm{D_{\wt{T}} WT^nW^{-1} h}^2 + \lim_{n\to \infty} \norm{WT^nW^{-1} h}^2.
\]
Since $W$ is invertible and $\norm{Dh} = \norm{D_{\wt{T}} Wh}$, we get
\begin{equation}\label{eq. equivalent norms}
\norm{h}^2 \asymp \norm{Wh}^2 = \sum_{n=0}^{\infty} \norm{DT^n h}^2 + \lim_{n\to \infty} \norm{WT^n h}^2.
\end{equation}

By \eqref{gamal}, $\lim_{n\to \infty} \norm{WT^n h}^2 \asymp \limsup_{n\to \infty} \norm{T^n h}^2$. We
deduce that $D$ is an abstract defect operator for $T$.

Conversely, suppose now that $D$ is an abstract defect operator for $T$. Fix a Banach limit $L$
and put

\begin{equation}\label{eq. new norm sum banach limit}
\Norm{h}^2 := \sum_{n=0}^{\infty} \norm{DT^nh}^2 + L\big(\{\norm{T^nh}^2\}\big).
\end{equation}

Notice that \eqref{gamal} and Theorem~A (e) give that
\[
\limsup_{n\to\infty} \|T^n h\|^2 \asymp
L\, (\{\|T^nh\|^2\}),
 \quad h\in H.
\]
The relation \eqref{def-abstr-def-oper} implies that $\Norm{h}\asymp \|h\|$, $h\in H$.
It follows that $\Norm{\cdot}$ is an equivalent Banach space norm on $H$.
By applying the Cauchy-Schwarz inequality, it is easy to see that
\[
[x,y] := \sum_{n=0}^{\infty} \langle DT^nx, DT^ny\rangle + L \big(\{\langle T^nx,T^ny \rangle\}\big)
\]
absolutely converges for any $x,y\in H$. It is a semi-inner product on $H$,
which induces the norm $\Norm{\cdot}$. So, in fact, $\Norm{\cdot}$ is
a Hilbert space norm equivalent to $\norm{\cdot}$.
(see \cite{Ker89} and \cite{Nagy47} for a similar argument).

Therefore there exists a linear isomorphism $W : H \to H$ such that $\norm{Wh} = \Norm{h}$. Observe that
\[
\Norm{Th}^2 = \Norm{h}^2 - \norm{Dh}^2 \leq \Norm{h}^2.
\]
Let $\wt{T} := WTW^{-1} \in L(H)$ (similar to $T$). Take $x \in H$ and put $h := W^{-1}x$. We get
\[
\norm{WTh} \leq \norm{Wh}
\]
so $\wt{T}$ is a contraction. Since $\norm{Dh}^2 = \Norm{h}^2 - \Norm{Th}^2$ and
\[
\norm{D_{\wt{T}}Wh}^2 = \norm{Wh}^2 - \norm{\wt{T}Wh}^2 = \Norm{h}^2 - \Norm{Th}^2,
\]
we get $\norm{Dh} = \norm{D_{\wt{T}}Wh}$ for every $h \in H$.
\end{proof}

%

\

Let $\al$ be an admissible function and let $T \in \mathcal{C}_\al$.
We know already that $T$ is similar to a contraction. Since
$\al \in A_W$,
by Lemma \ref{L2 B 27 GL},
there exists a function $\wt{\be} \in A_W$ such that $\wt{\be} \succ 0$
and $f := \wt{\be}\wt{\al} \succ 0$. Hence $(1-t)f(t) = \wt{\be}(t) \al(t)$.
Set
\[
B:= (f[T^*,T])^{1/2},
\]
where the positive square root has been taken. Then $B > \ep I$ for some $\ep>0$.
We will assume, without loss of generality, that $\sum f_k = \|f\|_{A_W}=1$.
We put
\begin{equation}
\label{eqn-defect-op-D}
D := (\alpha[T^*,T])^{1/2}.
\end{equation}
\begin{thm}\label{thm new norm}
If $T \in \mathcal{C}_\al$ for some admissible function $\al \in A_W$, then $D$
is an abstract defect operator for $T$. More specifically, if
$\wt{\be},f$ and $B$ are as above, then the expression
\begin{equation}\label{eq. new norm}
\Norm{h}^2 := \sum_{n=0}^{\infty} \norm{DT^n h}^2  + \lim_{n \to \infty} \norm{BT^nh}^2
\end{equation}
defines an equivalent Hilbert space norm in $H$ and $T$ is a contraction with
respect to this norm. In particular, the limit in \eqref{eq. new norm} exists for every $h \in H$. Moreover,
\begin{equation}
\label{eq. new norm eq defect}
\Norm{h}^2 - \Norm{Th}^2 = \norm{Dh}^2 \quad (\forall h \in H).
\end{equation}
\end{thm}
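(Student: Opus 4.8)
The plan is to work directly with the expression \eqref{eq. new norm} and to show that it is a Hilbert-space norm equivalent to $\norm{\cdot}$, deducing the remaining assertions from there. The starting point is the operator identity already obtained in the proof of Theorem~\ref{T1 B 27 GL}, namely \eqref{des-operadores}, which in terms of $B$ and $D$ reads
\[
B^2 - T^*B^2T = \sum_{n=0}^\infty \wt{\be}_n\, T^{*n} D^2 T^n \ge 0
\]
(here $B^2 = f[T^*,T]$ and $D^2 = \al[T^*,T]$). Since $\wt{\be}\succ 0$, all coefficients $\wt{\be}_n$ are nonnegative and $\wt{\be}_0>0$; in particular $T^*B^2T \le B^2$, so that for each $h\in H$ the sequence $\{\norm{BT^nh}^2\}_n$ is nonincreasing. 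Being bounded below by $0$, it converges, which already gives the existence of the limit in \eqref{eq. new norm}.

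Next I would telescope this inequality. Summing the increments $\norm{BT^nh}^2-\norm{BT^{n+1}h}^2 = \langle (B^2-T^*B^2T)T^nh, T^nh\rangle = \sum_m \wt{\be}_m \norm{DT^{m+n}h}^2$ over $n$ and keeping only the nonnegative $m=0$ term gives
\[
\wt{\be}_0 \sum_{n=0}^N \norm{DT^nh}^2 \le \norm{Bh}^2 - \norm{BT^{N+1}h}^2 \le \norm{Bh}^2 .
\]
Letting $N\to\infty$ shows that $\sum_n \norm{DT^nh}^2$ converges and is $\lesssim \norm{h}^2$, whence the upper bound $\Norm{h}^2 \lesssim \norm{h}^2$ (using also $\lim_n\norm{BT^nh}^2 \le \norm{Bh}^2$). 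For the lower bound I would let $N\to\infty$ in the full telescoped identity and rearrange the nonnegative double sum as $\sum_k \big(\sum_{m\le k}\wt{\be}_m\big)\norm{DT^kh}^2 \le \norm{\wt{\be}}_{A_W}\sum_k \norm{DT^kh}^2$, obtaining $\norm{Bh}^2 \le \max(1,\norm{\wt{\be}}_{A_W})\,\Norm{h}^2$; since $B>\ep I$, this yields $\Norm{h}^2 \gtrsim \ep^2\norm{h}^2$. Thus $\Norm{\cdot}\asymp\norm{\cdot}$.

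To see that $\Norm{\cdot}$ is a Hilbert-space norm, I would note that by polarization the limits $\lim_n \langle BT^nx, BT^ny\rangle$ exist, and that (exactly as in the proof of Lemma~\ref{lem-defect}) the expression $[x,y]:=\sum_n \langle DT^nx, DT^ny\rangle + \lim_n\langle BT^nx,BT^ny\rangle$ converges absolutely and defines an inner product inducing $\Norm{\cdot}$; equivalence with $\norm{\cdot}$ then makes it complete. The defect relation \eqref{eq. new norm eq defect} is purely formal: shifting the index in both the sum and the limit gives $\Norm{Th}^2 = \sum_{n\ge1}\norm{DT^nh}^2 + \lim_n\norm{BT^{n+1}h}^2$, and subtracting from $\Norm{h}^2$ leaves exactly $\norm{Dh}^2$ (the limit terms cancel, the shifted sequence having the same limit). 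In particular $\Norm{Th}\le\Norm{h}$, so $T$ is a contraction for $\Norm{\cdot}$. Finally, because $\ep I < B \le \norm{B}I$ we have $\norm{BT^nh}^2 \asymp \norm{T^nh}^2$ uniformly in $n$, and since $\lim_n\norm{BT^nh}^2$ exists it equals $\limsup_n\norm{BT^nh}^2 \asymp \limsup_n\norm{T^nh}^2$; combined with $\Norm{h}^2\asymp\norm{h}^2$ this gives the two-sided bound \eqref{def-abstr-def-oper}, so $D$ is an abstract defect operator for $T$ (which is power bounded, being similar to a contraction by Theorem~\ref{T1 B 27 GL}).

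The main obstacle --- and the genuinely new feature compared with Lemma~\ref{lem-defect} --- is establishing that the limit $\lim_n\norm{BT^nh}^2$ exists rather than merely a Banach limit; this is precisely what the monotonicity coming from \eqref{des-operadores} provides. The second delicate point is the two-sided norm equivalence: the upper bound relies on $\wt{\be}_0>0$ to make $\sum_n\norm{DT^nh}^2$ summable, while the lower bound relies on the strict positivity $B>\ep I$ furnished by the factorization lemma.
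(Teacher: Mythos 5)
Your proof is correct and follows essentially the same route as the paper's: both start from the identity $B^2 - T^*B^2T = \sum_{n}\wt{\be}_n T^{*n}D^2T^n$ coming from $(1-t)f=\wt{\be}\al$, telescope it, and recover the identity $\norm{Bh}^2=\sum_n\be_n\norm{DT^nh}^2+\lim_N\norm{BT^Nh}^2$ (the paper's \eqref{NF-ident-Bh2}), from which the norm equivalence, the defect relation \eqref{eq. new norm eq defect}, and the abstract-defect-operator property all follow. The only difference is organizational and slightly cleaner on your side: you obtain the existence of $\lim_n\norm{BT^nh}^2$ upfront from the monotonicity of the sequence $\{\norm{BT^nh}^2\}$ (using $\wt{\be}\succcurlyeq 0$), whereas the paper extracts it at the end by letting $N\to\infty$ in the telescoped identity \eqref{eq. 2n} after showing the remaining terms converge.
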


\begin{proof}
Since $(1-t)f(t) = \wt{\be}(t) \al(t)$, we have
\[
B^2 - T^* B^2T = \sum_{n=0}^{\infty} \tilde{\beta}_n T^{*n}D^2
T^n.
\]
Therefore, for every $h \in
H$ we have
\[
\norm{B h}^2 - \norm{B Th}^2 = \sum_{n=0}^{\infty} \tilde{\beta}_n
\norm{DT^n h}^2.
\]
Changing $h$ by $T^j h$ we obtain
\[
\norm{B T^j h}^2 - \norm{B T^{j+1}h}^2 = \sum_{n=0}^{\infty}
\tilde{\beta}_n \norm{DT^{n+j} h}^2,
\]
for every $j \ge 0$. Summing these equations for $j = 0, 1,
\ldots , N-1$ we obtain
\begin{equation}\label{eq. 2n}
\norm{B h}^2 - \norm{B T^N h}^2 = \sum_{n=0}^{N-1} \beta_n
\norm{DT^n h}^2 + \sum_{n=N}^{\infty} \left( \sum_{j=n-N+1}^{n}
\tilde{\beta_j}  \right) \norm{D T^n h}^2,
\end{equation}
where $\be_n= \sum_{j=0}^{n} \wt{\be}_j$. In particular,
$0<\wt{\be}_0\le \be_n$ and therefore by \eqref{eq. 2n} we get
\[
\norm{B h}^2 \geq \sum_{n=0}^{N-1} \beta_n \norm{DT^n h}^2 \geq \wt{\be}_0 \sum_{n=0}^{N-1} \norm{DT^n h}^2.
\]
Hence the series $\sum_{n=0}^{\infty} \norm{DT^n h}^2$ converges. On the other hand, since
\[
\sum_{j=n-N+1}^{n}\tilde{\beta_j} \leq \sum_{j=0}^{\infty}\tilde{\beta_j} = \norm{\wt{\be}}_{A_W} < \infty,
\]
we obtain that
\[
\sum_{n=N}^{\infty} \left( \sum_{j=n-N+1}^{n}\tilde{\beta_j}  \right) \norm{D T^n h}^2 \leq \norm{\wt{\be}}_{A_W}
\sum_{n=N}^{\infty} \norm{D T^n h}^2 \to 0
\]
when $N \to \infty$. Therefore, taking limit in \eqref{eq. 2n} when $N$ goes to infinity  we obtain that
\begin{equation}
\label{NF-ident-Bh2}
\norm{B h}^2 =
\sum_{n=0}^{\infty} \be_n \norm{DT^n h}^2 + \lim_{N \to \infty} \norm{B T^N h}^2
\end{equation}
(and the limit in the right hand side exists for any $h$).
Since $\wt{\be}_0\le \be_n \le \|\wt\be \|_{A_W}<\infty$, it follows that
$\Norm{h}$ defines an equivalent Hilbert space norm on $H$. Formula \eqref{eq. new norm eq defect}
is immediate from \eqref{eq. new norm}.
\end{proof}

\begin{rem}
Notice that Theorems~\ref{T1 B 27 GL} and~\ref{thm new norm} give two methods of finding an equivalent
norm such that $T$ is a contraction in this norm. With the method of Theorem~\ref{thm new norm} we
obtained in addition that $D$ is an abstract defect operator.
\end{rem}

%

\begin{rem}
\label{rem-contr}
Assume that
$\al(t)\ne 0$ for $t\in \overline\D$, $t\ne 1$, and
$1/\wt\al$ has positive Taylor coefficients at the origin.
Then in the above calculations, we could set
$f(t)\equiv 1$, so that $B=I$ and
$\wt \be=1/\wt\al\succ 0$.
In this case,
formula~\eqref{NF-ident-Bh2}
yields a unitarily equivalent (coanalytic) model of $T$, which represent it as a part of an operator
$\bigoplus_{j=1}^\infty M^*\oplus U$, where $M$ is a weighted shift and $U$ is unitary.
In this situation, there is no need
to assume that $\al$ is admissible and one can deal with more general functions.
For the case when $\al$ is a polynomial, this a result by M\"uller \cite[Theorem 3.10]{Mul88}.
Most general result of this kind was given recently
by Olofsson, see Theorem 6.6 in \cite{Olo15}.
On the other hand, in this case $\wt \be\succ 0$, so that $\{\be_n\}$ is an increasing sequence and
therefore the backward shift $M^*$ has to be a contraction.

In fact, in this setting, one has not
to assume that $\al$ is admissible and can deal with more general functions. However,
in both results cited above, $T$ has to be a contraction.
Theorem~\ref{thm new norm} requires $\al$ to be admissible, but applies
to operators $T$ that are not contractions.
\marginp{!!!!!!!!!}
\end{rem}
\begin{rem}\label{remark on lim*}
Using the notation above, note that
\[
\norm{BT^nh}^2 = \sum_{k=0}^{\infty} f_k \left\langle T^{n+k}h, T^{n+k}h \right\rangle.
\]
So if we put
\begin{equation}
\label{lim-st}
\tn{lim}^* x_n =
\lim_{n \to \infty} \sum_{k=0}^{\infty} f_k x_{n+k},
\end{equation}
then \eqref{eq. new norm} can be written as
\begin{equation}
\label{eq-lim-st}
\Norm{h}^2 = \sum_{n=0}^{\infty} \norm{DT^n h}^2 +
\tn{lim}^*_{n \to \infty} \norm{T^nh}^2.
\end{equation}
So, on the contrary to the general formula \eqref{eq. new norm sum banach limit}, the use of a
Banach limit is unnecessary in the context of an operator $T$ in the class $\cC_\al$, and one can
use instead the ``regularized'' limit $\tn{lim}^*$. We observe that $\tn{lim}^*$ coincides with the usual limit when
the sequence is convergent (here we use the above normalization assumption that $\sum f_k = 1$).

On the other hand, the example
of matrices $T$, meeting the requirements of Lemma~\ref{matrix without Jordan blocks},
shows that in general, the limit $\lim \norm{T^n h}^2$ does not exist for $T \in \cC_\al$. In
Section~\ref{sec-existence-limit},
we will show that this limit does exist if $\al$ satisfies an extra requirement.
\end{rem}

In what follows, for $T\in \cC_\al$, the operator
$D$, given by~\eqref{eqn-defect-op-D}, will be called
\emph{the defect operator of} $T$.


\section{On definition of unitary part of a $\cC_\al$ operator}
\label{sec-unitary}


\begin{defi}
Let $K$ be a Hilbert space. We say that $T \in L(K)$ is a
\emph{completely nonunitary operator} if there is no nonzero
reducing subspace $L$ for $T$ such that $T | L$ is unitary.
\end{defi}

It is well-known that every contraction $S$ can be decomposed into an
orthogonal sum of a  unitary operator and a completely nonunitary
operator (called the \emph{unitary part} and the \emph{completely
nonunitary part} of $S$, respectively).
We recall that the standard construction of the Nagy-Foias model
applies only to completely nonunitary contractions.

If one takes an operator
$T$ in the class $\mathcal{C}_\al$ and applies to
it Theorem~\ref{thm new norm},
then one gets a \emph{direct sum} decomposition
\begin{equation}\label{eq. H = H0 + H1}
H=H_0\dotplus H_1
\end{equation}
such that
$T|H_0$ is similar to a unitary operator and
$T|H_1$ is similar to a completely non-unitary operator.

For a general admissible function $\alpha$, we cannot say much more.
%
%
However, some extra properties hold if $\al$ is in following subclass.

\begin{defi}
A function $\al \in A_W$ will be called
\emph{strongly admissible} if it has the form $\al(t) = (1-t) \wt{\al}(t)$ for
some function $\wt{\al} \in A_W$ with real Taylor coefficients, which
has no roots on the unit circle $\T$
and satisfies $\wt{\al}(0)=\al(0)>0$.
\end{defi}

Notice that any strongly admissible function is admissible.

For the sequel, let us recall the following characterization of the unitary
part of a contraction.

\begin{thmB}[See \cite{NFBK10}, Theorem I.3.2]
To every contraction $S$ on the space $H$ there corresponds a
decomposition of $H$ into an orthogonal sum of two subspaces
reducing $S$, say $H = H_0 \oplus H_1$, such that the part of $S$
on $H_0$ is unitary, and the part of $S$ on $H_1$ is completely
nonunitary; $H_0$ or $H_1$ may equal the trivial subspace $\{ 0
\}$. This decomposition is uniquely determined. Indeed, $H_0$
consists of those elements $h$ of $H$ for which
\[
\norm{S^n h} = \norm{h} = \norm{S^{*n}h} \quad \quad (n=1,2,
\ldots).
\]
$S_0 = S | H_0$ and $S_1 = S | H_1$ are called the unitary part
and the completely nonunitary part of $S$, respectively, and $S =
S_0 \oplus S_1$ is called the canonical decomposition of $S$.
\end{thmB}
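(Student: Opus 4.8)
The plan is to prove this canonical decomposition (the classical Sz.-Nagy--Foias splitting) by exhibiting $H_0$ explicitly as the set described in the statement,
\[
H_0 := \{ h \in H : \norm{S^n h} = \norm{h} = \norm{S^{*n} h} \text{ for all } n \geq 1 \},
\]
and then verifying, in order, that $H_0$ is a closed subspace, that it \emph{reduces} $S$, and that $S$ acts unitarily on it. The completely nonunitary part will fall out of $H_1 := H_0^\perp$, and uniqueness will follow once $H_0$ is characterized as the largest reducing subspace on which $S$ is unitary.

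The basic computational input is that, for a contraction, $\norm{Sh} = \norm{h}$ is equivalent to $S^*Sh = h$ (since $\norm{h}^2 - \norm{Sh}^2 = \langle (I - S^*S)h, h \rangle$ and $I - S^*S \geq 0$), and dually $\norm{S^*h} = \norm{h}$ iff $SS^*h = h$. Because $\norm{S^n h}$ is nonincreasing, $h$ satisfies $\norm{S^n h} = \norm{h}$ for all $n$ precisely when $S^k h \in \ker(I - S^*S)$ for every $k \geq 0$. Writing $H_0^+ = \bigcap_{k \geq 0} S^{-k}\ker(I - S^*S)$ and the analogous $H_0^-$ for $S^*$, each is closed as an intersection of preimages of a closed subspace, and $H_0 = H_0^+ \cap H_0^-$.

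The main step, and the one I expect to be the real obstacle, is showing that $H_0$ reduces $S$, that is, is invariant under both $S$ and $S^*$. For $h \in H_0$ the inclusion $Sh \in H_0^+$ is immediate, since $\norm{S^n(Sh)} = \norm{S^{n+1}h} = \norm{h} = \norm{Sh}$; the delicate point is controlling the $S^*$-orbit of $Sh$. Here the clean identity that unlocks everything is that for $h \in H_0$ one has $S^*Sh = h$ (from $h \in H_0^+$) and $SS^*h = h$ (from $h \in H_0^-$); in particular $S^*(Sh) = h$, so $S^{*n}(Sh) = S^{*(n-1)}h$ for $n \geq 1$ and hence $\norm{S^{*n}(Sh)} = \norm{h} = \norm{Sh}$. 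This gives $Sh \in H_0$, and the symmetric argument (interchanging $S$ and $S^*$) yields $S^*h \in H_0$.

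Once $H_0$ reduces $S$, the relations $S^*S = I$ and $SS^* = I$ on $H_0$ show that $S|H_0$ is unitary, and $H_1 = H_0^\perp$ automatically reduces $S$. To see that $S|H_1$ is completely nonunitary, I argue by contradiction: any nonzero reducing subspace $L \subseteq H_1$ with $S|L$ unitary would satisfy $\norm{S^n h} = \norm{h} = \norm{S^{*n}h}$ for $h \in L$ (using $S^*|L = (S|L)^{-1}$), forcing $L \subseteq H_0 \cap H_1 = \{0\}$. The same computation shows that every reducing subspace on which $S$ is unitary lies in $H_0$, so $H_0$ is maximal with this property. Uniqueness then follows: for any competing decomposition $H = H_0' \oplus H_1'$ of the stated type, maximality gives $H_0' \subseteq H_0$, whereupon $H_0 \ominus H_0'$ is a reducing subspace on which $S$ is unitary that sits inside the completely nonunitary part $H_1'$, hence is trivial, so $H_0 = H_0'$ and $H_1 = H_1'$.
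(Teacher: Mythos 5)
Your proof is correct and complete, but there is nothing in the paper to compare it against: Theorem B is not proved in the paper at all, it is quoted, statement and characterization of $H_0$ included, from \cite{NFBK10} (Theorem I.3.2). What you have reconstructed is essentially the classical textbook argument. Your three pillars are exactly the standard ones: (1) for a contraction, $\norm{Sh}=\norm{h}$ if and only if $S^*Sh=h$ (positivity of $I-S^*S$), which turns the defining norm equalities into the conditions $S^kh\in\ker(I-S^*S)$ and $S^{*k}h\in\ker(I-SS^*)$ for all $k\ge 0$ and thus makes linearity and closedness of $H_0$ automatic; (2) the identities $S^*Sh=h$ and $SS^*h=h$ on $H_0$, which let you fold the $S^*$-orbit of $Sh$ back onto the $S^*$-orbit of $h$ and so give the reducing property; and (3) maximality of $H_0$ among reducing subspaces with unitary restriction, which yields uniqueness of the decomposition. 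It is worth noting that ingredient (1) is precisely the device the paper itself uses nearby: in Claim 1 of the proof of Theorem \ref{Theorem 4}, the equality $\Norm{\wT^n h_{-n}}=\Norm{h_{-n}}$ is upgraded to $(I-\wT^{*n}\wT^n)h_{-n}=0$ by the same positivity argument. So your argument is not a different route from anything internal to the paper; it simply supplies, correctly and in the same spirit as the paper's own auxiliary claims, the proof that the paper outsources to the literature.
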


The next theorem is an analogue of this decomposition
for our operators in the case of a strongly admissible
function $\alpha$.

\begin{thm}\label{Theorem 4}
Let $T \in \mathcal{C_\alpha}$, where $\alpha$ is strongly admissible.
Denote by $H_0$ the elements $h
\in H$ for which there exists a two-sided sequence $\{ h_n \}_{n
\in \Z}$ such that $h_0 = h, Th_n = h_{n+1}$ and $\norm{h_n} =
\norm{h}$ for every $n \in \Z$. Let $\wT$ be the operator $T$
acting on $\wH := (H, \Norm{\cdot})$ (the new norm, which was given in
\eqref{eq. new norm}). Then $H_0$ is a closed subspace of $H$ and there
exists a direct sum decomposition $H = H_0 \dotplus H_1$ with the
following properties:
\begin{enumerate}[label= \textnormal{(\roman*)}]
    \item $T | H_0$ is unitary;
    \item $H_0$ and $H_1$ are invariant subspaces of $T$;
    \item $H_0$ and $H_1$ are orthogonal in $\wH$;
    \item $\wT | H_0$ and $\wT | H_1$ are the unitary part and the completely nonunitary part
    of the contraction $\wT$, respectively.
\end{enumerate}
\end{thm}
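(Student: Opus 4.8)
The plan is to reduce everything to the Nagy--Foias canonical decomposition of the contraction $\wT$ on $\wH$ (Theorem~B), and to identify the set $H_0$ with the unitary part of $\wT$. Write $\wH=\wH_0\oplus\wH_1$ for that decomposition, where $\wH_0$ is the unitary part. I would prove $H_0=\wH_0$ and then simply set $H_1:=\wH_1$. Granting this identification, properties (ii)--(iv) are immediate: $\wH_0,\wH_1$ reduce $\wT=T$, they are $\Norm{\cdot}$-orthogonal, and $\wT|\wH_0$, $\wT|\wH_1$ are by definition the unitary and completely nonunitary parts of $\wT$; since $\Norm{\cdot}\asymp\norm{\cdot}$ by Theorem~\ref{thm new norm}, both subspaces are closed in $H$ and $H=H_0\dotplus H_1$ is a (topological) direct sum. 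Property (i) then follows from the orbit description: if $h\in H_0$ with orbit $\{h_n\}$, then $Th=h_1\in H_0$ with $\norm{h_1}=\norm{h}$, so $T|H_0$ is an isometry in the original norm, while $h=Th_{-1}$ with $h_{-1}\in H_0$ shows it is onto; hence $T|H_0$ is unitary.

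For the inclusion $H_0\subseteq\wH_0$, let $h\in H_0$ with a two-sided orbit $\{h_n\}$, $\norm{h_n}=\norm{h}$. Since $\al$ is admissible, $\al(1)=\sum_n\al_n=0$, so
\[
\norm{Dh_n}^2=\langle\al[T^*,T]h_n,h_n\rangle=\sum_m\al_m\norm{T^m h_n}^2=\al(1)\norm{h}^2=0.
\]
By the defect identity \eqref{eq. new norm eq defect}, $\Norm{h_{n+1}}=\Norm{Th_n}=\Norm{h_n}$, so $\Norm{\cdot}$ is constant along the orbit. As $\wT=T$, the sequence $\{h_n\}$ is a two-sided $\wT$-orbit of constant $\Norm{\cdot}$-norm; since $\wT$ is a contraction, $\Norm{\wT x}=\Norm{x}$ forces $\wT^*\wT x=x$, whence $\wT^{*n}h=h_{-n}$ and therefore $\Norm{\wT^n h}=\Norm{\wT^{*n}h}=\Norm{h}$ for all $n$. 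By the characterization in Theorem~B, $h\in\wH_0$.

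The main obstacle is the reverse inclusion $\wH_0\subseteq H_0$, and this is exactly where strong admissibility is used. Let $U:=\wT|\wH_0$, a unitary operator on $(\wH_0,\Norm{\cdot})$. Since $\wT|\wH_0$ is unitary and $\wH_0$ reduces $\wT$, the identity \eqref{eq. new norm eq defect} gives $D\equiv0$ on $\wH_0$, hence $DU^n h=0$ for every $n\in\Z$. Using the factorization $\al(t)=(1-t)\wt\al(t)$ and Lemma~\ref{L3 B 27 GL}, one has $D^2=\al[T^*,T]=\sum_m\wt\al_m\bigl(T^{*m}T^m-T^{*(m+1)}T^{m+1}\bigr)$, so the relations $\langle D^2 U^n h,U^n h\rangle=0$ become, with $c_j:=\norm{U^j h}^2$ (a bounded sequence, since $\norm{\cdot}\asymp\Norm{\cdot}$ and $\Norm{U^j h}=\Norm{h}$) and $d_j:=c_j-c_{j+1}$,
\[
\sum_{m\ge0}\wt\al_m\,d_{n+m}=0\qquad(n\in\Z).
\]

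I would finish by inverting this convolution equation, which is the crux. Writing $V$ for the forward bilateral shift on $\ell^\infty(\Z)$, the equation reads $\wt\al(V)d=0$. Since $\al$ is strongly admissible, $\wt\al$ has no zeros on $\T$, so by Wiener's theorem $1/\wt\al$ belongs to the (two-sided) Wiener algebra on $\T$; the associated convolution operator $v(V)$ is then bounded on $\ell^\infty(\Z)$ and satisfies $v(V)\wt\al(V)=I$. Hence $d=0$, i.e.\ $\norm{U^j h}$ is constant in $j$. Taking $h_n:=U^n h$ exhibits $h\in H_0$, which gives $\wH_0\subseteq H_0$ and completes the proof. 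The delicate point throughout is that $\Norm{\cdot}$-unitarity only controls the \emph{new} norm along orbits, and passing to constancy of the \emph{original} norm genuinely requires the absence of unimodular zeros of $\wt\al$ via the $\ell^\infty$ inversion above.
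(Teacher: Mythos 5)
Your proposal is correct, and its skeleton coincides with the paper's: both invoke Theorem~B for the contraction $\wT$ on $\wH$, identify the orbit-defined set $H_0$ with the unitary part $\wH_0$, set $H_1:=\wH_1$, and then read off (i)--(iv). Your two inclusions also use the same basic ingredients as the paper's Claims 1 and 2, namely the defect identity \eqref{eq. new norm eq defect} and the computation of $\norm{Dh_n}^2$ as a convolution of $\al$ against the squared norms along the orbit. The genuine difference is in the crux, i.e.\ passing from $D$-annihilation along a two-sided orbit to constancy of the original norm. The paper stays inside the analytic Wiener algebra $A_W$, whose Gelfand space is $\overline{\D}$: it factors $\wt{\al}=q\gamma$ with $q$ a polynomial collecting the (finitely many) zeros of $\wt{\al}$ in $\D$ and $\gamma$ zero-free on $\overline{\D}$, inverts $\gamma(\nabla)$ there, and then disposes of $q(\nabla)\bfg=\mathbf{0}$ by an elementary growth argument plus induction on the roots of $q$ (a bounded two-sided solution of $(1-a\nabla)\bfg=\mathbf{0}$ with $\abs{a}>1$ must vanish). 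You instead regard $\nabla$ as the bilateral translation on $\ell^\infty(\Z)$, whose natural symbol algebra is the full Wiener algebra $W(\T)$ of the circle: strong admissibility says precisely that $\wt{\al}$ has no zeros on $\T$, so Wiener's $1/f$ theorem gives $1/\wt{\al}\in W(\T)$, and the unital homomorphism $f\mapsto f(\nabla)$ from $W(\T)$ into $L(\ell^\infty(\Z))$ inverts $\wt{\al}(\nabla)$ in one stroke, yielding $d=\mathbf{0}$ directly. Your choice of algebra matches the two-sided nature of the sequences, so the zeros of $\wt{\al}$ inside $\D$ (which force the paper's factorization and induction) are simply invisible: the spectrum of the bilateral translation is $\T$, not $\overline{\D}$. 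The price is invoking the two-sided Wiener lemma rather than only the one-sided inversion the paper already set up for Lemma~\ref{L2 B 27 GL}. A minor packaging difference: for $H_0\subseteq\wH_0$ you shortcut via $\al(1)=0$ to get $Dh_n=0$ outright, whereas the paper routes both inclusions through a single equivalence (its Claim~2); that is cosmetic, not substantive.
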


\begin{rem}
In Lemma \ref{matrix without Jordan blocks} we saw that if $T$ is a finite matrix without
Jordan blocks and $\sigma(T) \ss \T$, then $T \in C_p$ for some admissible polynomial $p$.
In this case, $H_0 = H$ and $H_1 = H$ in the decomposition \eqref{eq. H = H0 + H1}, but $T | H_0 = T$ is non-unitary,
as a rule. As a consequence we get:
\begin{enumerate}
    \item[(i)] Theorem \ref{Theorem 4} is not valid if we do require $\al$ to be strongly admissible;
    \item[(ii)] A non-unitary finite matrix $T$ with $\sigma(T) \ss \T$ cannot belong
to $\cC_\al$ if $\al$ is strongly admissible.
  \end{enumerate}
\end{rem}

Observe that a completely nonunitary contraction can be similar to a unitary operator. For
a general operator $T$, one cannot single out the largest
direct summand which is similar to a unitary.

\begin{proof}[Proof of Theorem \ref{Theorem 4}]
Since $\wT$ is a contraction on $\wH$, Theorem B gives us that for
the decomposition $\wH = \wH_0 \oplus \wH_1$, where $\wH_0$
consists of those elements $h$ of $\wH$ for which
\[
\Norm{\wT^n h} = \Norm{h} = \Norm{\wT^{*n}h} \quad \quad (n=1,2,
\ldots),
\]
we have that $\wT_0 := \wT | \wH_0$ is unitary and $\wT_1 := \wT |
\wH_1$ is completely nonunitary.

The goal of the following two claims is to prove that $H_0 =
\wH_0$.

\textbf{Claim 1.} \textit{Let $h \in H$. Then $h \in \wH_0$ if and
only in there exists a sequence $\{ h_n \}_{n \in \Z}$ such that
$h_0 = h, \wT h_n = h_{n+1}$ and $\Norm{h_n} = \Norm{h}$ for every
$n \in \Z$.}

(In fact, this claim is a variation of Theorem B and is valid for
any contraction $\wT$ on a Hilbert space.)

Indeed, suppose that $h \in \wH_0$. Define the sequence $\{ h_n
\}_{n \in \Z}$ by $h_0 := h, h_n := \wT^n h$ and $h_{-n} :=
\wT^{*n}h$, for $n \geq 1$. Since $\wT | \wH_0$ is unitary we
obtain that $\wT h_{-n} = \wT \wT^{*n}h = \wT^{*(n-1)} = h_{-n+1}$
for $n \geq 1$. It follows that the sequence $\{ h_n \}_{n \in
\Z}$ satisfies the conditions of the statement.

Reciprocally, suppose that a sequence $\{ h_n \}_{n \in \Z}$
satisfies the conditions of the statement. Fix $n \geq 1$. Since
$\wT^{n} h_{-n} = h$, we have that $\Norm{\wT^n h_{-n}}^2 =
\Norm{h}^2 = \Norm{h_{-n}}^2$. Hence $\langle (I-
\wT^{*n}\wT^n)h_{-n} , h_{-n} \rangle = 0$. But using that $\wT^n$
is a contraction on $\wH$ it follows that $(I-
\wT^{*n}\wT^n)h_{-n} = 0$. Therefore, using that $\wT^{n} h_{-n} =
h$, we obtain that $\wT^{*n} h = h_{-n}$. Then $h \in \wH_0$.
This finishes the proof of Claim 1.

\textbf{Claim 2.} \textit{Let $h \in H$. Then $h \in \wH_0$ if and
only in there exists a sequence $\{ h_n \}_{n \in \Z}$ such that
$h_0 = h, Th_n = h_{n+1}$ and $\norm{h_n} = \norm{h}$ for every $n
\in \Z$.}

(Note that this claim is stated in terms of the original norm in
$H$.)

Indeed, we apply Claim 1. Let $\{ h_n \}_{n \in \Z}$ be a sequence
such that $h_0 = h$ and $Th_n = h_{n+1}$.
We have to show that the sequence $\{\Norm{h_{n}}^2\}_{n\in \Z}$
is constant if and only if
the sequence $\{\norm{h_{n}}^2\}_{n\in \Z}$ is constant.
Since the two norms are equivalent, if either of these two sequences is constant,
the other is in $\ell^\infty(\Z)$.

By \eqref{eq. new norm eq defect} we have that
\[
\Norm{h_{n+1}}^2 = \Norm{T h_n}^2 = \Norm{h_n}^2 - \norm{Dh_n}^2.
\]
Therefore $\Norm{h_n} = \Norm{h}$ for every $n \in \Z$ if and only
if $\norm{Dh_n}^2 = 0$ for every $n \in \Z$.
We will use the backward shift operator $\nabla$,
acting on $\ell^\infty(\Z)$
\marginp{He agregado esto}
by $[\nabla a]_n = a_{n+1}$, $a\in\ell^\infty(\Z)$.
For any $f \in A_W$, $f(\nabla)$ is well-defined by
$\big(f(\nabla) a\big)_n:= \sum_{j=0}^{\infty} f_j a_{n+j}$.

Denote by
\marginp{Revisar!}
$\hh$ the sequence $\{ \norm{h_n}^2 \}_{n \in \Z}$. Then it
is easy to obtain that $\norm{Dh_n}^2 =
[\alpha(\nabla)\hh]_n$. Hence $\Norm{h_n} = \Norm{h}$ for
every $n \in \Z$ if and only if $\alpha(\nabla)\hh =
(\ldots, 0,0,0, \ldots) = \textbf{0}$.

So we have to show that $\hh$ is constant if and only if
$\alpha(\nabla) \hh = \textbf{0}$. The direct
implication is obvious. For the converse, fix a factorization $\alpha(t) = (1-t)
q(t) \gamma (t)$, where $q$ is a polynomial with zeroes in $\D$
and $\gamma \in A_W(\overline{\D})$ without zeroes in
$\overline{\D}$ (recall that $\al$ is assumed to be strongly admissible). Then
$1/\gamma \in A_W(\overline{\D})$.
Hence $\gamma(\nabla) [ q(\nabla) (1- \nabla)\hh] =
\textbf{0}$ implies that $ q(\nabla) (1- \nabla)\hh = \textbf{0}$
(just multiply by $1/\gamma$). Now let $\bfg =
(1-\nabla)\hh$. We want to proof that $\bfg =
\textbf{0}$. When $q$ has just a single root, say $q(t) = 1-at$
for some $a$ with $\abs{a} > 1$, the result is immediate.
For a general $q$
we just need to apply induction on the number of roots of $q$.
This finishes the proof of Claim 2.

Therefore we have that $H_0 = \wH_0$. Put $H_1 := \wH_1$. Now (i)
is obvious, since $T | H_0$ is a surjective isometry. The rest of
items of the theorem follow immediately using Theorem~B.
\end{proof}

\begin{rem}
In Lemma \ref{matrix without Jordan blocks} we saw that if $T$ is a finite matrix without
Jordan blocks and $\sigma(T) \ss \T$, then $T \in C_p$ for some admissible polynomial $p$.
In this case, $H_0 = H$ and $H_1 = H$ in the decomposition \eqref{eq. H = H0 + H1}, but $T | H_0 = T$ is non-unitary,
as a rule. As a consequence we get:
\begin{enumerate}
    \item[(i)] Theorem \ref{Theorem 4} is not valid if we do require $\al$ to be strongly admissible;
    \item[(ii)] A non-unitary finite matrix $T$ with $\sigma(T) \ss \T$ cannot belong
to $\cC_\al$ if $\al$ is strongly admissible.
  \end{enumerate}
\end{rem}


\section{The Nagy-Foia\c{s} model of $T$}
\label{sec-NF-model}
Let $T \in \mathcal{C}_\alpha$ for some $\alpha \in
A_W$.
\marginp{\textbf{Dejamos, de momento}}
In this section, for simplicity, we
will assume that $\al$ is strongly admissible and
$T$ is a completely nonunitary operator.


Then, using the notation of the previous section, we have that
$\widetilde{T}$ is a completely nonunitary contraction on
$\widetilde{H}$. Let $D_{\widetilde{T}}$ and $D_{\widetilde{T}^*}$
be the defect operators of $T$ and let
$\mathcal{D}_{\widetilde{T}}$ and $\mathcal{D}_{\widetilde{T}^*}$
be its defect spaces.  We recall that the defect operator
of $T$ has been defined by~\eqref{eqn-defect-op-D}. We define the
\emph{defect space of} $T$ as
$\mathcal{D}_T = \operatorname{clos} DH$, where the closure is taken with respect to
$\norm{\cdot}$.

Define $V : \mathcal{D}_{\widetilde{T}} \to \mathcal{D}_T$ by
$V(D_{\widetilde{T}}h) = Dh$, $h \in H$. Then, using equation
\eqref{eq. new norm eq defect}, we obtain that $V$ is an isometry from
$\mathcal{D}_{\tilde{T}}$ to $\mathcal{D}_T$. Let us define the
functions $\Theta_{*} \in H^{\infty}(\mathcal{D}_{\widetilde{T}^*}
\to \mathcal{D}_T)$ and $\Delta_* : \T \to
\mathcal{D}_{\widetilde{T}^*}$ given by
\[
\Theta_{*}(z)h := V (-\widetilde{T}^* +
zD_{\widetilde{T}}(I-zT)^{-1}D_{\widetilde{T}^*}) h, \quad h \in
\mathcal{D}_{\widetilde{T}^*},\; z\in \D,
\]
\[
\Delta_*(\zeta) := (I - \Theta_*(\zeta)^*\Theta_*(\zeta))^{1/2},
\quad \zeta \in \T.
\]
Now put
\[
\mathcal{K}_{\Theta_{*}} := { H^2(\mathcal{D}_T) \choose
\operatorname{clos} \Delta_* L^2(\mathcal{D}_{\widetilde{T}^*}) } \ominus
{ \Theta_* \choose \Delta_* } H^2(\mathcal{D}_{\widetilde{T}^*}).
\]
Finally, let us define $\Phi_1 : H \to H^2(\mathcal{D}_T)$ by
$\Phi_1 h(z) = D(I-zT)^{-1}h$ and $M_* : \mathcal{K}_{\Theta_{*}}
\to \mathcal{K}_{\Theta_{*}}$ by
\[
M_* {u \choose v} := {\frac{u(z) - u(0)}{z} \choose z^{-1} v(z)}.
\]

\marginp{Discusión de tipos de función caracteristica. Diferencias con \cite{Yak2004}}

\begin{thm}
Let $T\in\cC_\al$, where $\al$ is strongly admissible.
With the notation used above, there exists a linear map $\Phi_2 : H
\to \operatorname{clos} \Delta_* L^2(\mathcal{D}_{\widetilde{T}^*})$ such
that $\norm{\Phi_2 h(z)}^2 = \tn{lim}^*_{n\to \infty} \norm{T^n h}^2$
for every $h \in H$, and
\begin{enumerate}[label= \textnormal{(\roman*)}]
    \item $\Phi:= {\Phi_1 \choose \Phi_2} : H \to \mathcal{K}_{\Theta_{*}}$ is an isometric isomorphism;
    \item $\Phi T = M_* \Phi$.
\end{enumerate}
\end{thm}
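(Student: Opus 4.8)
The plan is to reduce everything to the classical Nagy--Foia\c{s} functional model of the completely nonunitary contraction $\wT$ acting on $\wH$, and then transport that model to the stated concrete form by means of the isometry $V$. The key point is that $T$ and $\wT$ are the \emph{same} linear map (only the norm on the underlying space has changed), so the resolvents $(I-zT)^{-1}$ and $(I-z\wT)^{-1}$ coincide; moreover $VD_{\wT}=D$ on $H$ and, by the isometry of $V$, one has $\Theta_*=V\,\Theta_{\wT^*}$ and $\Delta_*=(I-\Theta_*^*\Theta_*)^{1/2}=(I-\Theta_{\wT^*}^*\Theta_{\wT^*})^{1/2}$. Thus $\mathcal{K}_{\Theta_*}$ is the image of the standard $*$-model space $\mathcal{K}_{\Theta_{\wT^*}}$ under the unitary $\widehat V$ that acts coefficientwise by $V$ on the $H^2$-slot and by the identity on the $\operatorname{clos}\Delta_* L^2(\mathcal{D}_{\wT^*})$-slot.

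First I would record the explicit form of $\Phi_1$. Since $\Phi_1 h(z)=D(I-zT)^{-1}h=\sum_{n\ge0} z^n\,DT^n h$, one reads off $\norm{\Phi_1 h}_{H^2(\mathcal{D}_T)}^2=\sum_{n\ge0}\norm{DT^n h}^2$ and, shifting the index, $\Phi_1(Th)(z)=\big(\Phi_1 h(z)-\Phi_1 h(0)\big)/z$, which is exactly the first component of $M_*$. Comparing with the classical model, $\Phi_1 h=\widehat V\{D_{\wT}(I-z\wT)^{-1}h\}$ is precisely $\widehat V$ applied to the $H^2$-component of the canonical embedding of $\wH$ into $\mathcal{K}_{\Theta_{\wT^*}}$ (the embedding associated with the backward-shift, i.e.\ $*$-, model). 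I then \emph{define} $\Phi_2$ to be the residual ($L^2$) component of that same embedding, on which $\widehat V$ is the identity; it is automatically linear and maps into $\operatorname{clos}\Delta_* L^2(\mathcal{D}_{\wT^*})$.

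With these identifications, set $\Phi=\widehat V\circ U$, where $U:\wH\to\mathcal{K}_{\Theta_{\wT^*}}$ is the canonical Nagy--Foia\c{s} unitary for the c.n.u.\ contraction $\wT$, satisfying $U\wT=M_*U$. Then $\Phi=\binom{\Phi_1}{\Phi_2}$, and since $\widehat V$ is unitary onto $\mathcal{K}_{\Theta_*}$ and commutes with $M_*$ (the backward shift is $V$-equivariant and $\widehat V$ is the identity on the $L^2$-slot), assertion (i) follows from the unitarity of $U$ and assertion (ii) from $\Phi T=\widehat V U\wT=\widehat V M_* U=M_*\widehat V U=M_*\Phi$. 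Here ``isometric'' is understood with respect to the model norm $\Norm{\cdot}$; because $\Norm{\cdot}\asymp\norm{\cdot}$, $\Phi$ is in particular a topological isomorphism of $H$ onto $\mathcal{K}_{\Theta_*}$. Finally, the norm formula for $\Phi_2$ drops out of the isometry: $\norm{\Phi_2 h}^2=\Norm{h}^2-\norm{\Phi_1 h}^2=\Norm{h}^2-\sum_{n\ge0}\norm{DT^n h}^2$, which equals $\tn{lim}^*_{n\to\infty}\norm{T^n h}^2$ by Theorem~\ref{thm new norm} (in the form~\eqref{eq-lim-st}).

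The steps requiring genuine care, rather than the mere citation of the classical model, are the identifications in the first paragraph: checking that $\widehat V$ carries $\binom{\Theta_{\wT^*}}{\Delta_*}H^2(\mathcal{D}_{\wT^*})$ onto $\binom{\Theta_*}{\Delta_*}H^2(\mathcal{D}_{\wT^*})$ and the ambient space $H^2(\mathcal{D}_{\wT})\oplus\operatorname{clos}\Delta_* L^2(\mathcal{D}_{\wT^*})$ onto $H^2(\mathcal{D}_T)\oplus\operatorname{clos}\Delta_* L^2(\mathcal{D}_{\wT^*})$ unitarily (which uses that $V$ is an isometry \emph{onto} $\mathcal{D}_T$), and matching the abstract residual norm $\lim_n\Norm{\wT^n h}^2$ produced by the classical model with the regularized limit $\tn{lim}^*_{n\to\infty}\norm{T^n h}^2$. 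The latter is exactly the content of Theorem~\ref{thm new norm}, which is what makes the otherwise abstract Nagy--Foia\c{s} data computable in terms of $T$, $D$ and $\tn{lim}^*$; this is the crux where the special structure of the class $\cC_\al$ enters. I also rely throughout on $\wT$ being completely nonunitary, so that $U$ is a genuine unitary onto the full model space.
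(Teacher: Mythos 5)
Your proposal is correct and supplies exactly the argument the paper intends (the paper states this theorem without a written proof, all of the section's definitions being set up for it): transport the classical Nagy--Foia\c{s} model of the completely nonunitary contraction $\wT$ — under which $\wT$ acts as the backward shift $M_*$ on $\mathcal{K}_{\Theta_{\wT^*}}$, with analytic component $h\mapsto D_{\wT}(I-z\wT)^{-1}h$ — through the coefficientwise unitary induced by $V$, using the identifications $\Theta_*=V\Theta_{\wT^*}$, $\Delta_*=\Delta_{\wT^*}$ (both resting on $V$ being isometric \emph{onto} $\mathcal{D}_T$) and the identity $\Norm{h}^2=\sum_{n\ge0}\norm{DT^nh}^2+\tn{lim}^*_{n\to\infty}\norm{T^nh}^2$ from Theorem~\ref{thm new norm} to compute $\norm{\Phi_2h}^2$. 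You also resolve correctly the one point where the statement could mislead, namely that $\Phi$ is isometric with respect to the equivalent norm $\Norm{\cdot}$ rather than $\norm{\cdot}$, so nothing essential is missing.
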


It is a common point that this kind of result implies a variant of von Neumann inequality: if $T$
satisfies the conditions of the above theorem, then for any $f\in \mathcal{H}(\overline\D)$,
\[
\norm{f(T)}\le C \max_{\overline\D}|f|,
\]
where $C$ is a constant depending only on $T$ (in fact, $C=\norm(\Phi)\norm(\Phi^{-1})$).

Here we should mention the works by Olofsson (see \cite{Olo08}), where he relates
certain transfer functions associated with $n$-hypercontractions with Bergman inner functions,
which are crucial in the description of invariant subspaces of Bergman spaces.
His results were further generalized in the work by
\marginp{comprobar}
Ball and Bolotnikov
\cite{BallBolotnikov2013a}, \cite{BallBolotnikov2013b}.


\section{Operators in $\cC_\al$ whose characteristic function has a determinant}
\label{sec-Sp-class}

In what follows, $\mathfrak{S}_p$ ($0<p\le \infty$) will denote the Schatten-von Neumann class of operators.
\begin{lemma}
Let $T \in \mathcal{C}_\alpha$ for some admissible function
$\alpha$ and let $p \in [1, \infty]$. \quad
\begin{enumerate}
\item[\textnormal{\textbf{(i)}}] If $I - T^*T \in \mathfrak{S}_p$,
then $D^2 \in \mathfrak{S}_p$. \item[\textnormal{\textbf{(ii)}}]
If $D^2 \in \mathfrak{S}_p$ and $\tilde{\alpha}$ has no zeros in $\overline\D$,
then $I - T^*T \in \mathfrak{S}_p$.
\end{enumerate}
\end{lemma}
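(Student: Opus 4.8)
The plan is to reduce both implications to the factorization $\al(t)=(1-t)\wt{\al}(t)$ combined with the transfer identity of Lemma~\ref{L3 B 27 GL} and the elementary fact that each Schatten class $\mathfrak{S}_p$ ($p\in[1,\infty]$) is a two-sided ideal of $L(H)$ for which $\norm{ASB}_{\mathfrak{S}_p}\le \norm{A}\,\norm{S}_{\mathfrak{S}_p}\,\norm{B}$ and which is a Banach space in its norm. Throughout I set $\om_n=1+\norm{T^n}^2$, which is a {\gs} exactly as in the proof of Theorem~\ref{T1 B 27 GL}. The defining condition of $\cC_\al$ gives $\wt{\al}\in A_\om$, and $\norm{T^n}^2\le\om_n$ is precisely the hypothesis required to invoke Lemma~\ref{L3 B 27 GL}.

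For (i), I would apply Lemma~\ref{L3 B 27 GL}(i) with $f(t)=1-t$, $g=\wt{\al}$, $h=\al$ and $B=I$, which gives
\[
D^2=\al[T^*,T]=\wt{\al}[T^*,T](I-T^*T)=\sum_{n=0}^{\infty}\wt{\al}_n\,T^{*n}(I-T^*T)T^n .
\]
Assuming $I-T^*T\in\mathfrak{S}_p$, the ideal property yields $T^{*n}(I-T^*T)T^n\in\mathfrak{S}_p$ with $\norm{T^{*n}(I-T^*T)T^n}_{\mathfrak{S}_p}\le \norm{T^n}^2\,\norm{I-T^*T}_{\mathfrak{S}_p}$. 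Since $\sum_n\abs{\wt{\al}_n}\norm{T^n}^2<\infty$, the series converges in the $\mathfrak{S}_p$-norm, so $D^2\in\mathfrak{S}_p$. Note that this direction uses only $\wt{\al}\in A_\om$ and no hypothesis on the zeros of $\wt{\al}$.

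For (ii), the new ingredient is to invert $\wt{\al}$ \emph{inside the weighted algebra}. Because $\wt{\al}$ has no zeros in $\ol{\D}$ and the characters of $A_\om$ are exactly the evaluation functionals at the points of $\ol{\D}$, the element $\wt{\al}$ is invertible in $A_\om$; hence $\ga:=1/\wt{\al}\in A_\om$, that is, $\sum_n\abs{\ga_n}\om_n<\infty$. Now $(1-t)=\ga(t)\al(t)$, so Lemma~\ref{L3 B 27 GL}(i) with $f=\al$, $g=\ga$, $h=1-t$ and $B=I$ gives
\[
I-T^*T=\ga[T^*,T](\al[T^*,T])=\sum_{n=0}^{\infty}\ga_n\,T^{*n}D^2T^n .
\]
From $D^2\in\mathfrak{S}_p$ and the ideal property, $\norm{T^{*n}D^2T^n}_{\mathfrak{S}_p}\le\norm{T^n}^2\,\norm{D^2}_{\mathfrak{S}_p}$, and since $\sum_n\abs{\ga_n}\norm{T^n}^2\le\sum_n\abs{\ga_n}\om_n<\infty$, the series converges in $\mathfrak{S}_p$, whence $I-T^*T\in\mathfrak{S}_p$.

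The main obstacle is precisely the inversion step in (ii): it is not enough to know that $1/\wt{\al}\in A_W$, since that would only give $\sum_n\abs{\ga_n}<\infty$, whereas the transfer identity requires control of the \emph{weighted} sum $\sum_n\abs{\ga_n}\norm{T^n}^2$. The resolution is to work in $A_\om$ instead of $A_W$ and to use both the no-zero hypothesis on $\wt{\al}$ in $\ol{\D}$ and the identification of the maximal ideal space of $A_\om$ with $\ol{\D}$, so that $\ga$ automatically inherits the weighted summability $\sum_n\abs{\ga_n}\om_n<\infty$. Once this is secured, both parts follow at once from the ideal property of $\mathfrak{S}_p$.
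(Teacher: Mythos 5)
Your proof is correct and follows the same strategy as the paper: both parts rest on the factorization $\al(t)=(1-t)\wt{\al}(t)$, the transfer identity of Lemma~\ref{L3 B 27 GL}, and the ideal property of $\mathfrak{S}_p$, with part (i) being exactly the paper's identity $D^2=\wt{\al}[T^*,T](I-T^*T)$. The one place you diverge is the inversion step in (ii). The paper inverts $\wt{\al}$ in the \emph{unweighted} Wiener algebra: since $\wt{\al}\in A_W$ has no zeros in $\ol{\D}$, Wiener's lemma gives $1/\wt{\al}\in A_W$, and then $(1/\wt{\al})[T^*,T](D^2)=I-T^*T$. You object that membership in $A_W$ alone does not control $\sum_n\abs{\ga_n}\norm{T^n}^2$; that objection is slightly overstated, because any $T\in\cC_\al$ is similar to a contraction by Theorem~\ref{T1 B 27 GL}, hence power bounded, so $\norm{T^n}^2\lesssim 1$ and the paper's application of Lemma~\ref{L3 B 27 GL} with the constant weight is legitimate (this reliance on power boundedness is left implicit in the paper). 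Your alternative — inverting in $A_\om$ with $\om_n=1+\norm{T^n}^2$, via the identification of the characters of $A_\om$ with evaluations on $\ol{\D}$ — avoids any appeal to Theorem~\ref{T1 B 27 GL} and stays entirely within the defining data of $\cC_\al$; it is the same Gelfand-theoretic device the paper itself uses in the proof of Lemma~\ref{L2 B 27 GL}, and it makes the lemma self-contained at essentially no extra cost.
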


\begin{proof}
(i) It is immediate, since $D^2 = \alpha[T^{*},T] =
\tilde{\alpha}[T^*,T](I-T^*T)$.

(ii) Since $\tilde{\alpha}$ has no zeros in $\overline\D$,
$1/\tilde{\alpha} \in A_W$, and we obtain that
\[
(1/\tilde{\alpha}) [T^*,T] (D^2) = (1/\tilde{\alpha}) [T^*,T]
(\tilde{\alpha}[T^*,T] (I- T^{*}T)) = I- T^{*}T,
\]
which proves the result.
\end{proof}

\begin{lemma}
Let $T \in \mathcal{C}_\alpha$ for some admissible function
$\alpha$ and let $p \in [1, \infty]$. If $\sigma(T) \ne
\overline{\D}$, then $D_{\tilde{T}} \in \mathfrak{S}_p$ if and
only if $D_{\tilde{T}^*} \in \mathfrak{S}_p$.
\end{lemma}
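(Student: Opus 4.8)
The plan is to reduce to the case of an \emph{invertible} contraction by composing $\wt{T}$ with a Möbius automorphism of the disc. Since $\wt{T}=WTW^{-1}$ is similar to $T$, we have $\sigma(\wt{T})=\sigma(T)\neq\overline{\D}$; as $\sigma(\wt{T})$ is a closed proper subset of $\overline{\D}$ it cannot contain the whole open disc, so I can fix a point $\lambda_0\in\D\setminus\sigma(\wt{T})$. Set $R:=(I-\overline{\lambda_0}\wt{T})^{-1}$, which is bounded and invertible because $\norm{\overline{\lambda_0}\wt{T}}<1$, and define
\[
C:=(\wt{T}-\lambda_0 I)(I-\overline{\lambda_0}\wt{T})^{-1}=(\wt{T}-\lambda_0 I)R.
\]
Because $\lambda_0\notin\sigma(\wt{T})$, the factor $\wt{T}-\lambda_0 I$ is invertible, so $C$ is invertible.

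The computational heart is the identity
\[
I-C^*C=(1-\abs{\lambda_0}^2)\,R^*\,(I-\wt{T}^*\wt{T})\,R,
\]
obtained by substituting $R^{-1}=I-\overline{\lambda_0}\wt{T}$ and expanding; as a byproduct its right-hand side is $\geq 0$, which re-proves that $C$ is a contraction without appealing to von Neumann's inequality. Thus $D_C^2=(1-\abs{\lambda_0}^2)R^*D_{\wt{T}}^2 R$, and since $R$ is bounded and invertible, $R^*D_{\wt{T}}^2R\in\mathfrak{S}_{p/2}$ iff $D_{\wt{T}}^2\in\mathfrak{S}_{p/2}$, i.e.\ $D_C\in\mathfrak{S}_p\iff D_{\wt{T}}\in\mathfrak{S}_p$. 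Applying the same identity to the contraction $\wt{T}^*$ with parameter $\overline{\lambda_0}$, and using that $(\wt{T}^*-\overline{\lambda_0})$ commutes with $(I-\lambda_0\wt{T}^*)^{-1}$ (both are functions of $\wt{T}^*$) so that the resulting operator equals exactly $C^*$, I obtain $D_{C^*}^2=(1-\abs{\lambda_0}^2)\,R\,D_{\wt{T}^*}^2\,R^*$, hence $D_{C^*}\in\mathfrak{S}_p\iff D_{\wt{T}^*}\in\mathfrak{S}_p$.

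Finally I exploit the invertibility of $C$. The general contraction identity $C(I-C^*C)=(I-CC^*)C$ reads $C D_C^2=D_{C^*}^2 C$, so $D_{C^*}^2=C D_C^2 C^{-1}$; conjugation by the bounded invertible $C$ preserves $\mathfrak{S}_{p/2}$, giving $D_C\in\mathfrak{S}_p\iff D_{C^*}\in\mathfrak{S}_p$. Chaining the three equivalences yields $D_{\wt{T}}\in\mathfrak{S}_p\iff D_{\wt{T}^*}\in\mathfrak{S}_p$, as claimed.

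The main obstacle — and the reason the hypothesis $\sigma(T)\neq\overline{\D}$ is needed — is that $\wt{T}$ itself need not be invertible, so one cannot simply write $D_{\wt{T}^*}^2=\wt{T}D_{\wt{T}}^2\wt{T}^{-1}$. The Möbius substitution repairs this by producing an invertible contraction $C$ whose two defect operators remain Schatten-equivalent to those of $\wt{T}$ and $\wt{T}^*$ respectively; the only points requiring care are verifying that $R$ and $C$ are bounded and boundedly invertible and that the Möbius transform of $\wt{T}^*$ coincides with $C^*$.
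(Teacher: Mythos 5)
Your proof is correct, and its skeleton coincides with the paper's: both choose $\lambda_0\in\D\setminus\sigma(\wt{T})$ (possible because $\sigma(\wt{T})$ is a closed proper subset of $\overline{\D}$), pass to the M\"obius transform $C=(\wt{T}-\lambda_0 I)(I-\overline{\lambda_0}\wt{T})^{-1}$, and use the conjugation identity $I-C^*C=(1-\abs{\lambda_0}^2)R^*(I-\wt{T}^*\wt{T})R$ to transfer Schatten membership of the defect operators. The difference is in how the invertible contraction $C$ is then dispatched: the paper quotes the known fact that an invertible contraction has unitarily equivalent defect operators (citing \cite{NFBK10} and \cite{Ker90}), whereas you derive the needed equivalence from scratch via the trivial intertwining $C(I-C^*C)=(I-CC^*)C$, i.e.\ $D_{C^*}^2=CD_C^2C^{-1}$, together with the ideal property of $\mathfrak{S}_{p/2}$ under conjugation by a bounded invertible operator. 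You also make explicit the adjoint-side identity $D_{C^*}^2=(1-\abs{\lambda_0}^2)R\,D_{\wt{T}^*}^2R^*$, including the verification that the M\"obius transform of $\wt{T}^*$ with parameter $\overline{\lambda_0}$ is exactly $C^*$ --- a step the paper leaves implicit in the phrase ``the general case follows.'' What your route buys is self-containedness: no external reference is needed, and you prove only the weaker Schatten-class equivalence (rather than unitary equivalence of the defects), which is all the lemma requires; the paper's version is shorter but rests on a stronger borrowed fact.
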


\begin{proof}
In the case when $0\notin\sigma(T)$, $D_{\tilde{T}}$ is
\marginp{rehacerlo}
unitarily equivalent
to $D_{\tilde{T}^*}$,
see \cite[the proof of Theorem VIII.1.1]{NFBK10} or \cite[Lemma 9]{Ker90}; this
implies our assertion.
The general case follows from this one. Indeed, take
any $\la\in\D\setminus\sigma(T)$ and consider the
M\"obius self-map of $\D$, given
by $b_\la(z)=(z-\la)/(1-\bar \la z)$.
Let $\tilde T_\la$ be the contraction, defined by $T_\la=b_\la(\tilde{T})$.
Then the formula
\[
I-\tilde T^*_\la \tilde T_\la = W^*(I-\tilde T^* \tilde T)W \qquad \text{with }
W = (1-|\la|^2)^{1/2}(I-\bar \la \tilde T)^{-1}
\]
implies that $D_{\tilde{T}}\in \mathfrak{S}_p$ if and only if
$D_{\tilde T_\la}\in\mathfrak{S}_p$. Since $0\notin\sigma(\tilde T_\la)$, the general
case follows.
\end{proof}

We remark that the previous lemma applies to a more general situation when
$T$ is a power bounded operator with $\sigma(T) \ne \overline{\D}$ and
$D$ is its abstract defect operator. Then, by Lemma~\ref{lem-defect},
one gets a contraction $\wT$, similar to $T$, and so for any $p$,
$D\in \mathfrak{S}_p$ iff $D_{\tilde T}\in \mathfrak{S}_p$ iff
$D_{\tilde{T}^*} \in \mathfrak{S}_p$.

We recall the well-known fact
that the characteristic function of a contraction $S$
has the determinant whenever
$\sigma(S) \ne \overline{\D}$ and
$I-S^*S\in\mathfrak{S}_1$ (this is the so-called class of weak contractions).
It follows that $\Theta_*$ has a determinant whenever
$\sigma(T) \ne \overline{\D}$ and
$D\in\mathfrak{S}_2$.
Notice that $\det \Theta_*$ is an $H^\infty$ function such that $\norm{\det \Theta_*}_\infty\le 1$.
We obtain the following statement.

\begin{prop}
Suppose $\al$ is strongly admissible and $T\in \cC_\al$
is a completely nonunitary operator.
Suppose also that  $\sigma(T) \ne \overline{\D}$ and
$\al[T^*,T]\in\mathfrak{S}_1$.
Denote by $\sigma_p(T)$ the point spectrum of $T$.
Then the following assertions
are equivalent.

\begin{itemize}

\item[(i)]
$T$ is complete, that is,
$H=\operatorname{span}\{\ker (\la I-T)^k: \quad k\ge 1, \la\in \sigma_p(T)\}$;
%

\item[(ii)]
$T^*$ is complete;

\item[(iii)]
$\det \Theta_*(z)$ is a Blaschke product.
\end{itemize}
\end{prop}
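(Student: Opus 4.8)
The plan is to deduce the proposition from the classical Nagy--Foia\c{s} theory of \emph{weak contractions} (c.n.u.\ contractions $S$ with $\sigma(S)\ne\overline{\D}$ and $I-S^*S\in\mathfrak{S}_1$), applied to the contraction $\wt T$ similar to $T$. The first observation is that completeness is invariant under similarity. Writing $\wt T=WTW^{-1}$ with $W$ invertible (Theorem~\ref{thm new norm}), we have $(\lambda I-\wt T)^k=W(\lambda I-T)^kW^{-1}$, so that $\ker(\lambda I-\wt T)^k=W\ker(\lambda I-T)^k$ and $W$ carries the span of the root vectors of $T$ onto that of $\wt T$. Since $W$ is a topological isomorphism, $T$ is complete iff $\wt T$ is; similarly, from $\wt T^*=(W^*)^{-1}T^*W^*$, $T^*$ is complete iff $\wt T^*$ is. Thus (i) and (ii) are equivalent to the completeness of $\wt T$ and of $\wt T^*$, respectively.

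Next I would verify that $\wt T$ is a completely nonunitary weak contraction. Complete nonunitarity is our standing hypothesis on $T$ (and passes to $\wt T$ via Theorem~\ref{Theorem 4}), while $\sigma(\wt T)=\sigma(T)\ne\overline{\D}$ by similarity invariance of the spectrum. For the trace-class defect, note that $\alpha[T^*,T]=D^2\in\mathfrak{S}_1$ forces $D\in\mathfrak{S}_2$; by the remark following the second lemma of this section (the case $\sigma(T)\ne\overline{\D}$ of Lemma~\ref{lem-defect}), $D\in\mathfrak{S}_2$ iff $D_{\wt T}\in\mathfrak{S}_2$, whence $I-\wt T^*\wt T=D_{\wt T}^2\in\mathfrak{S}_1$. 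In particular $\det\Theta_*$ is well defined, as already noted before the statement.

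With this in place I would invoke the classical theory. Up to the isometric identification $V$ of defect spaces introduced in Section~\ref{sec-NF-model} (which alters the scalar determinant only by a unimodular constant), $\Theta_*$ is the Nagy--Foia\c{s} characteristic function of $\wt T^*$. Since $\wt T$ is a c.n.u.\ weak contraction, $d(\lambda):=\det\Theta_{\wt T}(\lambda)$ is a nonzero scalar bounded analytic function admitting the inner--outer factorization into a Blaschke product, a singular inner factor and an outer factor, and the Nagy--Foia\c{s} completeness theorem for weak contractions gives that $\wt T$ is complete iff $d$ is a Blaschke product. The adjoint relation $\det\Theta_{\wt T^*}(\lambda)=\overline{\det\Theta_{\wt T}(\bar\lambda)}$ shows that $\det\Theta_{\wt T^*}$ is Blaschke iff $\det\Theta_{\wt T}$ is, and that $\wt T^*$ is complete iff $\det\Theta_{\wt T^*}$ is Blaschke. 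Combining these equivalences with the similarity reduction of the first paragraph yields (i) $\Leftrightarrow$ (ii) $\Leftrightarrow$ (iii).

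The main obstacle I expect is the precise identification of the model characteristic function $\Theta_*$ from Section~\ref{sec-NF-model} with the genuine Nagy--Foia\c{s} characteristic function of $\wt T^*$, and the verification that ``$\det\Theta_*$ is a Blaschke product'' has the same meaning in both frameworks. The subtlety is that $W$ is only a similarity, not a unitary, so our model represents $T$ (not $\wt T$) and does so isometrically only in the new norm $\Norm{\cdot}$; one must check that the intertwining $\Phi T=M_*\Phi$ is built exactly so that $\Theta_*$ reproduces the characteristic function of the contraction $\wt T^*$, and that the unitary $V$ leaves the inner part of $\det\Theta_*$ unchanged. Once this identification is secured, the remaining equivalences are direct citations of the weak-contraction completeness theorem and the adjoint symmetry of the determinant.
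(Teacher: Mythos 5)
Your proposal is correct and takes essentially the same route as the paper: the paper's own (very terse) proof likewise reduces to the similar contraction $\wt T$, uses the two preceding lemmas and the remark of that section to conclude that $\wt T$ is a completely nonunitary weak contraction whose characteristic function agrees with $\Theta_*$ up to the constant isometry $V$, and then invokes the classical completeness theorem for c.n.u.\ weak contractions from Nikolski's book. Your write-up simply makes explicit the steps the paper leaves implicit (similarity invariance of completeness, the Schatten-class bookkeeping $D\in\mathfrak{S}_2\Leftrightarrow D_{\wt T}\in\mathfrak{S}_2\Leftrightarrow I-\wt T^*\wt T\in\mathfrak{S}_1$, and the adjoint symmetry of $\det\Theta_*$), including the identification of $\Theta_*$ with the Nagy--Foia\c{s} characteristic function of $\wt T^*$ that the paper also takes for granted.
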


This follows from the above observations and from an analogous fact for
completely nonunitary contractions, see \cite[p. 134]{NikEasyReadVol2}.

In a similar way, one can extend the results by Treil~\cite{Treil97},
Nikolski - Benamara~\cite{BenamaraNikol99},
Kupin \cite{Kupin01}, \cite{Kupin03} and
\marginp{Nikolski spectral synthesis}
 others to the setting of operators
in $\cC_\al$, where $\al$ is strongly admissible.


\section{A result on the inclusion of classes $\mathcal{C}_\alpha \subset \mathcal{C}_\tau$}
\label{sec-cont}

The goal of this section is to prove the following result on the
containment of classes.

Let $\al$ and $\tau$ be admissible functions and let $\gamma := \wt{\tau} / \wt{\al}$. Note that $\gamma$ is analytic on a neighbourhood of the origin.

\begin{thm}\label{T2 B 27 GL}
Let $\al, \tau$ and $\ga$ be as above.
\begin{enumerate}
    \item[\textnormal{\textbf{(i)}}] \enspace If $\mathcal{C}_\al \subset \mathcal{C}_\tau$, then $\ga \succ 0$.
    \item[\textnormal{\textbf{(ii)}}] \enspace If $\ga \in A_W$, then $\mathcal{C}_\al \subset \mathcal{C}_\tau$ if and only if $\ga \succ 0$.
    \item[\textnormal{\textbf{(iii)}}] \enspace If $\ga$ is not bounded in $\D$ (in particular, if $\ga$ has poles in $\D$), then $\mathcal{C}_\al \not \subset \mathcal{C}_\tau$.
\end{enumerate}
\end{thm}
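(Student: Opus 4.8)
The plan rests on the identity $\tau=\ga\al$ (equivalently $\wt\tau=\ga\wt\al$), which is immediate from $\ga=\wt\tau/\wt\al$. Observe first that $\ga_0=\wt\tau(0)/\wt\al(0)=\wt\tau_0/\wt\al_0>0$ holds automatically, since admissibility forces $\wt\al_0,\wt\tau_0>0$; so in every part the only genuine content is the nonnegativity of the coefficients $\ga_n$, $n\ge1$. For the nontrivial implication of (ii), assume $\ga\in A_W$ and $\ga\succ0$, and take $T\in\cC_\al$. By Theorem~\ref{T1 B 27 GL}, $T$ is similar to a contraction, hence power bounded, so $\om_n:=1+\norm{T^n}^2$ is a bounded {\gs}; consequently $A_\om=A_W$ with equivalent norms. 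Thus $\ga\in A_\om$, and since $\wt\al\in A_\om$ and $A_\om$ is an algebra, $\wt\tau=\ga\wt\al\in A_\om$, which gives the summability condition $\sum_n\abs{\wt\tau_n}(1+\norm{T^n}^2)<\infty$ needed for $\cC_\tau$. For positivity I apply Lemma~\ref{L3 B 27 GL}(ii) to $\tau=\ga\,\al$ (all factors in $A_\om$, and $\norm{T^n}^2\le\om_n$):
\[
\tau[T^*,T]=\ga[T^*,T]\big(\al[T^*,T]\big)=\sum_{n=0}^\infty \ga_n\, T^{*n}\,\al[T^*,T]\,T^n\ge0,
\]
because $\ga_n\ge0$ and $\al[T^*,T]\ge0$. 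Hence $T\in\cC_\tau$. The reverse implication of (ii) is precisely (i).

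Part (i) is the heart of the matter and needs enough test operators inside $\cC_\al$. The mechanism is again $\tau[T^*,T]=\ga[T^*,T](\al[T^*,T])$: if I build $T\in\cC_\al$ for which $\al[T^*,T]$ and a vector orbit are controlled, then the inequality $\tau[T^*,T]\ge0$ forced by $\cC_\al\subset\cC_\tau$ reveals the signs of the $\ga_n$. Concretely I would use diagonalizable operators with eigenvalues $\la_1,\dots,\la_m\in\D$ and normalized eigenvectors with Gram matrix $G$; as in the computation of Lemma~\ref{matrix without Jordan blocks}, $\al[T^*,T]\ge0$ holds iff the Schur product $[\al(\la_k\bar\la_\ell)]\circ G$ is positive semidefinite, and similarly with $\tau$ in place of $\al$. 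Since $\tau(\la_k\bar\la_\ell)=\ga(\la_k\bar\la_\ell)\,\al(\la_k\bar\la_\ell)$, the inclusion yields, once $G$ is chosen so that $T\in\cC_\al$, a positivity constraint on the kernel $[\ga(\la_k\bar\la_\ell)]$. Letting the $\la_k$ range over all finite subsets of $\D$, the goal is to conclude that $(\la,\mu)\mapsto\ga(\la\bar\mu)=\sum_n\ga_n\la^n\bar\mu^n$ is a positive semidefinite kernel on $\D$; by the classical characterization this is equivalent to $\ga_n\ge0$ for all $n$, i.e. $\ga\succcurlyeq0$, which together with $\ga_0>0$ gives $\ga\succ0$.

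Part (iii) then follows cleanly from (i). Suppose, for contradiction, that $\cC_\al\subset\cC_\tau$; by (i) we have $\ga\succ0$, so $\ga$ has nonnegative Taylor coefficients, whence $\ga$ is nondecreasing on $[0,1)$ and $\abs{\ga(z)}\le\ga(\abs{z})$ for $z\in\D$. Because $\wt\al,\wt\tau\in A_W$ are continuous on $\overline\D$ and admissibility gives $\wt\al(1)>0$, the ratio $\ga=\wt\tau/\wt\al$ extends continuously to $t=1$ with $\lim_{t\to1^-}\ga(t)=\wt\tau(1)/\wt\al(1)<\infty$. Hence $\abs{\ga(z)}\le\wt\tau(1)/\wt\al(1)$ on $\D$, so $\ga$ is bounded, contradicting the hypothesis that $\ga$ is unbounded in $\D$. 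Therefore $\cC_\al\not\subset\cC_\tau$.

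The main obstacle is Part (i), specifically ensuring that the test operators truly belong to $\cC_\al$: unlike in Agler's framework one may not assume that $1/\al$ (equivalently $1/\wt\al$) has nonnegative coefficients, so the kernel $[\al(\la_k\bar\la_\ell)]$ need not be positive and one cannot simply prescribe $\al[T^*,T]$ by a Schur division. Verifying $[\al(\la_k\bar\la_\ell)]\circ G\ge0$ while keeping enough freedom in $G$ to extract the full positivity of $[\ga(\la_k\bar\la_\ell)]$ is the delicate point. I expect to handle it by reducing to extremal two-dimensional configurations, where the constraint becomes $\abs{\ga(\la_1\bar\la_2)}^2\le\ga(\abs{\la_1}^2)\,\ga(\abs{\la_2}^2)$, equivalently $\abs{\ga(\rho e^{i\psi})}\le\ga(\rho)$ for all $\rho\in[0,1)$ and all $\psi$, and then upgrading this pointwise modulus bound to coefficient nonnegativity. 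Once (i) and Theorem~\ref{T1 B 27 GL} are available, the reverse of (ii) and all of (iii) are comparatively routine.
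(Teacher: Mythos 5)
Your parts (ii) and (iii) are essentially the paper's own arguments: for (ii) you apply, exactly as the paper does, Lemma~\ref{L3 B 27 GL}(ii) to $\tau=\ga\al$ (your extra check of the summability condition for $\cC_\tau$ via power boundedness is a point the paper leaves implicit), and your (iii) is the paper's deduction of (iii) from (i). One caveat on (iii): the step ``$\ga\succ0$ implies $\abs{\ga(z)}\le\ga(\abs{z})$ on $\D$ and monotonicity on $[0,1)$'' tacitly assumes that the Taylor series of $\ga$ at $0$ converges on all of $\D$; a priori $\ga$ is only meromorphic in $\D$, so you need the Pringsheim-type argument of Lemma~\ref{L6 B 27 GL} (a singularity at a radius of convergence $r<1$ would be a pole of $\ga$ on $(0,1)$, impossible since $\wt{\al}>0$ on $[0,1]$) before you may evaluate the series at $\abs{z}$.

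The genuine gap is part (i), which, as you say yourself, is the heart of the theorem, and which your text plans rather than proves; moreover the plan cannot be completed as described. Beyond the difficulty you acknowledge (when $[\al(\la_k\bar\la_\ell)]$ is not a positive kernel, you have no device for producing Gram matrices $G$ with $[\al(\la_k\bar\la_\ell)]\circ G$ positive semidefinite in sufficient abundance), your final step is simply false: the pointwise bound $\abs{\ga(\rho e^{i\psi})}\le\ga(\rho)$ for all $\rho\in[0,1)$ and all $\psi$ does \emph{not} imply $\ga\succcurlyeq0$. Take $\ga(z)=(1+z)^4-7z^2$: it is positive on $[0,1]$, so it arises in the setting of the theorem (with $\al=1-t$, $\wt{\tau}=\ga$), and it satisfies $\abs{\ga(z)}\le\ga(\abs{z})$ on $\overline{\D}$ --- on the unit circle $\ga(e^{i\theta})=e^{2i\theta}\bigl(16\cos^{4}(\theta/2)-7\bigr)$, whence $\abs{\ga(e^{i\theta})}\le 9=\ga(1)$, and on interior circles nonnegativity of $\ga(\rho)^2-\abs{\ga(\rho e^{i\theta})}^2=4\sum_{n<m}\ga_n\ga_m\rho^{n+m}\sin^{2}\bigl((m-n)\theta/2\bigr)$ follows from an elementary estimate using $\sin^{2}\theta\le4\sin^{2}(\theta/2)$ --- yet $\ga_2=-1<0$. (Nonnegativity of all $\ga_n$ is equivalent to positive semidefiniteness of the kernel $\sum_n\ga_n\la^n\bar\mu^n$ in \emph{all} finite sizes; two-point configurations are strictly weaker.) The paper's proof of (i) runs along entirely different and more elementary lines: it tests the inclusion on the weighted shifts of Lemma~\ref{Lemma 8}, with weights eventually $1$, for which membership in $\cC_\al$ is the coefficientwise condition $\al(\nabla)\Lambda\succcurlyeq0$ on the associated sequence $\Lambda$ of squared weight products. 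Writing $\ell$ for the first index with $\ga_\ell<0$, it sets $\Gamma=(\ga_\ell,\ga_{\ell-1},\dots,\ga_0,0,0,\dots)$ and $\Psi=(\wt{\tau})^{-1}(\nabla)\Gamma$ (still finitely supported), and takes $\Lambda$ solving $(1-\nabla)\Lambda=\Psi$ with $\Lambda_0$ large enough to keep $\Lambda$ positive; then $\al(\nabla)\Lambda=(1/\ga)(\nabla)\Gamma$ is the nonnegative sequence with a single $1$ in position $\ell$, while $\tau(\nabla)\Lambda=\Gamma$ has $\Gamma_0=\ga_\ell<0$, so the corresponding shift lies in $\cC_\al\setminus\cC_\tau$. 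Some construction of this kind --- explicit test operators whose membership in $\cC_\al$ is certified without any kernel positivity --- is what your argument for (i) is missing.
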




The following lemma is in the spirit of Pringsheim's Theorem (see \cite{Hil12}).

\begin{lemma}\label{L6 B 27 GL}
Let $g$ be a meromorphic function in $\D$, analytic in the origin,
such that $g \succcurlyeq 0$.
If $g$ is bounded on $[0,1)$, then $g$ is a bounded analytic function on $\D$.
\end{lemma}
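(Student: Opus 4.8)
The plan is to work with the Taylor expansion $g(t)=\sum_{n=0}^\infty g_n t^n$ at the origin, whose coefficients satisfy $g_n\ge 0$ by the hypothesis $g\succcurlyeq 0$, and to reduce the statement to two facts: that $g$ has no poles in $\D$, and that, once it is analytic, a power series with nonnegative coefficients is controlled on all of $\D$ by its values on $[0,1)$. The only genuine subtlety is that an a priori pole of $g$ could sit off the positive real axis, where the boundedness hypothesis says nothing; Pringsheim's theorem is exactly the tool that relocates the first singularity onto the positive real axis, where the hypothesis bites.

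First I would let $R$ denote the radius of convergence of the series (which is positive, since $g$ is analytic at $0$) and set $M:=\sup_{t\in[0,1)} g(t)<\infty$. I claim $R\ge 1$, equivalently that $g$ has no pole in $\D$. Arguing by contradiction, suppose $R<1$, so that $R\in(0,1)\subset\D$. Since all $g_n\ge 0$, Pringsheim's theorem (the result cited from \cite{Hil12}) guarantees that $t=R$ is a singular point of $g$. But $g$ is meromorphic on $\D$ and $R\in\D$, so this singularity can only be a pole; hence $|g(t)|\to\infty$ as $t\to R^-$ along the real axis. As $R\in[0,1)$, this contradicts the boundedness of $g$ on $[0,1)$. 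Therefore $R\ge 1$ and $g$ is analytic throughout $\D$.

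It then remains to bound $g$ on $\D$, which is routine: for $t\in\D$ the nonnegativity of the coefficients gives
\[
|g(t)| = \Big| \sum_{n=0}^\infty g_n t^n \Big| \le \sum_{n=0}^\infty g_n |t|^n = g(|t|) \le M,
\]
since $|t|\in[0,1)$. Hence $g$ is a bounded analytic function on $\D$. The main obstacle is entirely concentrated in the first step—recognizing that Pringsheim's theorem, together with meromorphy, forces the singularity on the boundary of convergence to be a genuine blow-up on $[0,1)$; after that, the boundedness conclusion is immediate.
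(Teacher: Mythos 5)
Your argument is correct, but the decisive step rests on a different tool than the one the paper's own proof uses. You invoke Pringsheim's theorem to place the obstructing singularity exactly at the point $t=R$ of the positive real axis, then use meromorphy to upgrade it to a pole and contradict boundedness on $[0,1)$. The paper, despite advertising the lemma as being ``in the spirit of Pringsheim's theorem,'' never actually applies that theorem: it first records the elementary inequality $\abs{g(z)}\le g(\abs{z})$, valid wherever the series converges, and then argues that if the radius of convergence $r$ were less than $1$, the singularity on the circle $\{\abs{z}=r\}$ --- a pole by meromorphy, but sitting \emph{anywhere} on that circle --- would already force $g(t)\to\infty$ as $t\to r^-$, because $g(t)$ dominates $\abs{g}$ on the whole circle of radius $t$. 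Thus the paper's route is self-contained (in effect it reproves the pole case of Pringsheim's theorem from the coefficient inequality), while yours imports Pringsheim as a black box and reaches the contradiction slightly faster; both proofs then conclude identically via $\abs{g(z)}\le g(\abs{z})\le \sup_{[0,1)}g<\infty$. Either version is complete and valid.
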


\begin{proof}
Since $g \succcurlyeq 0$, we have
\begin{equation}\label{eq 1 B 28 GL}
\abs{g(z)} \leq g(\abs{z})
\end{equation}
whenever the series  for $g(\abs{z})$ converges.
Let $r\in(0,1]$ be the radius of convergence of $g$.
\marginp{He cambiado la prueba: estaba mal.}
If $r < 1$, then $g$ has poles on the circle $\{ \abs{z} = r \}$,
and this contradicts the boundedness of $g$ on $[0,r)$. Hence $r=1$, and therefore by\eqref{eq 1 B 28 GL}, $g$ is bounded on $\D$.
\end{proof}


\begin{lemma}\label{Lemma 8}
Let $\{ e_n \}_{n=0}^{\infty}$ be an orthonormal basis in $H$ and
\marginp{Más general: $\exists C: \Lambda_{n+k}\le C \Lambda_n, \; n,k\ge 0$}
let $\{ \la_n \}_{n=1}^{\infty}$ be a sequence of positive
numbers which are eventually $1$. Define the sequence $\Lambda =
\{ \Lambda_n \}_{n=0}^{\infty}$ by $\Lambda_n := (\la_1 \cdots
\la_{n+1})^2$. Let $T$ be the weighted shift operator, given by
$T e_n = \la_{n+1} e_{n+1}$, and let $\al \in
A_W$. Then $T \in \mathcal{C}_\al$ if and only
if $\al(\nabla)\Lambda \succ 0$ (here $\nabla$ is the backward shift on
one-sided sequences $\{A_n\}_{n\ge 0}$).
\end{lemma}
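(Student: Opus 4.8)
The plan is to use that $T$ is a weighted shift, so that $\al[T^*,T]$ is \emph{diagonal} with respect to the orthonormal basis $\{e_n\}$. For a diagonal operator, positivity is nothing but the non-negativity of its diagonal entries, and the whole lemma then reduces to identifying those entries with (positive multiples of) the Taylor coefficients of $\al(\nabla)\Lambda$.

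First I would dispose of the two conditions in the definition of $\cC_\al$ that do not involve positivity, showing they hold automatically here. Since the weights $\la_n$ are eventually $1$, only finitely many differ from $1$, so $\norm{T^k}=\sup_{m\ge 0}\prod_{j=1}^{k}\la_{m+j}$ is uniformly bounded in $k$ while $\norm{T^k}\ge 1$ for every $k$; hence $\norm{T^k}^{1/k}\to 1$, the spectral radius of $T$ equals $1$, and $\sigma(T)\ss\ol{\D}$. The same uniform bound on $\norm{T^k}$ together with $\wt\al\in A_W$ yields $\sum_n\abs{\wt\al_n}(1+\norm{T^n}^2)<\infty$. Thus, for these operators, $T\in\cC_\al$ is equivalent to the single requirement $\al[T^*,T]\ge 0$. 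The same observation shows that $\Lambda$ is eventually constant, hence bounded, so that the series defining $\al(\nabla)\Lambda$ converges absolutely (using $\al\in A_W$).

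The core is a direct computation. Since $T^k e_n=\big(\prod_{j=1}^{k}\la_{n+j}\big)e_{n+k}$ and $T^{*k}$ carries $e_{n+k}$ back to the same scalar times $e_n$, the operator $T^{*k}T^k$ is diagonal with $T^{*k}T^k e_n=\norm{T^k e_n}^2 e_n$, where $\norm{T^k e_n}^2=\prod_{j=1}^{k}\la_{n+j}^2=\Lambda_{n+k-1}/\Lambda_{n-1}$ (with the convention $\Lambda_{-1}:=1$). Summing against $\al$, the operator $\al[T^*,T]$ is diagonal, and for $n\ge 1$
\[
\big\langle \al[T^*,T]e_n,e_n\big\rangle=\frac{1}{\Lambda_{n-1}}\sum_{k=0}^{\infty}\al_k\Lambda_{n+k-1}=\frac{[\al(\nabla)\Lambda]_{n-1}}{\Lambda_{n-1}},
\]
with the analogous expression for $n=0$ through $\Lambda_{-1}=1$. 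As every factor $\Lambda_{n-1}$ is strictly positive, the diagonal operator $\al[T^*,T]$ is $\ge 0$ precisely when each Taylor coefficient of $\al(\nabla)\Lambda$ is non-negative, which (up to the normalization discussed below) is the claim $\al(\nabla)\Lambda\succ 0$.

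The step requiring the most care is the bookkeeping at the boundary index $n=0$: the diagonal entry at $e_0$ involves $\Lambda_{-1}$ and must be matched correctly with the zeroth coefficient of $\al(\nabla)\Lambda$, and it is exactly here that the strictness built into $\succ 0$ must be justified. I would handle this by writing $\al=(1-t)\wt\al$ and using the telescoping identity $[\al(\nabla)\Lambda]_m=[\wt\al(\nabla)\Lambda]_m-[\wt\al(\nabla)\Lambda]_{m+1}$ (the discrete counterpart of the summation carried out in the proof of Theorem~\ref{thm new norm}), together with the admissibility input $\wt\al(0)=\al_0>0$ and the eventual constancy of $\Lambda$, to control the zeroth term. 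Apart from this index matching at the left endpoint, the argument is entirely computational.
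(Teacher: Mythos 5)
Your core reduction is the same as the paper's, and is carried out more carefully: the paper also reduces positivity of $\al[T^*,T]$ to nonnegativity of the numbers $\sum_k\al_k\norm{T^ke_j}^2$, $j\ge 0$ (it phrases this through the quadratic form identity $\sum_k\al_k\norm{T^kh}^2=\sum_j\abs{h_j}^2\sum_k\al_k\norm{T^ke_j}^2$ rather than through diagonality, which amounts to the same thing), and your disposal of the side conditions ($\sigma(T)\ss\ol{\D}$ and the summability requirement) is correct.

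The genuine gap is your last step, and it cannot be repaired in the way you propose. With the definition of $\Lambda$ as printed, $\Lambda_n=(\la_1\cdots\la_{n+1})^2$, your formula $\norm{T^ke_n}^2=\Lambda_{n+k-1}/\Lambda_{n-1}$, $\Lambda_{-1}:=1$, is right, and it shows that the coefficients of $\al(\nabla)\Lambda$ account only for the diagonal entries at $e_1,e_2,\dots$; the entry at $e_0$ is an additional, \emph{independent} constraint, and no telescoping identity can recover it from $\al(\nabla)\Lambda\succcurlyeq 0$. Indeed, with this literal reading the lemma is false in both directions: for $\al(t)=1-t$ (so $\cC_\al$ is the class of contractions) and weights $\la_1=2$, $\la_2=1/2$, $\la_n=1$ for $n\ge 3$, one gets $\Lambda=(4,1,1,\dots)$ and $\al(\nabla)\Lambda=(3,0,0,\dots)\succ 0$, yet $\norm{Te_0}=2$, so $T\notin\cC_{1-t}$ (the unaccounted $e_0$ entry equals $1-\norm{Te_0}^2=-3$); and for the unweighted shift ($\la_n\equiv 1$) one has $T\in\cC_{1-t}$ while $\al(\nabla)\Lambda=0$ has zeroth coefficient $0$, so the strictness built into $\succ$ fails as well. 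What resolves this is not a further argument but an index correction, which the paper's own proof makes tacitly: its formula $\norm{T^ne_j}^2=\Lambda_{n+j}/\Lambda_j$ is the one obtained from $\Lambda_n:=(\la_1\cdots\la_n)^2=\norm{T^ne_0}^2$ (so $\Lambda_0=1$), i.e.\ the definition in the statement is off by one. With that $\Lambda$, every diagonal entry, including the one at $e_0$, equals $[\al(\nabla)\Lambda]_j/\Lambda_j$, there is no boundary anomaly, and the equivalence holds provided $\succ 0$ is read as entrywise nonnegativity $\succcurlyeq 0$ (this looser reading is also how the lemma is applied in the proof of Theorem~\ref{T2 B 27 GL}, where $\al(\nabla)\Lambda=(0,\dots,0,1,0,\dots)$ is accepted as $\succ 0$). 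So keep your diagonalization, but replace the telescoping plan by this corrected definition of $\Lambda$, and state the conclusion with $\succcurlyeq$.
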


\begin{proof}
Note that $\Lambda$ is a bounded sequence and
that $T$ is a power bounded operator. Hence $\al[T^*,T]$ and
$\al(\nabla)\Lambda$ are well defined, so we need to prove that
$\al[T^*,T] \geq 0$ if and only if $\al(\nabla)\Lambda \succ 0$.

It is immediate that
\begin{equation}\label{eq. 2}
\al[T^*,T] \geq 0 \quad \Longleftrightarrow \quad
\sum_{n=0}^{\infty} \al_n \norm{T^n h}^2 \geq 0 \quad (\forall h \in
H).
\end{equation}
Next we observe that
\begin{equation}\label{eq. 3}
\sum_{n=0}^{\infty} \al_n \norm{T^n h}^2 \geq 0 \quad (\forall h \in
H) \quad \Longleftrightarrow \quad \sum_{n=0}^{\infty} \al_n
\norm{T^n e_j}^2 \geq 0 \quad (\forall j \geq 0).
\end{equation}
Indeed, the direct implication is obvious. The converse is seen from the
following formula, valid for any vector
$h =
\sum_{n=0}^{\infty} h_n e_n \in H$, where $\{ h_n \}_{n=0}^{\infty}
\in \ell^2$:
\[
\sum_{n=0}^{\infty} \al_n \norm{T^n h}^2 =
\sum_{n=0}^{\infty} \al_n \bigg( \sum_{j=0}^{\infty} \abs{h_j}^2 \norm{T^n e_j}^2 \bigg) \\
= \sum_{j=0}^{\infty} \abs{h_j}^2 \bigg( \sum_{n=0}^{\infty} \al_n \norm{T^n e_j}^2  \bigg)
\]
(we can change the order of summation because the series converge absolutely).

%

Fix $j \geq 0$. For every $n \geq 0$ we have
\[
T^n e_j = \la_{j+1} \la_{j+2} \cdots \la_{j+n} e_{j+n} =
\sqrt{\dfrac{\Lambda_{n+j}}{\Lambda_j}} \,  e_{j+n},
\]
so $\norm{T^n e_j}^2 = \Lambda_{n+j} / \Lambda_j$ and therefore
\[
\sum_{n=0}^{\infty} \al_n \norm{T^n e_j}^2 = \sum_{n=0}^{\infty} \al_n
\frac{\Lambda_{n+j}}{\Lambda_j} = \frac{1}{\Lambda_j}
[\al(\nabla)\Lambda]_j.
\]
Hence it follows that
\begin{equation}\label{eq. 4}
\sum_{n=0}^{\infty} \al_n \norm{T^n e_j}^2 \geq 0 \quad (\forall j
\geq 0) \quad \Longleftrightarrow \quad \al(\nabla)\Lambda \succ 0.
\end{equation}
The statement now follows from \eqref{eq. 2}, \eqref{eq. 3} and \eqref{eq. 4}.
\end{proof}

Before starting the proof of Theorem \ref{T2 B 27 GL}, let us make a
final observation.

\begin{rem}\label{Remark 2}
If $T \in L(H)$ is a power bounded operator and $\al \in
A_W$ then
\[
\begin{split}
    \al[T^*,T] \geq 0 \quad &\Longleftrightarrow \quad \sum_{n=0}^{\infty} \al_n \norm{T^n  h}^2 \geq 0 \quad (\forall h \in H) \\
    &\Longleftrightarrow \quad \sum_{n=0}^{\infty} \al_n \norm{T^{n+j} h}^2 \geq 0 \quad (\forall j \geq 0, \forall h \in H),
\end{split}
\]
where in the last equivalence we just change $h$ by $T^j h$.
Therefore, if we fix $h \in H$ and define the sequence $\Lambda =
\{ \Lambda_n \}_{n=0}^{\infty}$ by $\Lambda_n := \norm{T^n h}^2$
then $\al[T^*,T] \geq 0$ implies that $\al(\nabla)\Lambda \succ 0$.
\end{rem}

\begin{proof}[Proof of Theorem \ref{T2 B 27 GL}]


(i) Suppose that $\gamma \not \succ 0$ and let $\ell$ be the smallest index
such that $\gamma_\ell < 0$. Note that $\ell \geq 1$ because
$\gamma_0 >0$. By Lemma \ref{Lemma 8}, we just need to find a
sequence of positive numbers $\Lambda = \{ \Lambda_n
\}_{n=0}^{\infty}$ that is eventually constant such that
$\alpha(\nabla)\Lambda \succ 0$ and $\tau(\Lambda) \not \succ 0$,
because in that case if we fix an orthonormal basis $\{ e_n
\}_{n=0}^{\infty}$ of $H$ then the weighted shift operator $T$
defined by $Te_n = \sqrt{\Lambda_{n+1}/\Lambda_n}\, e_{n+1}$
satisfies $T \in \mathcal{C}_\alpha \setminus \mathcal{C}_\tau$.
Let us construct that sequence.

Consider the sequence $\Gamma := (\gamma_\ell, \gamma_{\ell
- 1}, \ldots, \gamma_0, 0, 0 , \ldots)$ and define the sequence
$\Psi$ by
\[
\Psi := (\tilde{\tau})^{-1}(\nabla) \Gamma.
\]
Note that $\Psi$ is well defined because $\Gamma$ has only
finitely many nonzero terms and $\tilde{\tau}$ is invertible in a
neighbourhood of the origin.

Since $\Gamma_n = 0$ for $n \geq \ell +1$ we obtain that also
$\Psi_n = 0$ for $n \geq \ell +1$. Finally, let $\Lambda$ be a the
sequence that satisfies
\[
\Psi = (1-\nabla) \Lambda,
\]
with $\Lambda_0$ large enough so that $\Lambda_n > 0$ for every
$n$. Note that $\Lambda_n$ is constant for $n \geq \ell + 1$ and
it also satisfies
\[
\alpha(\nabla) \Lambda = \tilde{\alpha} (\nabla) \Psi = \left(
\dfrac{\tilde{\alpha}}{\tilde{\tau}} \right) (\nabla) \Gamma =
\left( \dfrac{1}{\gamma} \right) (\nabla) \Gamma = (0, \ldots,
0,\overset{\substack{\ell\\\smile}}{1},0,\ldots) \succ 0
\]
and
\[
\tau(\nabla) \Lambda = \tilde{\tau}(\nabla) \Psi = \Gamma \not
\succ 0,
\]
since $\Gamma_0 = \gamma_\ell < 0$. Hence (i) is proved.

(ii) It is clear that $\cC_\al \subset \cC_\tau$ if $\ga \succ 0$. Indeed, if $T \in \cC_\al$, then using Lemma \ref{L3 B 27 GL} (ii) we have
\[
\tau[T^*,T] = (\ga \al)[T^*,T] = \sum_{n=0}^{\infty} \ga_n T^{*n} \al[T^*,T] T^{n} \geq 0,
\]
hence $T \in \cC_\tau$. The other implication follows from (i).

(iii) Note that Lemma \ref{L6 B 27 GL} implies that if $\ga$ is not bounded on
$\D$ then $\ga \not \succ 0$ (recall that $\ga$ has no poles in $[0,1]$), so this statement also follows from (i) and the theorem is proved.
\end{proof}

\begin{rem}
Note that in general, for a rational admissible function $r$, it
is not possible to find an admissible polynomial $p$ such that
$\mathcal{C}_r \subset \mathcal{C}_p$. For example, consider the
rational admissible function
\[
r(t) = \dfrac{1-t}{1-t/2}.
\]
If such a $p$ exists, say $p(t) = (1-t) \tilde{p}(t)$, then by
Theorem \ref{T2 B 27 GL} (i) we should have $\tilde{p}(t)(1-t/2) \succ
0$. But it is immediate to check that this is impossible for any
real polynomial $\tilde{p}$.
\end{rem}

\begin{rem}
Let $\al$ be an admissible function. Since $\wt{\al} \in A_W$, by Theorem \ref{T2 B 27 GL} (i)
we have that $\cC_{1-t} \ss \cC_\al$ if and only if $\wt{\al} \succ 0$. If moreover $\wt{\al}$ has zeroes in $\ol{\D}$, then
by Theorem \ref{T2 B 27 GL} (iii) we deduce that $\cC_{1-t} \subsetneqq \cC_\al$. For example
\[
\cC_{1-t} \subsetneqq \cC_{(1-t)(t^2 + 1/4)}.
\]
\end{rem}

\begin{rem}
Consider the strongly admissible rational function
\[
\alpha(t) = \frac{1-t}{\frac{4}{9} (t-\frac{3}{2})^2}.
\]
Then $\mathcal{C}_\alpha \not \subset \mathcal{C}_{1-t^n}$ for
every $n\geq 1$. Indeed, suppose on the contrary that
$\mathcal{C}_\alpha \subset \mathcal{C}_{1-t^n}$ for some $n\geq
1$. Then, by Theorem \ref{T2 B 27 GL} (i), we should have
\[
\frac{4}{9} \left(t-\frac{3}{2}\right)^2 (1+t+\cdots+t^{n-1})
\succ 0;
\]
however, the coefficient of $t$ in the above expression is $-1/3$. This shows that
there are classes $\cC_\al$ that are not contained in the union of classes
$\cC_{1-t^n}$ over all $n\ge 1$.
\end{rem}

\begin{lemma} \label{C_a union C_b contenido en C_c}
Let $\al$ and $\be$ be admissible functions, and consider the following conditions.

\begin{enumerate}
    \item[(a)] $\al, \be \in \mathcal{H}(\ol{\D})$;
    \item[(b)]  $\al(t)/(1-t)$ or $\be(t)/(1-t)$ has no zeros on $\overline\D$.
\end{enumerate}
If (a) or (b) holds, then there
exists an admissible function $\ga$ such that $\cC_\al \cup \cC_\be \ss \cC_\ga$.

\begin{proof}
(a) Suppose that $\al, \be \in \mathcal{H}(\ol{\D})$. Then
\[
\frac{\al}{\be} = \frac{a}{b}\, \vp
\]
for some polynomials $a$ and $b$ without common roots and a function $\vp \in \mathcal{H}(\ol{\D})$ which is positive on $[0,1]$. By Lemma \ref{L2 B 27 GL}, there exists a function $\psi \in A_W$ such that $\psi \succ 0$ and $\psi \vp \succ 0$. Put $a = a_+ a_- a_{nr}$ where $a_+$ contains the positive roots of $a$, $a_-$ contains the negative roots of $a$ and $a_{nr}$ contains the non-real roots of $a$. If for example $a$ does not have any positive root, then we just put $a_- = 1$. In the same way, put $b = b_+ b_- b_{nr}$.
Applying Corollary \ref{C1 B 27 GL} twice, notice that there exists a polynomial $p$ without roots in $\overline{\D}$ such that $p \succ 0$, $pa_{nr} \succ 0$ and $pb_{nr} \succ 0$. Let
\[
v:= \frac{a_- a_{nr} p}{b_+}, \quad w:= \frac{b_- b_{nr} p}{a_+}.
\]
Note that $v \succ 0, w \succ 0$ and $a/b = v/w$. Now we simply put $\ga := a w \psi = b v \psi \vp$. Since $w \psi \succ 0$ and $v \psi \vp \succ 0$, the result follows from Theorem \ref{T2 B 27 GL} (i).

(b) Suppose that
$\be(t)/(1-t)$ has no zeros on $\overline\D$. Then $\al/\be =: \vp
\in A_W$ is positive on $[0,1]$. By Lemma \ref{L2 B 27 GL}, there
exists a function $\psi \in A_W$ such that $\psi \succ 0$ and
$\psi \vp \succ 0$. Now we put $\ga := \al \psi = \be \vp \psi$.
Since $\psi \succ 0$ and $\vp \psi\succ 0$, the result follows
from Theorem \ref{T2 B 27 GL} (ii).
\end{proof}
\end{lemma}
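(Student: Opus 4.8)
The plan is to reduce everything to the inclusion criterion of Theorem~\ref{T2 B 27 GL}. Since $\ga$ is to be admissible, write $\ga=(1-t)\wt\ga$; then for the two inclusions $\cC_\al\ss\cC_\ga$ and $\cC_\be\ss\cC_\ga$ the relevant quotients are $\wt\ga/\wt\al$ and $\wt\ga/\wt\be$, and part~(ii) of Theorem~\ref{T2 B 27 GL} tells me that each inclusion holds as soon as the corresponding quotient lies in $A_W$ and is $\succ0$. So I would first set $\wt\ga=\wt\al\,u$ and reformulate the goal as: produce $u\in A_W$ with $u\succ0$ and $(\wt\al/\wt\be)\,u\succ0$ (also in $A_W$), with $\wt\ga$ zero-free on $[0,1]$ so that $\ga$ is genuinely admissible. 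Everything then hinges on controlling the single quotient $\vp:=\wt\al/\wt\be$.

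The case~(b) is the clean one and I would dispatch it first. If, say, $\wt\be$ has no zeros on $\ol\D$, then $1/\wt\be\in A_W$, so $\vp=\wt\al/\wt\be\in A_W$ and is positive on $[0,1]$ (both $\wt\al,\wt\be$ are). Lemma~\ref{L2 B 27 GL} then hands me a $\psi\in A_W$ with $\psi\succ0$ and $\psi\vp\succ0$. Taking $u=\psi$, i.e.\ $\ga:=\al\psi$, gives $\wt\ga/\wt\al=\psi\succ0$ and $\wt\ga/\wt\be=\vp\psi\succ0$, both in $A_W$, and Theorem~\ref{T2 B 27 GL}(ii) closes both inclusions.

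For case~(a) the difficulty is that when $\al,\be\in\mathcal{H}(\ol\D)$ the factor $\wt\be$ may vanish inside $\ol\D$, so $\vp$ is only meromorphic and the previous argument breaks. My plan is to peel off the finitely many zeros and poles in $\ol\D$ as a rational function, writing $\wt\al/\wt\be=(a/b)\vp$ with $a,b$ coprime real polynomials and $\vp\in\mathcal{H}(\ol\D)$ zero-free, normalized to be positive on $[0,1]$. I would sort the roots of $a$ (and of $b$) into negative, positive and non-real groups $a_-,a_+,a_{nr}$: negative-root factors already have nonnegative coefficients, hence are $\succ0$; positive-root factors have their roots outside $[0,1]$ (because $\wt\al,\wt\be>0$ there) and so give reciprocals $\succ0$ when relegated to a denominator; and for the non-real factors $t^2-2\Re(\la)t+|\la|^2$ I would invoke Corollary~\ref{C1 B 27 GL} to get one polynomial $p\succ0$ with $pa_{nr}\succ0$ and $pb_{nr}\succ0$. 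Assembling $v:=a_-a_{nr}p/b_+$ and $w:=b_-b_{nr}p/a_+$ yields $v,w\succ0$ with $a/b=v/w$; combining with the $\psi$ from Lemma~\ref{L2 B 27 GL} that absorbs $\vp$, I define $\ga$ as the resulting common admissible multiple of $\al$ and $\be$, so that the two quotients $\wt\ga/\wt\al$ and $\wt\ga/\wt\be$ come out as products of the factors $w,\psi$ and $v,\psi,\vp$, all $\succ0$, and Theorem~\ref{T2 B 27 GL}(ii) again applies.

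The hard part will be precisely this bookkeeping in case~(a): distributing $a$ and $b$ between numerator and denominator of the two quotients so that \emph{every} resulting factor has nonnegative Taylor coefficients simultaneously for both inclusions. The three delicate points are (i) the possibly negative middle coefficient of a non-real quadratic, neutralized by the doubling trick of Lemma~\ref{L1 B 27 GL} behind Corollary~\ref{C1 B 27 GL}; (ii) the requirement that a positive root be strictly outside $\ol\D$ before its reciprocal series is $\succ0$, which I must derive from admissibility; and (iii) the final check that $\ga$ really is admissible (simple zero at $t=1$, positive on $[0,1)$) and that both quotients lie in $A_W$, so that the \emph{sufficiency} direction~(ii) of the inclusion theorem is legitimately available rather than only its converse~(i).
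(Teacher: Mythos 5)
Your proposal is correct and takes essentially the same route as the paper's own proof: case (b) via Lemma \ref{L2 B 27 GL} and $\ga=\al\psi$, and case (a) via the identical decomposition $\wt{\al}/\wt{\be}=(a/b)\vp$, the splitting of $a,b$ by root type, Corollary \ref{C1 B 27 GL} for the non-real factors, and the assembly $v,w$ with $\ga=\al w\psi=\be v\vp\psi$, the delicate points you flag (positive roots forced outside $\ol{\D}$ by positivity on $[0,1]$, admissibility of $\ga$, membership of the quotients in $A_W$) being exactly the ones the paper's argument relies on. One small point in your favor: citing Theorem \ref{T2 B 27 GL}(ii) (the sufficiency direction) in case (a) is the correct reference, whereas the paper's appeal there to part (i) appears to be a slip.
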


\begin{cor}
\label{cor orth sum matr}
Let $T_1$ be a complex square matrix and
$T_2$ be a Hilbert space operator. If $T_1$ has no
Jordan blocks and $\sigma(T_1) \ss \T$, whereas
the spectral radius of $T_2$ is less than $1$, then there exists
an admissible function $\ga$ such that $T_1 \oplus T_2 \in \cC_\ga$.
\end{cor}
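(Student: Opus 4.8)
The plan is to handle the two summands separately, placing each into a class $\cC$ attached to an admissible \emph{polynomial} weight, and then to merge the two classes by invoking Lemma \ref{C_a union C_b contenido en C_c}(a). The point of aiming for polynomial weights is precisely that polynomials lie in $\mathcal{H}(\ol{\D})$, so that the (easier) hypothesis (a) of that lemma, rather than the more restrictive hypothesis (b), will be available.

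First I would dispose of $T_1$. Since $T_1$ has no nontrivial Jordan blocks and $\sigma(T_1)\ss\T$, Lemma \ref{matrix without Jordan blocks} furnishes an admissible polynomial $p$ with $p[T_1^*,T_1]=0$; in particular $T_1\in\cC_p$. Next I would dispose of $T_2$. Because the spectral radius of $T_2$ is strictly less than $1$, Proposition \ref{properties of class C_a}(e) provides an integer $n\ge 1$ with $T_2\in\cC_{1-t^n}$. Here the weight factors as $1-t^n=(1-t)(1+t+\cdots+t^{n-1})$, whose $\wt{\al}(t)=1+t+\cdots+t^{n-1}$ is positive on $[0,1]$ with $\wt{\al}(0)=1>0$; thus $1-t^n$ is admissible, as required.

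The merging step is where the two threads come together. Both $p$ and $1-t^n$ are admissible polynomials, hence both belong to $\mathcal{H}(\ol{\D})$, so condition (a) of Lemma \ref{C_a union C_b contenido en C_c} holds with $\al:=p$ and $\be:=1-t^n$. That lemma then yields an admissible function $\ga$ such that $\cC_p\cup\cC_{1-t^n}\ss\cC_\ga$. Consequently $T_1\in\cC_p\ss\cC_\ga$ and $T_2\in\cC_{1-t^n}\ss\cC_\ga$, and finally Proposition \ref{properties of class C_a}(b), the stability of $\cC_\ga$ under orthogonal sums, gives $T_1\oplus T_2\in\cC_\ga$, which is the claim.

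I do not expect a genuine obstacle here, since the statement follows by chaining three already-established results; the only items that need a line of verification are that $1-t^n$ really is admissible (immediate from the factorization above) and that both weights extend analytically across $\ol{\D}$ so that clause (a) of Lemma \ref{C_a union C_b contenido en C_c} applies. Both are clear from the fact that $p$ and $1-t^n$ are polynomials, so the argument reduces to careful bookkeeping rather than any new estimate.
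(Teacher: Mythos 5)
Your proposal is correct and follows exactly the paper's own proof, which cites the same chain of results: Lemma \ref{matrix without Jordan blocks} for $T_1$, Proposition \ref{properties of class C_a}(e) for $T_2$, Lemma \ref{C_a union C_b contenido en C_c} to merge the two classes, and Proposition \ref{properties of class C_a}(b) for the orthogonal sum. Your additional verifications (admissibility of $1-t^n$ and the applicability of clause (a) of the merging lemma, which is indeed the relevant one since both weights have zeros on $\T$ after dividing by $1-t$) are accurate details that the paper leaves implicit.
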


\begin{proof}
This follows immediately from
Lemma \ref{matrix without Jordan blocks}, Proposition \ref{properties of class C_a} (b) and (e), and Lemma~\ref{C_a union C_b contenido en C_c}.
\end{proof}


\section{Existence of the limit of $\norm{T^n h}^2$}
\label{sec-existence-limit}

As it was explained at the end of Section~\ref{sec-abstr-def-oper},
in general, the limit of norms $\norm{T^n h}$
as $n\to\infty$ (where $h\in H$) does not exist.
In this section we prove the following result:

\begin{thm}\label{Theorem Extra 1}
Let $\alpha(t) = (1-t) \tilde{\alpha}(t)$, where
$\alpha$ is strongly admissible,
and let $T \in
\mathcal{C}_\alpha$. Then, for every $h \in H$, there exists
the limit $\lim_{n \to \infty} \norm{T^n h}^2$.
\end{thm}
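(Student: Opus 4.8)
The plan is to show that for each $h \in H$, the sequence $\{\norm{T^n h}^2\}_{n\ge 0}$ converges by exploiting the extra rigidity that strong admissibility gives: $\wt\al$ has no roots on the unit circle $\T$. I would fix $h$ and set $\Lambda_n := \norm{T^n h}^2$, so that $\Lambda = \{\Lambda_n\}_{n\ge 0}$ is a bounded sequence (since $T$ is power bounded). By Remark~\ref{Remark 2}, the positivity condition $\al[T^*,T]\ge 0$ translates into $\al(\nabla)\Lambda \succcurlyeq 0$, where $\nabla$ is the backward shift on one-sided sequences. The goal is then purely a statement about bounded sequences: if $\Lambda \in \ell^\infty$ is nonnegative and $\al(\nabla)\Lambda \succcurlyeq 0$ for a strongly admissible $\al$, then $\lim_n \Lambda_n$ exists.

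The key idea is to peel off the factorization $\al(t) = (1-t)\wt\al(t)$ and use the invertibility of $\wt\al$ away from its zeros. First I would write $\al(\nabla)\Lambda = \wt\al(\nabla)\big[(1-\nabla)\Lambda\big]$ using Lemma~\ref{L3 B 27 GL}-type reasoning (here applied to the shift $\nabla$ on sequences rather than to an operator $T$; the same algebraic identity holds because $\nabla$ is power bounded on $\ell^\infty$ and $\al\in A_W$). Set $\bfg := (1-\nabla)\Lambda$, the sequence of successive differences $\Lambda_n - \Lambda_{n+1}$. I already know from Theorem~\ref{thm new norm} that the series $\sum_n \norm{DT^n h}^2$ converges, and since $\norm{Dh}^2 = [\al(\nabla)\Lambda]_0$ evaluated along the orbit gives exactly $\al(\nabla)\Lambda$ (as in the proof of Theorem~\ref{Theorem 4}), the nonnegative sequence $\al(\nabla)\Lambda$ is summable: $\sum_n [\al(\nabla)\Lambda]_n < \infty$. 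So I have a summable nonnegative sequence $\wt\al(\nabla)\bfg$, and I want to deduce that $\bfg$ itself is summable, which would force $\Lambda_n = \Lambda_0 - \sum_{k<n} g_k$ to converge.

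The main obstacle, and the place where strong admissibility is essential, is inverting $\wt\al(\nabla)$. Because $\wt\al$ has no zeros on $\T$, I can factor $\wt\al(t) = q(t)\ga(t)$ where $q$ is a polynomial carrying the zeros of $\wt\al$ inside $\D$ and $\ga \in A_W$ has no zeros on $\ol\D$, so that $1/\ga \in A_W$; this is the same decomposition used in the proof of Theorem~\ref{Theorem 4}. Applying $1/\ga(\nabla)$ to the summable sequence $\wt\al(\nabla)\bfg = \ga(\nabla)\big[q(\nabla)\bfg\big]$, and using that convolution by an $A_W$ sequence preserves $\ell^1$, I reduce to showing that $q(\nabla)\bfg \in \ell^1$ implies $\bfg \in \ell^1$. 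Writing $q(t) = \prod_j (1 - a_j t)$ with $\abs{a_j} > 1$, this reduces by induction to the single-factor case: if $\{g_n - a g_{n+1}\}_n \in \ell^1$ with $\abs{a}>1$ and $\{g_n\}$ bounded, then $\{g_n\}\in \ell^1$. The hard part is this last elementary but delicate step; I expect it to follow by solving the backward recursion $g_n = a g_{n+1} + \text{(}\ell^1 \text{ term)}$, summing the geometric series in $1/a$ (which converges since $\abs{a}>1$), and using boundedness of $\bfg$ to discard the boundary contribution at infinity, thereby expressing each $g_n$ as an absolutely convergent combination that is globally summable. Once $\bfg \in \ell^1$ is established, the convergence of $\Lambda_n$ is immediate, completing the proof.
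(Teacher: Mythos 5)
Your overall plan is sound and genuinely different from the paper's proof, but the one step you yourself flag as ``the hard part'' is argued in the wrong direction and fails as written. From $h_n = g_n - a g_{n+1}$ with $\abs{a}>1$, backward substitution gives $g_n = \sum_{j=0}^{m-1} a^j h_{n+j} + a^m g_{n+m}$: the powers occurring are $a^j$, not $a^{-j}$, so there is no convergent geometric series in $1/a$ to sum, and the boundary term $a^m g_{n+m}$ cannot be discarded using boundedness of $\bfg$. Indeed, take $h \equiv 0$ and $g_n = a^{-n}$: this $\bfg$ is bounded, $a^m g_{n+m} = a^{-n}$ for every $m$, and discarding the ``boundary contribution'' would yield the false conclusion $\bfg = \mathbf{0}$. (The same example shows that $I - a\nabla$ has a nonzero kernel, lying inside $\ell^1$, so it cannot be literally inverted.) The repair is to solve the recursion \emph{forward} from $g_0$ rather than backward from infinity: $g_{n+1} = a^{-1}(g_n - h_n)$ gives
\[
g_n = a^{-n}g_0 - \sum_{k=0}^{n-1} a^{k-n} h_k,
\qquad\text{whence}\qquad
\sum_{n\ge 0}\abs{g_n} \;\le\; \frac{\abs{a}\,\abs{g_0}}{\abs{a}-1} + \frac{1}{\abs{a}-1}\sum_{k\ge 0}\abs{h_k}<\infty .
\]
Equivalently, in generating functions, $zh(z) = (z-a)g(z) + ag_0$, so $g(z) = \bigl(zh(z)-ag_0\bigr)/(z-a)$, and $1/(z-a) \in \mathcal{H}(\ol{\D}) \ss A_W$ because $\abs{a}>1$; multiplication in the Wiener algebra then gives $\bfg \in \ell^1$. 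Note that boundedness of $\bfg$ is never needed, and that this is exactly the reversed-polynomial device of the paper's Lemma \ref{Lemma 4 Extra 1}, transplanted from convergent sequences to $\ell^1$. All your other ingredients are correct: the identity $\norm{DT^nh}^2 = [\al(\nabla)\Lambda]_n$, the summability of $\sum_n \norm{DT^nh}^2$ from Theorem \ref{thm new norm}, the multiplicativity of the $\nabla$-calculus (the paper's Lemma \ref{Lemma 1 Extra 1}), the factorization $\wt{\al} = q\ga$ with $1/\ga \in A_W$, and the fact that $f(\nabla)$ maps $\ell^1$ into $\ell^1$ for $f \in A_W$.

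Once repaired, your route is worth comparing with the paper's. You feed in a stronger hypothesis --- the summability of $\al(\nabla)\Lambda$, which rests on Theorem \ref{thm new norm} and hence ultimately on the factorization Lemma \ref{L2 B 27 GL} --- and you obtain a stronger conclusion: $(1-\nabla)\Lambda \in \ell^1$, i.e.\ the sequence $\norm{T^nh}^2$ has bounded variation, not merely a limit. The paper uses only the pointwise nonnegativity $\al(\nabla)\Lambda \succcurlyeq 0$ together with a monotonicity trick: writing $\al(\nabla)\Lambda = (1-\nabla)\,q(\nabla)\,c$ with $c = \ga(\nabla)\Lambda$, the sequence $q(\nabla)c$ is nonincreasing and bounded, hence convergent, and then $q(\nabla)$ and $\ga(\nabla)$ are inverted on \emph{convergent} sequences (Lemmas \ref{Lemma 4 Extra 1} and \ref{Lemma 2 Extra 1}). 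So the paper's proof is self-contained within Section \ref{sec-existence-limit} and independent of the defect-operator machinery, whereas yours is shorter once Theorem \ref{thm new norm} is available and delivers bounded variation as a bonus.
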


We will use the backward shift  $\nabla$ and
the shift $\nabla_-$, acting on one-sided bounded sequences $\{ a_n \}_{n=0}^{\infty}$.
They are given by $[\nabla a]_n = a_{n+1}$ for every $n \geq 0$ and
$[\nabla_- a]_0 = 0$, $[\nabla_-a]_n = a_{n-1}$ for every $n \geq
1$.

%
%
%
%

If we identify the sequence $a = \{ a_n \}_{n=0}^{\infty}$ with
the power series $a(z) = \sum_{n=0}^{\infty} a_n z^n$, then we can
identify the operator $\nabla$ and $\nabla_-$ with the operators
given by
\[
(\nabla a)(z) = \frac{a(z)-a(0)}{z}, \quad (\nabla_- a)(z) =
za(z).
\]
It is clear that $\nabla \nabla_- = I$ and $\nabla_-\nabla a (z) =
a(z)-a(0)$.

Given a function $f \in A_W$,
the operators $f(\nabla)$ and
$f(\nabla_-)$ are well-defined.
Note that $c=f(\nabla_-)a$ is given by
$c_n := \sum_{j=0}^{n} f_j a_{n-j}$. In terms of
power series, one just has $c(z) = f(z)a(z)$.
We can say, in fact, that in the power
series representation, $f(\nabla_-)$ is an analytic Toeplitz
operator and $f(\nabla)$ is an anti-analytic Toeplitz operator.

The following formula will be useful:
\begin{equation}\label{eq 1 Extra 1}
\nabla_{-}^{k} \nabla^j a(z) = z^{k-j}(a(z) - a_{j-1}z^{j-1} -
\cdots - a_1 z - a_0).
\end{equation}
We need some auxiliary lemmas.

\begin{lemma}\label{Lemma 1 Extra 1}
Let $f,g \in A_W$ and let $a \in \ell^{\infty}$.
Then
\[
f(\nabla)[g(\nabla)a] = (fg)(\nabla)a.
\]
\end{lemma}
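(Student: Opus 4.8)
The plan is to prove the identity by evaluating both sides at an arbitrary index $n$, recognizing that each equals the same double series, with the interchange of summations being the only point requiring care. Note first that since $A_W$ is a Banach algebra, $fg \in A_W$, so both $f(\nabla)$, $g(\nabla)$ and $(fg)(\nabla)$ are well-defined bounded operators on $\ell^\infty$, and every expression below makes sense.

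First I would expand the left-hand side. Setting $b := g(\nabla)a$, the definition gives $b_p = \sum_{j=0}^{\infty} g_j a_{p+j}$, and therefore
\[
[f(\nabla)b]_n = \sum_{i=0}^{\infty} f_i b_{n+i} = \sum_{i=0}^{\infty} \sum_{j=0}^{\infty} f_i g_j a_{n+i+j}.
\]
For the right-hand side, the Taylor coefficients of $fg$ are given by the Cauchy product $(fg)_m = \sum_{k=0}^{m} f_k g_{m-k}$, so
\[
[(fg)(\nabla)a]_n = \sum_{m=0}^{\infty} (fg)_m \, a_{n+m} = \sum_{m=0}^{\infty} \Big( \sum_{i+j=m} f_i g_j \Big) a_{n+m}.
\]

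These are the same double series $\sum_{i,j \ge 0} f_i g_j a_{n+i+j}$ organized in two different ways: the first sums over $i$ and then $j$ freely, while the second collects the terms along the antidiagonals $i+j=m$. To pass between the two groupings I would invoke absolute convergence. Since $f,g \in A_W$ we have $\sum_i |f_i| < \infty$ and $\sum_j |g_j| < \infty$, and since $a \in \ell^\infty$ we have $|a_{n+i+j}| \le \norm{a}_\infty$; hence
\[
\sum_{i,j \ge 0} |f_i|\, |g_j|\, |a_{n+i+j}| \le \norm{a}_\infty \Big( \sum_i |f_i| \Big) \Big( \sum_j |g_j| \Big) < \infty.
\]
By Fubini's theorem for series, the double series converges absolutely and may be summed in any order, in particular along the antidiagonals. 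This identifies the two expressions for every $n \ge 0$ and proves the lemma.

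The argument is essentially routine; the only step demanding attention is the interchange of summation, which is fully justified by the absolute summability of the Taylor coefficients of $f$ and $g$ together with the boundedness of $a$. I do not expect any deeper obstacle to arise.
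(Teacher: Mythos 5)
Your proof is correct and follows exactly the route the paper intends: the paper's own proof is the one-line remark that the identity is ``immediate just doing a change of summation indices,'' and your argument simply carries out that index change in detail, with the absolute convergence bound $\norm{a}_\infty \norm{f}_{A_W} \norm{g}_{A_W}$ justifying the rearrangement via Fubini. Nothing to add.
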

The proof is immediate just doing a change of summation indices.

\begin{lemma}\label{Lemma 2 Extra 1}
Let $f \in A_W$ and let $a \in \ell^{\infty}$ be a
convergent sequence, say $a_n \to a_\infty$.
\begin{enumerate}
\item[\textnormal{\textbf{(i)}}] \enspace If $b = f(\nabla) a$,
then $b_n \to f(1)a_\infty$.

\item[\textnormal{\textbf{(ii)}}] \enspace The same is true for
$\nabla_-$ in place of $\nabla$. Namely, if $c = f(\nabla_-) a$,
then also $c_n \to f(1)a_\infty$.
\end{enumerate}
\end{lemma}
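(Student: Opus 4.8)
The plan is to recognize both statements as the discrete analogue of the elementary fact that convolving a convergent sequence with a summable kernel $\{f_j\}$ produces a sequence converging to $f(1)a_\infty = \big(\sum_j f_j\big)a_\infty$. Both reduce to an $\ep$-type splitting argument based on $\sum_j |f_j| = \norm{f}_{A_W} < \infty$; the only real difference is how the kernel index interacts with the index of the shifted sequence.

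For (i), I would start from $(f(\nabla)a)_n = \sum_{j=0}^\infty f_j a_{n+j}$ and $f(1) = \sum_{j=0}^\infty f_j$, writing
\[
b_n - f(1)a_\infty = \sum_{j=0}^\infty f_j\,(a_{n+j} - a_\infty).
\]
The key observation is that every index appearing is $n+j \ge n$, so a single tail estimate on $a$ suffices: given $\ep>0$, pick $N_0$ with $|a_m - a_\infty| < \ep$ for all $m \ge N_0$; then for $n \ge N_0$ one has $|a_{n+j}-a_\infty| < \ep$ simultaneously for all $j \ge 0$, whence $|b_n - f(1)a_\infty| \le \ep\,\norm{f}_{A_W}$. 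Letting $\ep \to 0$ gives the claim, with no splitting of the kernel required.

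For (ii) the situation is genuinely a convolution: $c_n = \sum_{j=0}^n f_j a_{n-j}$, and now the index $n-j$ runs all the way down to $0$, so one cannot force every $a_{n-j}$ to be close to $a_\infty$. The plan is to write
\[
c_n - f(1)a_\infty = \sum_{j=0}^{n} f_j\,(a_{n-j}-a_\infty) \;-\; a_\infty\sum_{j=n+1}^\infty f_j,
\]
where the last term tends to $0$ as a tail of $\sum_j |f_j|$. For the first term I would split at $j = n - N_0$: on the range $j \le n-N_0$ we have $n-j \ge N_0$, so $|a_{n-j}-a_\infty|<\ep$ and this part is bounded by $\ep\,\norm{f}_{A_W}$; on the range $j > n-N_0$ I bound $|a_{n-j}-a_\infty|$ by $M := \sup_m |a_m - a_\infty| < \infty$ and use that $\sum_{j > n-N_0}|f_j| \to 0$ as $n\to\infty$. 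Taking $\limsup_n$ leaves at most $\ep\,\norm{f}_{A_W}$, and $\ep$ arbitrary finishes the proof.

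The only point requiring any care — the main obstacle — is this double splitting in (ii): the contribution of small values of $n-j$, where $a_{n-j}$ is not yet near its limit, must be absorbed by the smallness of the kernel tail $\sum_{j>n-N_0}|f_j|$, which is exactly why the summability $f \in A_W$ is used rather than mere boundedness of the coefficients. Part (i), by contrast, is immediate once one notes that the backward shift $\nabla$ only ever looks forward.
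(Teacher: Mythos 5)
Your proof is correct, and part (i) is exactly the paper's argument: the paper fixes $N$ with $|a_n-a_\infty|<\varepsilon/\sum_j|f_j|$ for $n\ge N$ and concludes $|b_n-f(1)a_\infty|\le\sum_j|f_j|\,|a_{n+j}-a_\infty|<\varepsilon$, relying on the same observation that $\nabla$ only looks forward. The paper dismisses (ii) as straightforward and omits it; your double-splitting convolution argument (tail of the kernel plus tail of the sequence, with the boundary term absorbed by $\sum_{j>n-N_0}|f_j|\to 0$) is precisely the standard proof that was left out, and it is correct.
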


\begin{proof}
Both statements are straightforward, and we will only check (i).
Fix $\varepsilon > 0$ and let $\abs{a_n - a_\infty}< \varepsilon /
\sum_{j=0}^{\infty} \abs{f_j}$ for every $n \geq N$. Then
\[
\abs{b_n - f(1)a_\infty} \leq \sum_{j=0}^{\infty} \abs{f_j}
\abs{a_{n+j}-a_\infty} < \varepsilon
\]
for every $n \geq N$.
\end{proof}

%
%

We can rephrase part (ii) of last lemma in terms of formal power
series as follows.

\begin{cor}\label{Corollary 1 Extra 1}
Let $f \in A_W$ and let $a(z) = \sum_{n=0}^{\infty}
a_n z^n$ be a formal power series where the sequence $\{ a_n
\}_{n=0}^{\infty}$ converges to some number $a_\infty \in \R$. If
$b(z) = f(z)a(z)$, then $b_n \to f(1)a_\infty$.
\end{cor}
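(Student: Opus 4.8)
The plan is to recognize that this corollary is nothing more than the power-series transcription of Lemma~\ref{Lemma 2 Extra 1}(ii), so the entire task reduces to matching up the two formulations. First I would observe that, since $\{a_n\}$ converges, it is bounded, and hence $a \in \ell^\infty$; thus the hypotheses of Lemma~\ref{Lemma 2 Extra 1} are satisfied.

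The one substantive step is to identify the coefficient sequence $\{b_n\}$ of the product series $b(z) = f(z)a(z)$ with the sequence $f(\nabla_-)a$. By the Cauchy product rule for formal power series, the coefficients of $b$ are
\[
b_n = \sum_{j=0}^{n} f_j a_{n-j}.
\]
But this is exactly the formula recorded just before Lemma~\ref{Lemma 2 Extra 1} for the action of the shift operator, namely $(f(\nabla_-)a)_n = \sum_{j=0}^{n} f_j a_{n-j}$. Hence $b = f(\nabla_-)a$ as sequences.

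With this identification in hand, the conclusion is immediate: applying Lemma~\ref{Lemma 2 Extra 1}(ii) with $c := b$ gives $b_n \to f(1)a_\infty$, which is precisely the claim. I do not expect any genuine obstacle here, since the only content is translating between the sequence notation $f(\nabla_-)a$ and the power-series notation $f(z)a(z)$, an identification already made explicit in the text; everything analytic has been done in Lemma~\ref{Lemma 2 Extra 1}.
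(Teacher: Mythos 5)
Your proof is correct and is exactly the paper's argument: the paper introduces the corollary with the words ``we can rephrase part (ii) of last lemma in terms of formal power series,'' which is precisely your identification of the Cauchy-product coefficients $b_n = \sum_{j=0}^{n} f_j a_{n-j}$ with $(f(\nabla_-)a)_n$, followed by an application of Lemma~\ref{Lemma 2 Extra 1}(ii). The boundedness remark ensuring $a \in \ell^\infty$ is a sensible check that the paper leaves implicit.
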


\begin{lemma}\label{Lemma 4 Extra 1}
Let $q$ be a real polynomial whose roots are in $\D$. Let $a \in
\ell^{\infty}$ and put $b = q(\nabla)a$. If $b_n \to b_\infty \in
\R$, then $a_n \to b_\infty / q(1)$.
\end{lemma}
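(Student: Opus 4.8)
The plan is to prove the statement by factoring $q$ over $\C$ into linear factors and reducing to the single-root case, which is a discrete stable-recurrence argument. Since factoring out one complex root of a real polynomial leaves a polynomial with complex coefficients, I would first restate the assertion (as the induction hypothesis) for an arbitrary complex polynomial $q$ with all roots in $\D$ and for complex sequences $a\in\ell^\infty$; the claimed statement then follows because for real $q$ both $q(1)$ and $b_\infty$ are real. Note that $q(1)\ne0$, since $1$ is not a root (all roots lie in $\D$), so the target value $b_\infty/q(1)$ is well defined.

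The heart of the matter is the single-factor claim: if $\abs{\rho}<1$ and $c_n:=a_{n+1}-\rho a_n\to c_\infty$, then $a_n\to c_\infty/(1-\rho)$. To prove it I would set $L:=c_\infty/(1-\rho)$ and $e_n:=a_n-L$; a direct computation gives $e_{n+1}=\rho e_n+\delta_n$ with $\delta_n:=c_n-c_\infty\to0$, whence $e_n=\rho^n e_0+\sum_{k=0}^{n-1}\rho^{\,n-1-k}\delta_k$. The first term tends to $0$ because $\abs{\rho}<1$, and the convolution term tends to $0$ by splitting the sum at a large index $N$ (beyond which $\abs{\delta_k}<\varepsilon$), bounding the tail by $\varepsilon/(1-\abs{\rho})$ while the finitely many leading terms carry a factor $\rho^{\,n-1-k}\to0$. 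This is the one genuinely analytic step; everything else is algebra.

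Given the claim, I would induct on $d=\deg q$. For $d=0$, $q\equiv q(1)$ is a nonzero constant and $a_n=b_n/q(1)\to b_\infty/q(1)$. For $d\ge1$, write $q(t)=(t-\rho)\tilde q(t)$ with $\abs{\rho}<1$ and $\deg\tilde q=d-1$; by Lemma \ref{Lemma 1 Extra 1} we have $q(\nabla)=\tilde q(\nabla)(\nabla-\rho I)$. Setting $c:=(\nabla-\rho)a\in\ell^\infty$, we obtain $\tilde q(\nabla)c=q(\nabla)a=b$, so the induction hypothesis applied to $\tilde q$ gives $c_n\to b_\infty/\tilde q(1)$. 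Since $c_n=a_{n+1}-\rho a_n$, the single-factor claim then yields $a_n\to b_\infty/\big(\tilde q(1)(1-\rho)\big)=b_\infty/q(1)$, completing the induction.

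The main obstacle, such as it is, is the single-factor claim — the observation that a linear recurrence with contraction factor $\abs{\rho}<1$ driven by a convergent sequence produces a convergent output with the expected limit; this is a discrete Tauberian fact, and it is the only place where the hypothesis that all roots lie in $\D$ is used. The remaining care is purely bookkeeping: stating the induction over $\C$ so that complex conjugate root pairs can be peeled off one at a time, and invoking Lemma \ref{Lemma 1 Extra 1} to factor $q(\nabla)$ exactly as the scalar polynomial factors.
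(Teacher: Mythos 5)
Your proposal is correct, but it follows a genuinely different route from the paper's proof. The paper does not induct on the degree: it applies $\nabla_-^{s}$ to the identity $b=q(\nabla)a$ and uses \eqref{eq 1 Extra 1} to turn everything into an identity of power series, $z^s b(z)=\tilde q(z)a(z)-r(z)$, where $\tilde q(t)=q_0t^s+q_1t^{s-1}+\cdots+q_s$ is the reversed polynomial and $\deg r\le s-1$. The hypothesis that the roots of $q$ lie in $\D$ enters there exactly once: it guarantees that $\tilde q$ has no zeros in $\overline\D$, hence $1/\tilde q\in A_W$, so one can solve $a(z)=\bigl(z^s/\tilde q(z)\bigr)b(z)+r(z)/\tilde q(z)$ and read off the limit of $a_n$ from the Abelian statement of Corollary \ref{Corollary 1 Extra 1} (the Taylor coefficients of $r/\tilde q$ tend to $0$, and $\tilde q(1)=q(1)$). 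In other words, the paper converts the Tauberian direction (convergence of $b$ forces convergence of $a$) into an Abelian one by inverting the Toeplitz symbol in the Wiener algebra, handling all roots at once. You instead peel off one root at a time and prove the needed Tauberian fact directly: the recursion $e_{n+1}=\rho e_n+\delta_n$ with $\abs{\rho}<1$ and $\delta_n\to0$ forces $e_n\to0$, which is exactly where your hypothesis on the roots is used. What your approach buys is elementarity and self-containedness: you need no Wiener-algebra inversion and no power-series bookkeeping, only the trivial polynomial case of Lemma \ref{Lemma 1 Extra 1} and a geometric-series estimate; the cost is the (correctly identified and handled) need to state the induction over complex polynomials and complex bounded sequences, since factoring a real polynomial produces complex factors. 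What the paper's approach buys is economy inside its own framework: it reuses Corollary \ref{Corollary 1 Extra 1} and formula \eqref{eq 1 Extra 1}, which were set up anyway, and it makes the structural mechanism visible, namely that $q(\nabla)$ is an anti-analytic Toeplitz operator whose reversed symbol is invertible on $\overline\D$.
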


\begin{proof}
Put $q(t) = q_s t^s + \cdots + q_1 t + q_0$. Then
\[
\nabla_{-}^{s} b = (q_0 \nabla_{-}^{s} + q_1 \nabla_{-}^{s} \nabla
+ \cdots + q_s \nabla_{-}^{s} \nabla^s)a,
\]
which can be written in formal power series using \eqref{eq 1
Extra 1} as
\[
z^s b(z) = q_0 z^s a(z) + q_1 z^{s-1} (a(z)-a_0) + \cdots + q_s
(a(z) - a_{s-1}z^{s-1} - \cdots -a_1 z - a_0).
\]
So if we put $\tilde{q}(t) = q_0 t^s + q_1 t^{s-1} + \cdots +
q_s$, then
\[
z^s b(z) = \tilde{q}(z) a(z) - r(z)
\]
for some polynomial $r$ of degree at most $s-1$. Note that
$\tilde{q}$ has no roots in $\overline{\D}$, hence $1/\tilde{q}
\in A_W$ and therefore
\[
a(z) = \frac{z^s}{\tilde{q}(z)}\, b(z) + \frac{r(z)}{\tilde{q}(z)}.
\]
Since $r/\tilde{q} \in A_W$, its $n$-th Taylor
coefficient tends to $0$. Now the statement follows using the
previous corollary and that $\tilde{q}(1) = q(1)$.
\end{proof}

\begin{lemma}\label{Lemma 5 Extra 1}
Let $q$ be a real polynomial whose roots are in $\D$ and put $Q(t)
= (1-t)q(t)$. Let $a \in \ell^{\infty}$. If $b = Q(\nabla) a$ and
$b_n \geq 0$, then there exists $\lim a_n$.
\end{lemma}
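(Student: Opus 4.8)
The plan is to reduce the statement to the already-established Lemma~\ref{Lemma 4 Extra 1} by peeling off the factor $(1-t)$ from $Q$. First I would set $c := q(\nabla) a$. Since $q$ is a polynomial, $q(\nabla)$ is a finite linear combination of the powers $\nabla^k$, each of which is a contraction on $\ell^\infty$; hence $c \in \ell^\infty$ as well. Using the factorization $Q(t) = (1-t)q(t)$ together with Lemma~\ref{Lemma 1 Extra 1} (applied with $f(t)=1-t$ and $g=q$), one obtains $b = Q(\nabla) a = (1-\nabla) q(\nabla) a = (1-\nabla) c$, that is,
\[
b_n = c_n - c_{n+1}, \qquad n \ge 0.
\]

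Next I would exploit the hypothesis $b_n \ge 0$: it says precisely that $c_{n+1} \le c_n$ for every $n$, so $\{c_n\}$ is a non-increasing sequence. Being also bounded (it lies in $\ell^\infty$), it must converge, say $c_n \to c_\infty \in \R$. Finally I would apply Lemma~\ref{Lemma 4 Extra 1} to the relation $c = q(\nabla) a$: since all roots of $q$ lie in $\D$ we have $q(1) \ne 0$, and the lemma yields $a_n \to c_\infty / q(1)$. In particular $\lim a_n$ exists, which is the desired conclusion.

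There is no serious obstacle in this argument; its whole weight rests on Lemma~\ref{Lemma 4 Extra 1}, whose hypothesis that $q$ has all its roots inside $\D$ is exactly what guarantees both $q(1) \ne 0$ and the applicability of that lemma. The one conceptual point worth isolating is the observation that applying $(1-\nabla)$ converts the sign condition $b_n \ge 0$ into monotonicity of the auxiliary sequence $c$; once this is seen, boundedness gives convergence of $c$ for free, and Lemma~\ref{Lemma 4 Extra 1} closes the argument at once.
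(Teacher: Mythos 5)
Your proof is correct and follows essentially the same route as the paper's: both set $c := q(\nabla)a$, use the factorization to write $b_n = c_n - c_{n+1} \ge 0$, deduce that $c$ is monotone and bounded hence convergent, and then invoke Lemma~\ref{Lemma 4 Extra 1} to conclude $a_n \to c_\infty/q(1)$. No gaps; nothing further to add.
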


\begin{proof}
Put $c:= q(\nabla)a$. Then $b = q(\nabla)a - \nabla q(\nabla)a$,
so $b_n = c_n - c_{n+1} \geq 0$. Hence $\{ c_n \}_{n=0}^{\infty}$
is a decreasing sequence. Since $\norm{c}_\infty \leq (\abs{q_0} +
\cdots + \abs{q_s}) \norm{a}_\infty$, the sequence $c$ is bounded.
Therefore $\{ c_n \}_{n=0}^{\infty}$ converges and by the previous
lemma we obtain that $a_n \to c_\infty / q(1)$.
\end{proof}

\begin{proof}[Proof of Theorem \ref{Theorem Extra 1}]
Let $h \in H$. Since $T \in \mathcal{C}_\alpha$, we obtain that $\sum_{n=0}^{\infty} \alpha_n \norm{T^n h}^2 \geq 0$. Changing $h$
by $T^j h$ for $j \geq 1$ we get that $\sum_{n=0}^{\infty}
\alpha_n \norm{T^{n+j} h}^2 \geq 0$. Hence, if we define the
sequence $a$ by $a_n = \norm{T^n h}^2$ then we have that $b :=
\alpha(\nabla)a$ satisfies $b_n \geq 0$ for every $n \geq 0$.

By Lemma \ref{Lemma 1 Extra 1} we have $b = \alpha(\nabla) a =
(1-\nabla) \tilde{\alpha}(\nabla) a$. Since $\tilde{\alpha}$ does
not vanish on $\T$, we can split it as
\[
\tilde{\alpha} = q \tilde{\beta},
\]
where $q$ is a polynomial with roots in $\D$ and $\tilde{\beta}
\in A_W$ does not vanish on $\overline{\D}$.
Therefore, if we put $Q(t) = (1-t)q(t)$ and $c :=
\tilde{\beta}(\nabla)a$, then $b = Q(\nabla) c$. By Lemma
\ref{Lemma 5 Extra 1} we know that there exists $\lim c_n$, and
since $a = (1/\tilde{\beta})(\nabla)c$ and $(1/\tilde{\beta}) \in
A_W$, the statement follows by Corollary
\ref{Corollary 1 Extra 1}.
\end{proof}


\begin{cor}[of Theorem~\ref{Theorem Extra 1}]
Assume the hypotheses of Theorem \ref{thm new norm}, in particular,
that $T\in\cC_\al$ and that $f$, $B$ are defined as in this theorem.
If $\al$ is strongly admissible, then the norm defined in \eqref{eq. new norm} can
be alternatively expressed by
\[
\Norm{h}^2 = \sum_{n=0}^{\infty} \norm{DT^n h}^2  + \lim_{n \to \infty} \norm{T^nh}^2.
\]
\end{cor}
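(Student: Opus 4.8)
The plan is to show that, under the strongly admissible hypothesis, the second summand of the norm \eqref{eq. new norm} reduces to an ordinary limit. Recall from Remark~\ref{remark on lim*} that for every $h\in H$,
\[
\norm{BT^nh}^2 = \sum_{k=0}^{\infty} f_k \norm{T^{n+k}h}^2,
\]
so that $\lim_{n\to\infty}\norm{BT^nh}^2 = \tn{lim}^*_{n\to\infty}\norm{T^nh}^2$, where $\tn{lim}^*$ is the regularized limit introduced in \eqref{lim-st}. Hence it suffices to prove that, when $\al$ is strongly admissible, this regularized limit coincides with the usual limit of $\norm{T^nh}^2$.

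First I would invoke Theorem~\ref{Theorem Extra 1}: since $\al$ is strongly admissible and $T\in\cC_\al$, the limit $x_\infty:=\lim_{n\to\infty}\norm{T^nh}^2$ exists for each $h\in H$. Put $x_n:=\norm{T^nh}^2$; as $T$ is power bounded the sequence $x=\{x_n\}$ lies in $\ell^\infty$, and it is convergent by the previous sentence. Next I would apply Lemma~\ref{Lemma 2 Extra 1}(i) (equivalently Corollary~\ref{Corollary 1 Extra 1}) with $f\in A_W$: since $x_n\to x_\infty$, the sequence $f(\nabla)x$, whose $n$-th entry is exactly $\sum_{k=0}^{\infty} f_k x_{n+k}=\norm{BT^nh}^2$, converges to $f(1)x_\infty$. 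Using the normalization $f(1)=\sum_k f_k=\norm{f}_{A_W}=1$ adopted in Theorem~\ref{thm new norm}, this gives
\[
\lim_{n\to\infty}\norm{BT^nh}^2 = f(1)\,x_\infty = x_\infty = \lim_{n\to\infty}\norm{T^nh}^2.
\]
Substituting this identity into the definition \eqref{eq. new norm} of $\Norm{h}^2$ yields the asserted formula.

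I expect no genuinely hard step: the corollary is a direct bookkeeping consequence of Theorem~\ref{Theorem Extra 1} combined with the elementary continuity statement of Lemma~\ref{Lemma 2 Extra 1}, namely that convolution against $f$ sends a convergent sequence to a convergent one with limit multiplied by $f(1)$. The only point deserving attention is the normalization $\sum_k f_k=1$, which is exactly what makes $\tn{lim}^*$ collapse to the ordinary limit on convergent sequences; without it one would merely pick up the harmless constant factor $f(1)$.
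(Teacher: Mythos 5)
Your proof is correct and follows essentially the same route as the paper: invoke Theorem~\ref{Theorem Extra 1} for the existence of $\lim_n\norm{T^nh}^2$ and then observe, via formula~\eqref{eq-lim-st} and the normalization $\sum_k f_k=1$, that $\tn{lim}^*$ collapses to the ordinary limit on convergent sequences. You merely make explicit (through Lemma~\ref{Lemma 2 Extra 1}(i)) the elementary convergence fact that the paper's Remark~\ref{remark on lim*} states without proof.
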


This follows from Theorem~\ref{Theorem Extra 1} and formula~\eqref{eq-lim-st}.


\comm{
\vspace{0.4cm}
\hrule
\vspace{0.4cm}

\section*{Multiple root $(1-t)^m$}
\label{sec:Mult-root-at-one}
\marginp{!!! QUITAR ESTA SECCI\'ON}

\begin{notas}
We follow the exposition of \cite[Theorem 3.5]{Mul88}.
\end{notas}

Let $T \in L(H)$ be a power bounded operator, $\wt{\al} \in A_W$ and let $m \geq 2$ be an integer. Put
\[
\al(t) = (1-t)^m \wt{\al}(t), \quad \be(t) = (1-t)^{m-1} \wt{\al}(t), \quad \ga(t) = (1-t)^{m-2} \wt{\al}(t).
\]
Fix $h \in H, \norm{h} = 1$. For every integer $s \geq 0$, put
\[
A_s := \sum_{n=0}^{\infty} \al_n\norm{T^{n+s} h}^2 \geq 0, \quad  B_s := \sum_{n=0}^{\infty} \be_n \norm{T^{n+s} h}^2 \geq 0, \quad C_s := \sum_{n=0}^{\infty} \ga_n \norm{T^{n+s} h}^2 \geq 0.
\]
Since $\al_0 = \be_0$ and $\al_n = \be_n - \be_{n-1}$ for every $n \geq 1$, it is immediate that $A_s = B_s - B_{s+1}$. Analogously, $B_s = C_s - C_{s+1}$.

\begin{thms}
Using the above notations, if $\al[T^*,T] \geq 0$, then $\be[T^*,T] \geq 0$.
\end{thms}

\begin{proof}
Suppose that $\al[T^*,T] \geq 0$. This means that $A_s \geq 0$ for every $s$. We want to prove that $B_s \geq 0$ for every $s$. Since $A_s = B_s - B_{s+1} \geq 0$, we obtain that $\{ B_s \}_{s \geq 0}$ is a decreasing sequence of real numbers. Moreover, for every positive integer $N$ we have
\[
\Abs{\sum_{n=0}^{N} B_s} = \Abs{\sum_{n=0}^{N} (C_s - C_{s+1}) } = \abs{C_0 - C_{N+1}} \leq 2 \norm{\ga}_{A_W} \sup_n \norm{T^n}^2 = C < \infty.
\]
Since the constant $C$ does not depend on $N$, it follows that $\Abs{\sum_{n=0}^{\infty} B_s} < \infty$. Since $\{ B_s \}_{s \geq 0}$ is a decreasing sequence we deduce that $B_s \geq 0$.
\end{proof}

\section*{Multiple root $t^n$}
\label{sec:Mult-root-at-zero}

\begin{defis}
Let $T \in L(H)$ and let $n \geq 1$ be an integer. We say that $T$ is a \emph{$n$-quasicontraction} if it is a $T^{*n}T^n$-contraction; i.e.,
\[
T^{*(n+1)} T^{n+1} \leq T^{*n} T^n.
\]
\end{defis}

In \cite[Theorem 4.1]{CS08} the following result is proved.

\begin{thms}
An $n$-quasicontraction is similar to a contraction, for any $n \geq 1$.
\end{thms}

Now we prove the following result.

\begin{thms}
Let $\al(t) := t^n (1-t) \wt{\al}(t)$, where $n \geq 1$ is an integer and $\wt{\al} \in A_W$ such that $\wt{\al}$ is positive on $[0,1]$. If $T \in \mathcal{C}_\al$, then $T$ is similar to a contraction.
\end{thms}

\begin{proof}
By Lemma \ref{L2 B 27 GL}, there exists a function $\wt{\be} \in A_W$ such that $\wt{\be} \succ 0$ and $f := \wt{\be} \wt{\al} \succ 0$. Put $A := f[T^*,T] \geq f_0 I > 0$ and $\wt{T} := A^{1/2} T A^{-1/2}$ (where the positive square root is taken). Notice that $\wt{T}$ is similar to $T$. Now we will prove that $\wt{T}$ is a $n$-quasicontraction and the statement will follow from the previous theorem.

Using that $t^n(1-t)f = \wt{\be} \al$ and Lemma \ref{L3 B 27 GL} (ii), we have
\[
T^{*n} A T^n - T^{*(n+1)} A T^{n+1} = \sum_{n=0}^{\infty} \wt{\be}_n T^{*n} \al[T^*,T] T^n \geq 0.
\]
Hence
\[
A^{-1/2} T^{*(n+1)} A^{1/2} A^{1/2} T^{n+1} A^{-1/2} \leq A^{-1/2} T^{*n} A^{1/2} A^{1/2} T^n A^{-1/2},
\]
which means that $\wt{T}$ is a $n$-quasicontraction.
\end{proof}
}

%
%
%
%


%
%

\bibliographystyle{abbrv}

\bibliography{biblio_Memoria_Dm}

\end{document}